\numberwithin{equation}{section}
\newtheorem{theorem}{Theorem}[section]
\newtheorem{remark}[theorem]{Remark}
\newtheorem{lemma}[theorem]{Lemma}
\newenvironment{proof}[1][Proof]{\textbf{#1} }{\ \rule{0.0em}{0.0em}}
\begin{document}

%   =================================================  Titre de l'article      ==================================
\title{  Quasilinear problems involving \\ a perturbation  with quadratic
growth in the gradient \\ and a noncoercive zeroth order term
%\\ version relue les 28-30 septembre 2015
}
\date{}
\maketitle \vspace{-1.5cm}
\begin{center}\author{Boussad Hamour \footnote{D\'{e}partement de Math\'{e}matiques, Ecole Normale
Sup\'erieure, Bo\^ite postale 92, Vieux Kouba, 16050 Alger,\break
Alg\'{e}rie; 
%e-mail: hamour\_b2000@yahoo.fr} 
e-mail: hamour@ens-kouba.dz}
\quad\&\quad
Fran\c{c}ois Murat  \footnote{Laboratoire Jacques-Louis Lions,
Universit\'e Pierre et Marie Curie (Paris VI) et CNRS, 
Bo\^{\i}te courrier 187,\break 
75252 Paris Cedex 05,
 France; e-mail: murat@ann.jussieu.fr}}
\end{center}

\bigskip

%===========================================================   Abstract  ====================================================
\begin{abstract}
In this paper we consider the problem
\begin{equation*}
\left\{\begin{array}{ll} u\in H_{0}^{1}(\Omega),
\\ &
\\
-\textrm{div}\,(A(x)Du)=H(x,u,Du)+f(x)+a_{0}(x)\, u& \textrm{in}
\;\;\;\mathcal{D}'(\Omega),
\end{array}
\right.
\end{equation*}
where $\Omega$ is an open bounded set of $\mathbb{R}^{N}$, $N \geq
3$, $A(x)$ is a coercive matrix with coefficients in
$L^\infty(\Omega)$,  $H(x,s,\xi)$ is a Carath\'eodory function which
satisfies for some $\gamma >0$
$$
 -c_{0}\, A(x)\, \xi\xi\leq H(x,s,\xi)\,{\rm sign}(s)\leq \gamma\,A(x)\,\xi\xi \;\;\;
{\rm a.e. }\; x\in \Omega,\;\;\;\forall s\in\mathbb{R},\;\;\;
 \forall\xi \in \mathbb{R}^{N},
$$
 $f$ belongs to $L^{N/2}(\Omega)$  and  $a_{0} \geq 0$ to $L^{q}(\Omega )$, $ q>N/2 $.
For $f$  and   $a_{0}$  sufficiently small, we prove
the existence   of at least one solution $u$ of this problem which
is moreover such that $e^{\delta_0 |u|} -1$ belongs to $H_{0}^{1}(\Omega)$ for
some $\delta_0\geq\gamma$ and satisfies an a priori estimate.
\end{abstract}
\renewcommand{\abstractname}{R\'esum\'e}
\begin{abstract}
Dans cet article nous \'{e}tudions le probl\`{e}me
\begin{equation*}
\left\{\begin{array}{ll} u\in H_{0}^{1}(\Omega),
\\ &
\\
-\textrm{div}\,(A(x)Du)=H(x,u,Du)+f(x)+a_{0}(x)\, u& \textrm{dans
}\;\;\; \mathcal{D}'(\Omega),
\end{array}
\right.
\end{equation*}
o\`{u} $\Omega$ est un ouvert born\'{e} de $
\mathbb{R}^{N}$, $N\geq 3$, $A(x)$ est
 une matrice coercive \`{a} coefficients  $L^\infty(\Omega)$,
 $ H(x,s,\xi)$
 est une fonction de Carath\'edory qui satisfait pour un certain $\gamma >0$
\begin{equation*}
 -c_{0}\, A(x)\, \xi\xi\leq H(x,s,\xi)\,{\rm sign}(s)\leq \gamma\,A(x)\,\xi\xi \;\;\;
{\rm p.p. }\; x\in \Omega,\;\;\;\forall s\in\mathbb{R},\;\;\;
 \forall\xi \in \mathbb{R}^{N},
\end{equation*}
$f$ appartient \`a $L^{N/2}(\Omega)$ et $a_{0} \geq 0$ \`a $L^{q}(\Omega )$, $ q>N/2 $.
Pour  $f$ et $a_{0}$ suffisamment petits,
nous d\'{e}montrons qu'il existe au moins une solution $u$ de ce
probl\`{e}me qui est de plus telle que
$e^{\delta_0 |u|} -1$ appartient \`a $H_{0}^{1}(\Omega)$ pour un certain
$\delta_0\geq\gamma$ et satisfait une estimation a priori.
\end{abstract}

\vfill 
\eject

% =============================================  Introduction   ===========================================

\section{Introduction}

In this paper, we consider the quasilinear problem
\begin{equation}
\left\{\begin{array}{ll}\label{02.1} u\in H_{0}^{1}(\Omega),
\\ 
\\
-\textrm{div}\,(A(x)Du)=H(x,u,Du)+f(x)+a_{0}(x)\, u 
\;\;\; \textrm{in}\;\;\;\;\mathcal{D}'(\Omega),
\end{array}
\right.
\end{equation}
where $\Omega$ is a bounded open set of $ \mathbb{R^{N}}$, $N \geq
3$, where $A$ is a coercive matrix with bounded measurable
coefficients,  where  $H(x,s,\xi)$ is a Carath\'{e}odory function
wich  has quadratic growth in $\xi$, and more precisely  which
satisfies for some $\gamma >0$ and $c_0\geq 0$
\begin{equation}\label{0H}
 -c_{0}\, A(x)\, \xi\xi\leq H(x,s,\xi)\,{\rm sign}(s)\leq \gamma\,A(x)\,\xi\xi ,\;\;\;
{\rm a.e. }\; x\in \Omega,\;\;\;\forall s\in\mathbb{R},\;\;\;
 \forall\xi \in \mathbb{R}^{N},
\end{equation}
 where $f\in L^{N/2}(\Omega)$, $f \neq 0$,
 and where $a_{0}\in L^{q}(\Omega)$, $q>\frac{N}{2}$,  with
\begin{equation}\label{01.3}
a_{0}\geq 0, \quad a_0\neq 0.
\end{equation}

\par When $ f$ and  $a_0$ are sufficiently small (and more precisely when $f$ and $a_0$  satisfy 
the two smallness conditions (\ref{A1}) and  (\ref{A3}), we prove in the present
paper that  problem (\ref{02.1}) has a least one solution, which is
moreover such that

 \begin{equation}\label{01.4}
{e^{\delta_0|u|}-1}\in H_0^1(\Omega) ,
\end{equation}
with
\begin{equation}\label{01.55}
\left\|\frac{e^{\delta_0|u|}-1}{\delta_0}\right\|_{H^{1}_{0}(\Omega)}\leq
Z_{\delta_{0}},
 \end{equation}
where $\delta_0\geq\gamma$  and $Z_{\delta_0}$ are two constants
which depend only on the data of the problem (see (\ref{3777bis}),
(\ref{Phi delta0 bis}), (\ref{6.15bis})  for the definitions of
$\delta_0$ and $Z_{\delta_0}$).
\par  The main originality of our result is the fact that  we assume that $a_0$ satisfies (\ref{01.3}), namely that $a_0$ is a
nonnegative function. 
\vspace{0.8cm}
\par Let us begin with some review of the literature.
\par Problem  (\ref{02.1}) has been studied in many papers in the case where  $a_0\leq 0$.  Among these papers is  a series of papers \cite{bmp3},  \cite{bmp4},  \cite{bmp2}
and   \cite{bmp6} by L.~Boccardo, F.~Murat and J.-P.~Puel (see also
the paper \cite{rakotoson} by J.-M.~Rokotoson),
 which are concerned with the case where
\begin{equation}\label{01.5}
a_{0}(x)\leq -\alpha_ 0<0.
\end{equation}
In  these papers  (which also consider nonlinear monotone
operators and not only the linear operator 
 $-{\rm div}\,(A(x)Du)$,  the authors prove that  when $a_0$
satisfies  (\ref{01.5}) and when $f$ belongs to $L^q(\Omega)$, $
q>\frac{N}{2}$,  then there exists at least one solution of
(\ref{02.1}) which moreover belongs to $L^\infty(\Omega)$ and which
satisfies some a priori estimates. The uniqueness of such a solution
has been proved, under some further structure  assumptions, by G.
Barles and F.~Murat  in \cite{barles},  by G. Barles, A.-P.~Blanc,
C.~ Georgelin and M. Kobylanski in \cite{barles 2} and by
 G. Barles  and A. Porretta in \cite{barles porretta}.

 \vfill
 \eject

\par The case where
\begin{equation}\label{01.31}
a_{0}= 0
\end{equation}
was considered,  among others,  by A. Alvino, P.-L. Lions and G.
Trombetti in \cite{alvino}, by  C. Maderna, C.~Pagani and S. Salsa
in \cite{salsa}, by V. Ferone and M.-R.  Posteraro in
\cite{feroneposteraro1}, and   by N. Grenon-Isselkou and  J. Mossino
in \cite{grenon}. In these papers (which also consider nonlinear
monotone operators), the authors prove that when $a_0$  satisfies
(\ref{01.31}) and when $f$ belongs to $L^q(\Omega)$,
$q>\frac{N}{2}$,   with  $ \|f\|_{L^q (\Omega )}$  sufficiently
small, then there exists at least one solution of (\ref{02.1}) which
moreover belongs to $L^\infty(\Omega)$ and which satisfies some a
priori estimates.
\par The case where $a_0$ satisfies  (\ref{01.31})  but where $f$ only belongs to $L^{N/2}(\Omega)$ for $N\geq 3$
(and no more to $L^q(\Omega)$ with  $q>\frac{N}{2}$)  was considered
by V. Ferone and F.~Murat in \cite{fm} (and  in  \cite{fm1} in the
nonlinear monotone case). These authors  proved that when $
\|f\|_{L^{N/2} (\Omega )}$ is sufficiently  small, there exists at
least one solution of (\ref{02.1}) which is moreover such that
$e^{\delta| u|}-1\in H_0^1(\Omega)$  for some $\delta>\gamma$,
and that such a solution satisfies 
an a priori estimate. Similar results were
obtained in the case where $f\in L^{N/2}(\Omega)$ by A.~Dall'Aglio,
D.~Giachetti and J.-P.~Puel in \cite{agp} for possibly unbounded domains when $a_0$ satisfies (\ref{01.5});
in this case no smallness condition is required on $f$.
Finally, in~\cite{fm2}, V.~Ferone and F.~Murat  considered (also in the case of nonlinear monotone operators) the case where $a_0$ satisfies $a_0\leq 0$ and where $f$ belongs to the Lorentz space $L^{N/2, \infty} (\Omega)$; in this case two smallness conditions should be fulfilled.
%\par To finish with the case where $a_0$ satisfies (\ref{01.5}) or (\ref{01.31}), or  more generally satisfies $a_0\leq 0$, let us quote the paper \cite{porretta} by A.~Porretta.  Let us  also  mention  the case where the nonlinearity $H(x,s,\xi)$ has the ``good sign %property'', namely satisfies
\par To finish with the case where $a_0$
satisfies $a_0\leq 0$, let us quote the paper \cite{porretta} by
A.~Porretta, where the author studies the asymptotic behaviour of
the solution $ u$  of (\ref{02.1}) when $a_0$ is a strictly positive
constant which tends to zero,  and proves that an ergodic constant
appears at the limit $a_0 = 0$. Let us  also  mention  the case
where the nonlinearity $H(x,s,\xi)$ has the ``good sign property'',
namely satisfies
\begin{equation}\label{0H1}
- H(x,s,\xi)\,{\rm{sign}}\,(s)\geq 0.
\end{equation}
In this case,  when $a_0\leq 0$ and when $f$  belongs to
$H^{-1}(\Omega)$, L.~Boccardo, F.~Murat and\ 
J.-P.~Puel in~\cite{bmp1} and A.~Bensoussan, L.~Boccardo and F.~Murat in~\cite{bens} proved the existence of at least one solution of
(\ref{02.1}) which belongs to $H_0^1(\Omega)$. 
\vspace{0.8cm}
\par In contrast with the cases (\ref{01.5}) and  (\ref{01.31}),  the present paper is concerned with the case (\ref{01.3}) where $a_0\geq 0$ 
and $a_0 \neq 0$.
\par In this setting we are only aware of four papers, which are recent.  In   \cite{jeanjean},   L.~Jeanjean and B.~Sirakov  proved the existence
of at least two solutions of (\ref{02.1})  (which moreover  belong
to $L^\infty(\Omega)$)  when  $A(x)=Id$,
 $ H(x,s,\xi)=\mu |\xi|^2$, $\mu>0$, $f\in L^q(\Omega)$, $q>\frac{N}{2}$,   $f\geq 0$,
 and    $a_0\in L^q(\Omega)$,  $a_0\geq 0$,  $a_0 \neq 0$, with  $\|f\|_{ L^q(\Omega)}$ and $\|a_0\|_{ L^q(\Omega)}$
 sufficiently small. In \cite{arcoya},
 D.~Arcoya, C.~De Coster, L.~Jeanjean and K.~Tanaka  proved the existence of a continuum $(u,\lambda)$ of solutions  (with $u$ which moreover  belongs to $L^\infty(\Omega)$)
 when $A(x)=Id$,  $H(x,s,\xi)=\mu (x)\,|\xi|^2$, with   $\mu \in L^\infty(\Omega)$ ,    $\mu (x)\geq \mu>0$,
    $f\in L^q(\Omega)$, $q>\frac{N}{2}$,
  $f\geq 0$,  $f\neq 0$  and $a_{0}(x)=\lambda a_0^\star (x)$  with  $a_0^\star \in L^q(\Omega)$, $a_0^\star\geq 0$
  and $a_0^\star\neq0$; moreover, under some  further conditions on $f$,
   these authors proved that this continuum is defined for $\lambda\in ]-\infty\,,\,\lambda_0]$
   with $\lambda_0>0$, 
    and that there are at least two nonnegative solutions of (\ref{02.1}) when $\lambda >0$ is sufficiently small.
In \cite{souplet},  
in a similar setting, assuming only that $\mu (x) \geq 0$ but that the supports of $\mu$ and of $a_0^\star$ have a nonempty intersection and that $N\leq 5$, 
P.~Souplet proved the existence of a continuum $(u,\lambda)$ of solutions, 
and that there are at least two nonnegative solutions of (\ref{02.1}) when $\lambda > 0$ is sufficiently small.   
 In \cite{jeanjean ramos}, L. Jeanjean
 and H. Ramos Quoirin proved the existence of two positive solutions
 (which moreover belong to $L^\infty (\Omega)$)  when $A(x)=Id$, $H(x,s,\xi)=\mu
 |\xi|^2$, $\mu >0$, $f\in L^q(\Omega)$, $q>\frac{N}{2}$, $f\geq 0$, $f\neq
 0$, and $a_0\in C(\overline{\Omega})$ which  can change sign with $a_0^+\neq
 0$ and which satisfies the so called ``thick zero set condition'',
  when  the first eigenvalue of the operator $-\Delta -(a_0+\mu f)$ in $H_0^1(\Omega)$
 is positive.
\vspace{0.8cm}
\par With respect to the results obtained in the four latest papers, we prove in the present paper,
 as said above, the existence of (only) one solution of (\ref{02.1}) in the case (\ref{01.3}) ($a_0\geq 0$) when $a_0$ and $f$ satisfy
the two smallness conditions  (\ref{A1}) and  (\ref{A3}),  but our
result is obtained in the general case of a nonlinearity
$H(x,s,\xi)$ which satisfies only (\ref{0H}),  with $f\in
L^{N/2}(\Omega)$ and  with $a_0\in L^q(\Omega)$, $q> \frac{N}{2}$.
Moreover, the method which allows us to prove this result continues
{\it mutatis mutandis}   to work in the nonlinear monotone case
where the linear operator $-{\rm div}\,(A(x)Du)$ is replaced by a
Leray-Lions operator $-{\rm div}\,(a(x,u, Du))$  working in
$W_0^{1,p}(\Omega)$, for some $1<p<N$,  and where the quasilinear
term
 $H(x,u,Du)$ has $p$-growth in $|Du|$.
\vspace{0.8cm}
\par Let us now describe the contents of the present paper.
\par  The precise statement of our result  is given in Section 2   (Theorem \ref{theo}), as well as
the precise assumptions under which we are able to prove it. These
conditions in particular include the two smallness conditions (\ref{A1}) and (\ref{A3}).
\par Our method for proving Theorem \ref{theo} is based on an
equivalence result (see Theorem \ref{theoequiv})
   that we state in Section 3 once we have introduced the functions $K_\delta (x,s,\zeta)$  and $g_\delta(s)$ (see  (\ref{def k}) and
  (\ref{def g})) and made some technical remarks on them. This result is  very close to the equivalence result given in the paper \cite{fm1}  by V. Ferone and F. Murat.
 \par This equivalence result implies that in order to prove the existence of
 a solution $u$ of (\ref{02.1}) which  satisfies (\ref{01.4}) and
 (\ref{01.55}),  it is equivalent to prove  (see Theorem \ref{theo34}) the existence
of a function $w$ defined by (\ref{3.333}), i.e.
\begin{equation}\label{3.29}
w=\dfrac{1}{\delta_0}(e^{\delta_0 |u|}-1)\, {\rm{sign}}(u),
\end{equation}
which satisfies (\ref{CCbis}), i.e.
\begin{equation}\label{0CCbis}
\left\{\begin{array}{l} w\in H_{0}^{1}(\Omega),
\\
\\
-{\rm{div}}(A(x)Dw)+K_{\delta_{0}}(x,w,Dw)\,{\rm{sign}}(w)\,=
\\
\\
=(1+\delta_{0}|w|)\,f(x)+a_{0}(x)\, w+a_{0}(x)\, g_{\delta_{0}}(w)\,{\rm{sign}}(w)
\;\;\; \textrm{in}\;\;\;\;\mathcal{D}'(\Omega),
\end{array}\right.
\end{equation}
and the estimate  (\ref{CCter}), i.e.
\begin{equation}\label{1.13 bis}
\|w\|_{H_{0}^{1}(\Omega)}\leq Z_{\delta_0},
\end{equation}
(which is nothing but (\ref{01.55})).
\par Our goal thus becomes   to prove  Theorem  \ref{theo34}, namely to prove the existence of a solution $w$ which satisfies
 (\ref{0CCbis}) and (\ref{1.13 bis}).
 \par Problem (\ref{0CCbis}) is very similar to problem (\ref{02.1}), since it involves a term $-K_{\delta_0}(x, w, Dw)\,{\rm sign}(w)$  which has quadratic growth in  $Dw$, as well as a zeroth order term
$\delta_0\,|w| f(x) +\break
+ \, a_{0}(x)\, w+a_{0}(x)\, g_{\delta_0}(w)\, {\rm{sign}}(w) $. But this problem is
also very different  from   (\ref{02.1}), since the term
$-K_{\delta_0}(x, w, Dw)\,{\rm sign} (w)$ with quadratic growth has
now the ``good sign property'' (see (\ref{0H1})), since
$K_{\delta_0}(x, s, \xi)$ satisfies
$$K_{\delta_0}(x, s, \xi) \geq 0,$$
while the zeroth order term is now   no more a linear but   a
semilinear term   with $|s|^{1+\theta}$  growth (see (\ref{6.16}))
due to presence of the term  $a_{0}(x)\, g_{\delta_0}(w)\,{\rm{sign }}(w)$.
\par  We will prove Theorem  \ref{theo34} essentially by applying
Schauder's fixed point theorem. But there  are    some difficulties
to do it directly, since the term with quadratic growth
$K_{\delta_0}(x, w, Dw)\,{\rm sign}(w)$   only belongs to
$L^1(\Omega)$ in general.   We therefore begin by defining  an
approximate problem  (see  (\ref{care}))  where $K_{\delta}(x, w,
Dw)$ is remplaced by its truncation at height $k$, namely
$T_k(K_{\delta}(x, w, Dw))$, and we prove (see Theorem \ref{care})
that   if  $f$   and $a_0$  satisfy the two smallness conditions
(\ref{A1}) and  (\ref{A3}), this approximate problem has at least
one solution $w_k$ which satisfies the a priori estimate

\begin{equation}\label{01.15}
\|w_k\|_{H_{0}^{1}(\Omega)}\leq Z_{\delta_0}.
\end{equation}
 This result,  which  is proved in Section 4, is    obtained by applying Schauder's fixed point theorem in a classical way.
\par We then pass to the limit as $k$ tends to infinity and we prove in Section 5  that (for a subsequence of $k$) $w_k$
tend to some $w^\star$ strongly in $H_0^1(\Omega)$ 
(see (\ref{5.1bis}))  
and that this $w^\star$   is a solution of
(\ref{0CCbis}) which also satisfies (\ref{1.13 bis}) (see End of the
proof of Theorem \ref{theo34}).
\par This completes the proof of  Theorem \ref{theo34}, and therefore proves Theorem \ref{theo}, as announced.
\vspace{0.8cm}
%\par This method of proof is similar to the method followed by V.~Ferone and F.~Murat in \cite{fm}  in the case where  $a_0=0$.  As mentionned  above, this method can be applied {\it mutatis mutandis} to the nonlinear case where the linear operator $-  {\rm div}(A(x)Du)$ is replaced by a Leray-Lions operator  $-  {\rm div}(a(x,u,Du))$ working in $W_0^1(\Omega)$ for some $1<p<N$, and where the quasilinear term $H(x,u,Du)$ has $ p$-growth in $|Du|$. This can  indeed be done by the method used by V.~Ferone and F.~Murat in \cite{fm1}.   This method does not consist in proving (as in the present paper, see   Theorem~ \ref{theo34}) the existence of a solution $w$ of a problem obtained by a change of unknow function which replaces the  original quasilinear problem (\ref{02.1}) by the
% tranformed quasilinear problem (\ref{0CCbis}), but consists in using  in the original problem (\ref{02.1}) for the  unknown  $u$  some  highly nonlinear test functions which mimic the nonlinear  test functions used in the transformed problem for the unknown function $w$  (see Remark 2.10 of \cite{fm1}).  This   will be the goal of our next paper \cite{hamour}.
\par This proof follows along the lines of the proof used  by V.~Ferone and F.~Murat in \cite{fm}  in the case where  $a_0=0$.  As mentionned  above, this method can be applied {\it mutatis mutandis} to the nonlinear case where the linear operator $-  {\rm div}(A(x)Du)$ is replaced by a Leray-Lions operator  $-  {\rm div}(a(x,u,Du))$ working in $W_0^1(\Omega)$ for some $1<p<N$, and where the quasilinear term $H(x,u,Du)$ has $ p$-growth in $|Du|$,  as it was done   in \cite{fm1}    by V.~Ferone and F.~Murat
in this nonlinear setting when  $a_0=0$.    This   will be the goal of
our next paper \cite{hamour}.

\section{Main result}
In this paper we consider the following quasilinear problem
\begin{equation}\label{2.1}
\left\{\begin{array}{ll} u\in H_{0}^{1}(\Omega),&
\\ &
\\
-\textrm{div}\,(A(x)Du)=H(x,u,Du)+a_{0}(x)\, u+f(x)& {\rm in}\;\;\;
\mathcal{D}'(\Omega),
\end{array}
\right.
\end{equation}
where the set $\Omega$ satisfies (note that no regularity is assumed
on the boundary of $\Omega$)
\begin{equation}\label{2.2}
\Omega \;\;\; \textrm{ is a bounded open subset of }
\mathbb{R}^{N},\; N \geq 3,\qquad\qquad\qquad\qquad\qquad
\end{equation}
 ${\textrm{where the matrix \,}}A {\textrm { is a coercive matrix with bounded measurable coefficients, i.e. }}$
\begin{equation}
\left\{\begin{array}{l}\label{2.3}
 A\in(L^{\infty}(\Omega))^{N\times N},
\\
\\
\exists \alpha >0,\;\;\; A(x)\,\xi\xi\geq \alpha |\xi|^{2}\;\;\;
\textrm{a.e.}\; x\in\Omega,\;\;\;\forall \xi \in \mathbb{R}^{N},
\end{array}
\right.
\end{equation}
where the function $H(x,s,\xi)$ is a Carath\'{e}odory function with
quadratic growth in $\xi$, and more precisely satisfies
\begin{equation}\label{2.6}
\left\{\begin{array}{ll} H:\Omega \times
\mathbb{R}\times\mathbb{R^{N}}\rightarrow \mathbb{R}
\textrm{ is a Carath\'{e}odory function such that}&\\
&\\
-c_{0}\, A(x)\, \xi\xi\leq H(x,s,\xi)\,\textrm{sign(\textit{s})}\leq
\gamma\,A(x)\,\xi\xi , \;\;\;\textrm{a.e.}\; x\in\Omega,\;\;\;
 \forall s\in\mathbb{R},\;\;\;\forall\xi \in\mathbb{R}^{N},&\\
&\\
\textrm{where } \gamma> 0 \textrm{ and }c_{0}\geq0,&\\
\end{array}
\right.
\end{equation}
where \;$\rm{sign}:\mathbb{R}\rightarrow\mathbb{R}$ denotes the
function defined by
\begin{equation}\label{def sign}
{\textrm{sign}}(s)=\left\{
\begin{array}{cc}
+1&\;\;\; {\rm if }\;\;\;s>0, \\
0&\;\;\; {\rm if }\;\;\;s=0, \\
-1&\;\;\; {\rm if }\;\;\;s<0,
\end{array}
\right.
\end{equation}
where the coefficient $a_0$ satisfies
\begin{equation}\label{2.5}
a_{0}\in L^{q}(\Omega) \;\;\;{\rm for
\;some}\;\;\;q>\frac{N}{2},\;\;\; a_{0}\geq 0,\;\;\; a_{0}\neq0,
\end{equation}
as well as  the technical assumption 
(note that, since
$\dfrac{N}{2}<\dfrac{2N}{6-N}$ when $3\leq N\leq 6$
and since $\Omega$ is bounded,
this assumption can be made without loss of
generality once hypothesis (\ref{2.5}) is assumed)
\begin{equation}\label{2.5bis}
\dfrac{N}{2}<q<\dfrac{2N}{6-N}\quad {\rm{when}}\quad 3\leq N\leq 6,
\end{equation}
and finally where
\begin{equation}\label{2.4}
f\in L^{N/2}(\Omega),\quad f\neq0.
\end{equation}
%
%(note that $L^{N/2}(\Omega)\subset H^{-1}(\Omega)$ since
%$N\geq 3$ and since $\Omega$ is bounded; note also that if $f=0$,
%then $u=0$ is a solution of (\ref{2.1}) so that the results of the
%present paper become trivial).
\bigskip
\par Since $N \geq 3$, let $2^{*}$ be the Sobolev's exponent defined by
$$\frac{1}{2^{*}}=\frac{1}{2}-\frac{1}{N},$$
 and let $C_{N}$ be the Sobolev's constant defined as the best constant such that
\begin{equation}\label{2.7'}
\|\varphi\|_{2^{*}} \leq C_{N}\|D\varphi\|_{2},\;\;\;
\forall\varphi\in H_{0}^{1}(\Omega).
\end{equation}
\par We claim that in view of (\ref{2.5}) and (\ref{2.5bis}), one has
\begin{equation}\label{2.10}
 0<\frac{2^{*}}{q'}-2<1,
\end{equation}
where $q'$ the H\"{o}lder's conjugate of the exponent $q$, i.e.
 $$\frac{1}{q'}+\frac{1}{q}=1;$$
indeed easy computations show that
$$
0< \dfrac{2^{*}}{q'}-2\;\;\;\Longleftrightarrow
\;\;\;q>\dfrac{N}{2},
$$
$$
\dfrac{2^{*}}{q'}-2<1 \;\;\;\Longleftrightarrow
\;\;\;\dfrac{1}{q}>\dfrac{6-N}{2N},
$$
where the latest inequality is satisfied when $N > 6$ and is equivalent
to $q<\dfrac{2N}{6-N}$ when $N\leq 6$ (see (\ref{2.5bis})).
\\
 \indent We now define the number
$\theta$ by
\begin{equation}\label{b}
 \theta=\frac{2^{*}}{q'}-2.
\end{equation}
In view of (\ref{2.10}) we have
\begin{equation}\label{c}
 0<\theta <1.
\end{equation}
\bigskip
 \par  Since $\Omega$ is bounded, we equip the space
$H_{0}^{1}(\Omega)$ with the norm
\begin{equation}\label{2.7"}
\|u\|_{H_{0}^{1}(\Omega)}=\|Du\|_{L^{2}(\Omega)^N}.
\end{equation}
\bigskip
\par We finally assume that $f$ and   $a_0$   are sufficiently
small (see Remark \ref{rmq2}), and more precisely that
 \begin{equation}\label{A1}  %   (2.14)
 \alpha-C_{N}^{2}\|a_0\|_{N/2}-\gamma C_{N}^{2}\|f\|_{N/2}>0,
\end{equation}
%\begin{equation}\label{A2}
% L({\gamma}=\alpha-C_{N}^{2}\|a_0\|_{N/2}-\gamma C_{N}^{2}\|f\|_{N/2}>0,
%\end{equation}
\begin{equation}\label{A3} %   (2.15)
 \|f\|_{H^{-1}(\Omega)}\leq\dfrac{\theta}{1+\theta}\dfrac{(\alpha-C_{N}^{2}\|a_0\|_{N/2}-\gamma C_{N}^{2}\|f\|_{N/2})^{(1+\theta) /\theta}}
 {((1+\theta)GC_{N}^{2+\theta}\|a_0\|_{q})^{1/\theta}},
\end{equation}
where the constant $G$ is defined by (\ref{G bis}).
%In  Remark
%\ref{rmq3.9}  we will try to explain  the reasons for which the two
%smallness assumptions (\ref{A1}) and (\ref{A3}) are made.
%
\par Observe
that in place of (\ref{A1})  we could as well have assumed that
\begin{equation*}\label{2.14bis}
 \alpha-C_{N}^{2}\|a_0\|_{N/2}-\gamma C_{N}^{2}\|f\|_{N/2}\geq 0,
\end{equation*}
but that when equality takes places in the latest
inequality, inequality  (\ref{A3}) implies that $f=0$,  and
  then $u=0$ is a solution of (\ref{2.1}), so that the result of Theorem \ref{theo} becomes
  trivial.
\bigskip
\par Our main result is the following Theorem.
%\begin{equation}\label{2.21}
% \Phi_{\delta} (Z_{\delta})<0,\;\textrm{i.e.}\qquad GC^{2+\theta}_{N}\|a_0\|_q %Z_{\delta}^{1+\theta}-L({\delta}Z_{\delta}+\|f\|_{H^{-1}(\Omega)}<0.
 %\end{equation}
% \indent Our result is the following
%\appendix
%===============================================     Theoreme 2.1     ========================================
\begin{theorem}\label{theo}
Assume that \rm{(\ref{2.2}), (\ref{2.3}), (\ref{2.6}), (\ref{2.5}),
(\ref{2.5bis}) and (\ref{2.4})} \it{hold true}. Assume moreover that the two smallness conditions
{\rm{(\ref{A1})}} and {\rm{(\ref{A3})}} hold true.
 %and define $\delta_0$ by (\ref{Phi delta0}).
\\
\indent Then there exist a constant $\delta_0$  with
$\delta_0\geq\gamma$, and a constant $Z_{\delta_{0}}$, which are
defined in 
Lemma~{\ref{lemma 2}} (see {\rm (\ref{3777bis})},
{\rm (\ref{Phi delta0 bis})})  and {\rm (\ref{6.15bis})}), such that
there exists at least one solution $u$ of {\rm(\ref{2.1})} which
further satisfies
\begin{equation}\label{2.23'}
 (e^{\delta_0|u|}-1)\in H^{1}_{0}(\Omega),
\end{equation}
with
\begin{equation}\label{2.24}
 \|e^{\delta_0|u|}Du\|_{L^2(\Omega)^N}=\left\|\frac{e^{\delta_0|u|}-1}{\delta_0}\right\|_{H^{1}_{0}(\Omega)}\leq Z_{\delta_{0}}.
 \end{equation}
\end{theorem}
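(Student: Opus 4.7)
The plan is to follow the strategy already sketched in the Introduction, whose core idea is the exponential change of unknown $w=\frac{1}{\delta_0}(e^{\delta_0|u|}-1)\,\mathrm{sign}(u)$. By the equivalence result (Theorem~\ref{theoequiv}), producing a solution $u$ of (\ref{2.1}) with the exponential integrability (\ref{2.23'})--(\ref{2.24}) is the same as producing a $w\in H_0^1(\Omega)$ solving (\ref{0CCbis}) and satisfying the energy bound (\ref{1.13 bis}). The advantage of this reformulation is twofold: the nonlinearity in the gradient becomes $-K_{\delta_0}(x,w,Dw)\,\mathrm{sign}(w)$, which has the ``good sign'' $K_{\delta_0}\geq 0$; and the linear zeroth order term $a_0(x)u$ is replaced by $a_0(x)w+a_0(x)g_{\delta_0}(w)\,\mathrm{sign}(w)$, whose nonlinear part grows like $|w|^{1+\theta}$ with $0<\theta<1$ by (\ref{b})--(\ref{c}), and is therefore subcritical with respect to $H_0^1(\Omega)$ when $a_0\in L^q(\Omega)$.

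Since $K_{\delta_0}(x,w,Dw)$ only lies in $L^1(\Omega)$, I would not apply Schauder's theorem directly to (\ref{0CCbis}). Instead, the first step is an \textbf{approximation by truncation}: replace $K_{\delta_0}$ by $T_k(K_{\delta_0})$ to obtain a problem whose nonlinearities are bounded in $L^\infty(\Omega)$ for every fixed $k$. Freezing the $w$ inside the nonlinear terms in some $v$ defines a solution map $v\mapsto w$ on $H_0^1(\Omega)$ which is continuous and compact by Rellich's theorem. Using the coercivity of $A$, Sobolev's inequality (\ref{2.7'}), and the nonlinearity $g_{\delta_0}(s)\leq C|s|^{1+\theta}$, one gets an energy inequality of the form
$$(\alpha-C_N^2\|a_0\|_{N/2}-\gamma C_N^2\|f\|_{N/2})\|w\|_{H_0^1}\leq \|f\|_{H^{-1}}+C\|a_0\|_q\|w\|_{H_0^1}^{1+\theta},$$
where the two smallness conditions (\ref{A1})--(\ref{A3}) are tailored precisely so that this algebraic inequality has a positive solution $Z_{\delta_0}$ fixing an invariant ball for the map; this is the arithmetic content of Lemma~\ref{lemma 2}. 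Schauder's theorem thus provides, for each $k$, an approximate solution $w_k$ with $\|w_k\|_{H_0^1}\leq Z_{\delta_0}$.

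The second step is the \textbf{passage to the limit} as $k\to\infty$, along a subsequence with $w_k\rightharpoonup w^\star$ in $H_0^1(\Omega)$ and a.e. The non-trivial point is the identification of the weak limit of the truncated nonlinearity $T_k(K_{\delta_0}(x,w_k,Dw_k))$ with $K_{\delta_0}(x,w^\star,Dw^\star)$. Following \cite{fm}, the route is to upgrade the weak convergence to strong convergence $w_k\to w^\star$ in $H_0^1(\Omega)$ by testing the approximate equation against a well chosen exponential-type function of $w_k-w^\star$, whose derivative is large enough to absorb the quadratic term $K_{\delta_0}(x,w_k,Dw_k)|D(w_k-w^\star)|^2$; the ``good sign'' of $K_{\delta_0}$ is decisive here. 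Strong convergence and a.e.~convergence of the gradients (up to a subsequence) then pass to the limit in every term, yielding (\ref{0CCbis}), while the bound (\ref{1.13 bis}) follows by lower semicontinuity.

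The final step is purely formal: once $w^\star$ is obtained, the equivalence result (Theorem~\ref{theoequiv}) gives back a $u$ solving (\ref{2.1}) with $(e^{\delta_0|u|}-1)\,\mathrm{sign}(u)=\delta_0 w^\star$, and (\ref{2.24}) is just a rewriting of (\ref{1.13 bis}). The principal obstacle is unmistakably the second step: obtaining strong $H_0^1$ convergence of $w_k$ when only an $L^1$ bound on the quadratic term is available. All other difficulties (invariance of the ball, continuity of the truncated map, subcritical growth of $g_{\delta_0}$) are either bookkeeping or already taken care of by the delicate algebraic formulation of the smallness conditions (\ref{A1})--(\ref{A3}).
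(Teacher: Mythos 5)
Your proposal is correct and follows essentially the same route as the paper: the exponential change of unknown via Theorem~\ref{theoequiv} (reducing to problem (\ref{CCbis})--(\ref{CCter})), a truncated approximate problem solved by Schauder's fixed point theorem with the invariant ball $\{\|Dw\|_2\le Z_{\delta_0}\}$ furnished by Lemma~\ref{lemma 2}, and passage to the limit via the Bensoussan--Boccardo--Murat strong $H_0^1$-convergence device. The one small imprecision is that the exponential-type test function must be applied to differences of truncations, $\psi(T_n(w_k)-T_n(w^\star))$, and combined with a separate tail estimate on $G_n(w_k)$ (this is the two-lemma structure of Section~5), rather than directly to $w_k-w^\star$, which is not in $L^\infty(\Omega)$.
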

\par Our proof of Theorem \ref{theo} is based on an equivalence result (Theorem \ref{theoequiv}) which will be stated and proved in Section
3. This equivalence Theorem will allows us to replace proving  Theorem
\ref{theo} by proving Theorem \ref{theo34} which is
 equivalent to Theorem \ref{theo}.
%===============================================       Remarque    2.2    ========================================
\begin{remark}\label{rmq2}
{\rm In this Remark, we consider that the open set $\Omega$, the
matrix $A$ and the function $H$ are fixed (and therefore in
particular that the constants $\alpha>0$ and $\gamma>0$ are fixed),
and we consider the functions $a_0$ and $f$ as parameters.
\\
\par Our first set of assumptions on these parameters  (assumptions (\ref{2.5}) and (\ref{2.5bis})) is that $a_0$ belongs
 to $L^{q}(\Omega)$ with $q>\frac{N}{2}$ (and that $q<\frac{2N}{N-6}$ when $3 \leq N \leq 6$; as said above this assumption can be made without loss of generality). 
 This first set of assumptions is essential to ensure (see (\ref{2.10}))
  that the exponent $\theta$ defined by (\ref{b}) satisfies $0<\theta<1$ (see
  (\ref{c})). We also assume $a_0\geq 0$ and $a_0\neq 0$.
\\
\par Our second set of assumptions on these parameters is made of the two smallness conditions (\ref{A1}) and (\ref{A3}).
\par Indeed, if, for example, $a_0$ is sufficiently small such that it satisfies
  $$\alpha-C_{N}^{2}\|a_0\|_{N/2}>0,$$
 then the two smallness conditions (\ref{A1}) and (\ref{A3}) are satisfied if $\|f\|_{N/2}$ (and  therefore $\|f\|_{H^{-1}(\Omega)}$, since
$L^{N/2}(\Omega)\subset H^{-1}(\Omega)$) is sufficiently small.
\\
\indent Similarly, if, for example, $f$ is sufficiently small such that it
satisfies
$$\alpha-\gamma C_{N}^{2}\|f\|_{N/2}>0,$$
then the two smallness conditions (\ref{A1}) and (\ref{A3}) are satisfied if $\|a_0\|_{q}$ is sufficiently small
(which implies,
since  $L^{q}(\Omega)\subset L^{N/2} (\Omega)$,
that $\|a_0\|_{N/2}$ is sufficiently small), since $\|a_0\|_{q}$
appears in the denominator of the right-hand side of (\ref{A3}).
\qquad$\Box$ }
\end{remark}
%===============================================     Remarque      2.3       ========================================

\begin{remark}\label{rmq2.3}
{\rm
 The definitions of the two constants $\delta_0$ and
$Z_{\delta_0}$ which appear in Theorem  \ref{theo} are given in (the
technical) Appendix 6 (see Lemma \ref{lemma 2}).
These definitions
are based on the properties of the family of functions $\Phi_\delta$ (see
(\ref{Phidelta bis})) which look like convex parabolas (see Figure 2 and Remark~\ref{rmq6.2bis}):
the constant $\delta_0$ is the unique value of the parameter $\delta$ for which
the function $\Phi_{\delta_0}$ has a double zero, and
$Z_{\delta_0}$ is the value of this double zero. The two smallness
conditions (\ref{A1}) and (\ref{A3}) ensure that $\delta_0$
satisfies $ \delta_0 \geq \gamma$, a condition which is essential in
our proof. 
\\
\indent In Remark \ref{rmq3.9} we try to explain where the two
smallness conditions (\ref{A1}) and (\ref{A3}) come from.
\\
\indent In Remark \ref{rmq 3.10}, we explain why we have chosen to
state Theorem \ref{theo34} with $\delta=\delta_0$ rather than with a
fixed $\delta$ with $\gamma\leq\delta\leq\delta_0$.
\qquad $\Box$ }
\end{remark}

\begin{remark}\label{rmq2.4}
{\rm
In assumption (\ref{2.2}) we have assumed that $N \geq 3$, 
because 
%in the present paper 
we use the Sobolev's embedding (\ref{2.7'}).
All the proofs of the present paper can nevertheless be easily adapted to the cases where $ N = 1$ and $N =2$, 
providing similar results,
by using the fact that $H^1_0 (\Omega) \subset L^\infty (\Omega)$ when $N = 1$ 
and that $H^1_0 (\Omega) \subset L^p (\Omega)$ for every $p < + \infty$  when $N = 2$, 
and by replacing assumption $q > { \dfrac{N}{2}}$ made in (\ref{2.5}) when $N \geq 3$ by $q = 1$ when $ N = 1$ and by  $q > 1$ when $N =2$,
and the assumption made in (\ref{2.4}) that $f \in L^{N/2} (\Omega)$ by $f \in L^1 (\Omega)$ when $ N = 1$ and by  $f \in L^m (\Omega)$ with $m > 1$ when $N =2$ (and also replacing  the norm $\| f \|_{N/2}$ by the corresponding norm).

\par In assumption (\ref{2.5}) and (\ref{2.4}) we have assumed that $a_{0}\neq0$ and that $f\neq0$.
Indeed the case where $a_{0} = 0$ has been treated by V.~Ferone and F.~Murat in \cite{fm},
and in the case where $f = 0$, then $u = 0 $ is a solution of (\ref{2.1}) and the results of the present paper become trivial.
}
\qquad $\Box$
\end{remark}

\section{An equivalence result}
 The main results of this Section are Theorems \ref{theoequiv} and
 \ref{theo34}. 
 In contrast, Remarks \ref{rmq3.1}, \ref{Rmq3.2} and \ref{rmq 34}
 and Lemma \ref{lemma3 2} can be considered as technical results.
 \par Indeed, as  said above, the proof of Theorem \ref{theo} is based on
 the equivalence result of Theorem \ref{theoequiv}  that we state and prove
in this Section. This equivalence Theorem in particular implies that
Theorem \ref{theo34},  which we state at the end of this Section, is
equivalent to Theorem \ref{theo}. Theorem \ref{theo34} will be
proved in Sections 4 and 5. 
 \par This Section also includes Remark
\ref{rmq3.9} in which  we try to explain where the two smallness
conditions (\ref{A1}) and (\ref{A3})
 come from, as well as Remark \ref{rmq 3.10} where we explain why we have
 chosen to state  Theorem \ref{theo34} for $\delta=\delta_0$.
 \\
 \par In this Section (as well as in the whole of the present paper) we always assume that
\begin{equation}\label{3.0}
\delta>0.
\end{equation}
\par Let us first proceed with a formal computation. 
\par If $u$ is a solution of
\begin{equation}\label{th1}
\left\{\begin{array}{ll}
-\textrm{div}\,(A(x)Du)=H(x,u,Du)+f(x)+a_{0}(x)\, u & {\rm in }\;\;\; \Omega,\\
&
\\u =0 \;\;\;{\rm on}\;\;\;\partial\Omega, &
\end{array}
\right.
\end{equation}
and if we formally define the function $w_{\delta}$ by
\begin{equation}\label{w}
w_{\delta}=\frac{1}{\delta}\left(e^{\delta|u|}-1\right)\textrm{sign}(u),
\end{equation}
where the function sign is defined by (\ref{def sign}), we have, at
least formally,
\begin{equation}\label{4.3}
\left\{\begin{array}{l} e^{\delta|u|}=1+\delta|w_\delta|,\qquad
|u|=\dfrac{1}{\delta}\,\log(1+\delta|w_{\delta}|),
\qquad\textrm{sign}(u)=\textrm{sign}(w_{\delta}),
\\
\\
Dw_{\delta}=e^{\delta|u|}Du,\quad A(x) Dw_{\delta}=e^{\delta|u|}A(x) Du,
%w_{\delta}+a_{0}(x)\, g_{\delta}(|w_{\delta}|)\textrm
%{sign}(w_{\delta})\quad\rm{in}\;\Omega ,\\
\\
\\
-\textrm{div}\,(A(x)Dw_{\delta})=-\delta e^{\delta
 |u|}\,A(x)DuDu\,\textrm{sign}(u) -e^{\delta |u|}\big
(\textrm{div}\,(A(x)Du)\big),
\end{array}\right.
\end{equation}
and therefore $w_{\delta}$ is, at least formally, a solution of
\begin{equation}\label{equa2'}
\left\{\begin{array}{l}
 -\textrm{div}\,(A(x)Dw_{\delta})=
\\
\\
=-\delta e^{\delta |u|}A(x)DuDu\,{\rm sign}(u)+e^{\delta
|u|}H(x,u,Du) +e^{\delta |u|}\,f(x)+e^{\delta |u|}\,  a_{0}(x)\, u=
\\
\\

= -K_\delta(x,w_{\delta},Dw_{\delta})\,\textrm{sign}(w_{\delta})\,+
\\
\\
+(1+\delta|w_{\delta}|)\,f(x) +\,a_{0}(x)\, w_{\delta}+a_{0}(x)\, g_{\delta}\,(w_{\delta})\,\textrm
{sign}(w_{\delta}) \;\;\;\rm{in}\;\;\;\Omega ,\\
\\
w_{\delta}=0 \;\;\; \rm{on}\;\;\;\partial\Omega,
\end{array}\right.
\end{equation}
whenever the functions  $K_{\delta}:
\Omega\times\mathbb{R}\times\mathbb{R}^{N}\rightarrow \mathbb{R}$
 and $g_{\delta}: \mathbb{R}\rightarrow
\mathbb{R}$ are defined by the formulas
\begin{equation}\label{def k}
\left\{\begin{array}{l}
K_{\delta}(x,t,\zeta)=\\
\\
=\dfrac{\delta}{1+\delta|t|}\,A(x)
\zeta\zeta-(1+\delta|t|)H(x,\dfrac{1}
{\delta}\,\textrm{log}(1+\delta|t|)\,\textrm{sign}(\textit{t}),\dfrac{\zeta}{1+\delta|t|})
{\,\rm{sign}}(t),
\\
\\
{\rm a.e.}\; x\in\Omega, \;\;\;\forall
t\in\mathbb{R},\;\;\;\forall\zeta\in\mathbb{R}^{N},
\end{array}\right.
\end{equation}
and
\begin{equation}\label{def g}
g_{\delta}(t)=-|t|+\frac{1}{\delta}(1+\delta |t|)\log(1+\delta
|t|),\;\;\;\forall t\in\mathbb{R},
\end{equation}
\noindent which is equivalent to
\begin{equation}\label{3.6bis}
t+g_{\delta}(t)\,{\rm{sign}}(t)=(1+\delta |t|)
 \dfrac{1}{\delta}\log(1+\delta
|t|)\,{\rm{sign}}(t),\quad \forall t\in\mathbb{R}.
\end{equation}
\par Conversely, if $w_{\delta}$ is a solution of (\ref{equa2'}), and if we formally define the function $u$ by
\begin{equation}\label{def u'}
u=\frac{1}{\delta}\log(1+\delta|w_{\delta}|)\,\textrm{sign}(w_{\delta}),
\end{equation}
the same formal computation easily shows that $u$ is a solution of (\ref{th1}).\\
\par The main goal of this Section is to transform this formal equivalence into a mathematical result,
namely Theorem \ref{theoequiv}. We begin with  three  remarks  on
the functions $K_\delta$ and $g_\delta$, namely Remarks
\ref{rmq3.1}, \ref{rmq3.2} and   \ref{rmq3.3}.

\vfill
\eject

% ============================= Remarque 3.1  =========================================
\begin{remark}\label{rmq3.1}
\rm{ Observe that, because of the inequality (\ref{2.6}) on the function
$H$, and because of the coercivity (\ref{2.3}) of the matrix $A$,
one has
\begin{equation}\label{def K1}
\left\{\begin{array}{l} (c_0+\delta)A(x)\zeta\zeta\geq \dfrac{c_0+\delta}{(1+\delta|t|)}A(x)\zeta\zeta =
\\
\\
=\dfrac{\delta}{(1+\delta|t|)}A(x)\zeta\zeta+
(1+\delta|t|)\dfrac{c_0}{(1+\delta|t|)^2}A(x)\zeta\zeta\geq
\\
\\
\geq K_{\delta}(x,t,\zeta)\,\geq
\\
\\
\geq\dfrac{\delta}{(1+\delta|t|)} A(x)\zeta\zeta
 -(1+\delta|t|)\dfrac{\gamma
}{(1+\delta|t|)^{2}}A(x)\zeta\zeta=
\\
\\
= \dfrac{(\delta-\gamma)}{(1+\delta|t|)}A(x)\zeta\zeta\geq -|\delta -\gamma|\,A(x) \zeta\zeta,
\\
\\
{\rm a.e.}\; x\in\Omega, \;\;\;\forall
t\in\mathbb{R},\;\;\;\forall\zeta\in\mathbb{R}^{N},
\;\;\;\forall{\delta>0}.
\end{array}\right.
\end{equation}
When $\delta\geq\gamma$, this computation in particular implies that
\begin{equation}\label{def K2}
\begin{array}{l}
(c_0+\delta)A(x)\zeta\zeta \geq K_{\delta}(x,t,\zeta)\geq 0 \quad
{\rm a.e.}\; x\in \Omega,\;\;\;\forall t\in
\mathbb{R},\;\;\;\forall \zeta\in\mathbb{R}^{N}\;\;\; {\rm if}
\;\;\;\delta\geq\gamma.
\end{array}
\end{equation}
}
\hspace{13.8cm} \qquad $\Box$
\end{remark}
%=================================     Remarque 3.2   ============================================================
\begin{remark}\label{Rmq3.2}\label{rmq3.2}
\rm{In this technical Remark we prove that the functions
$K_{\delta}(x,w,Dw)$ and\break
 $K_{\delta}(x,w,Dw)\,{\rm{sign}}(w)$
 are correctly defined 
 and are measurable functions when  $w\in H^{1}(\Omega)$, and we prove their continuity with respect to  the almost
 everywhere convergence of $w$ and $Dw$ (see Lemma \ref{lemma3 2}).
\par
Note that the function $K_{\delta}(x,t,\zeta)$ defined by (\ref{def
k}) and the function
 $K_{\delta}(x,t,\zeta)\,{\rm{sign}}(t)$ are not
 Carath\'{e}odory functions, because their definitions involve the function ${\rm{sign}}(t)$,
  which it is not a Carath\'{e}odory function since is not continuous at $t=0$.
 This lack of continuity in $t=0 $ is however the only obstruction for the functions
  $K_{\delta}(x,t,\zeta)$ and $K_{\delta}(x,t,\zeta)\,{\rm sign}(t)$ to be Carath\'{e}odory functions, and
 \begin{equation}\label{3.10}
\left\{\begin{array}{l}
 {\rm for\; every} \; w\in H^{1}(\Omega),
\\\\
 {\rm  the\; functions}\;   K_{\delta}(x,w,Dw)\; {\rm  and}\; K_{\delta}(x,w,Dw)\,{\rm sign}(w) \; {\rm are\; well\; defined}\;
\\\\  {\rm and \;are\; measurable \;functions},
 \end{array}
\right.
\end{equation}
as it immediately results  from the two formulas
 \begin{equation}\label{h'bis}
\left\{
\begin{array}{l}
K_{\delta}(x,t,\zeta)=\\
\\={\dfrac{\delta}{1+\delta
|t|}\,A(x)\zeta\zeta -({1+\delta
|t|})\,H(x,\dfrac{1}{\delta}\log(1+\delta|t|)\,{\rm
sign}(\textit{t}) ,\dfrac{\zeta}{1+\delta|\textit{t}|})\,\rm
sign}(t)\,=
\\
\\
= \dfrac{\delta}{1+\delta |t|}\,A(x)\zeta\zeta\, +
\\
\\
+\;\chi_{\{t<0\}}(t)\,(1+\delta |t|)\,
H(x,-\dfrac{1}{\delta}\log(1+\delta|t|),\dfrac{\zeta}{1+\delta|\textit{t}|})-\chi_{\{t=0\}}(t)\,0
\,+
\\
\\
 -\;\chi_{\{t>0\}}(t)\,(1+\delta
|t|)\,H(x,\dfrac{1}{\delta}\log(1+\delta|t|),\dfrac{\zeta}{1+\delta|\textit{t}|}),
\\
\\
{\rm a.e.}\; x\in\Omega, \;\;\;\forall
t\in\mathbb{R},\;\;\;\forall\zeta\in\mathbb{R}^{N},
\end{array}
\right.
\end{equation}

%\vfill
%\eject

 \begin{equation}\label{h'}
\left\{
\begin{array}{l}
K_{\delta}(x,t,\zeta)\,{\rm sign}(\textit{t})=\\
\\
={\rm sign}(t)\,\dfrac{\delta}{1+\delta |t|}\,A(x)\zeta\zeta
-({1+\delta |t|})\,H(x,\dfrac{1}{\delta}\log(1+\delta|t|)\,{\rm
sign}(\textit{t}),\dfrac{\zeta}{1+\delta|\textit{t}|})\,=
\\
\\

=-\;\chi_{\{t<0\}}(t)\,\dfrac{\delta}{1+\delta |t|}\,A(x)\zeta\zeta
+\chi_{\{t=0\}}(t)\,0+ \chi_{\{t>0\}}(t)\,\dfrac{\delta}{1+\delta
|t|}\,A(x)\zeta\zeta \,+
\\
\\
-\;\chi_{\{t<0\}}(t)\,(1+\delta |t|)\,H(x,-\dfrac{1}{\delta}
\log(1+\delta|t|),\dfrac{\zeta}{1+\delta|\textit{t}|})-\chi_{\{t=0\}}(t) \, H(x, 0, \zeta) \,+
\\
\\
-\;\chi_{\{t>0\}}(t)\,(1+\delta
|t|)\,H(x,\dfrac{1}{\delta}\log(1+\delta|t|),\dfrac{\zeta}{1+\delta|\textit{t}|}),
\\
\\
{\rm a.e.}\; x\in\Omega, \;\;\;\forall
t\in\mathbb{R},\;\;\;\forall\zeta\in\mathbb{R}^{N}.
\end{array}
\right.
\end{equation}
%\begin{equation}
%\left\{\begin{array}{l}
% {\rm for\; every} \; w\in H^{1}(\Omega),
%\\\\
% {\rm  the\; functions}\;   K_{\delta}(x,w,Dw)\; {\rm  and}\; K_{\delta}(x,w,Dw)\,{\rm sign}(w) \; {\rm are\; well\; defined}\;
%\\\\  {\rm and \;are\; measurable \;functions}.
% \end{array}
%\right.
%\end{equation}
\par
Moreover, in view of (\ref{def K1})  one has
\begin{equation}
K_{\delta}(x,w,Dw)\in L^1(\Omega),\;\;\;K_{\delta}(x,w,Dw)\,{\rm
sign}(\textit{w})\in L^1(\Omega),\;\;\;\forall w\in
H^1(\Omega),\;\;\;\forall \delta >0.
\end{equation}

\hspace{13.4cm} \qquad $\Box$
\\ 
\par
One also has  the following convergence result. }
\end{remark}

%=========================================================      lemme  3.3    ==================================================
\begin{lemma}\label{lemma3 2}
Consider a sequence $w_n$ such that
\begin{equation}
\begin{array}{l}
 w_{n}\in H^{1}(\Omega),\;w\in H^{1}(\Omega),\quad
 w_{n}\rightarrow w \quad {\rm a.e. \;\;\;in}\;\;\;\Omega,\; \;\;Dw_{n}
 \rightarrow Dw\;\;\; {\rm a.e. \;\;\;in\;\;\;}\Omega.
 \end{array}
 \end{equation}
\indent Then
\begin{equation}\label{consconv}
\left\{
\begin{array}{l}
 K_{\delta}(x,w_{n},Dw_{n})\rightarrow K_\delta(x,w,Dw)\;\;\;{\rm  a.e.\;\;\;in\;\;\;}\Omega, \\
 \\
 K_{\delta}(x,w_{n},Dw_{n})\,{\rm sign}(w_{n})\rightarrow K_\delta(x,w,Dw)\,{\rm sign}(w)\;\;\;{\rm
 a.e.\;\;\;in\;\;\;}\Omega.
 \end{array}
 \right.
 \end{equation}
\end{lemma}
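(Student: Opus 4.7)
I would argue pointwise. Fix $x$ in the full-measure subset of $\Omega$ on which the matrix $A(x)$ is bounded and elliptic, the function $H(x,\cdot,\cdot)$ is continuous on $\mathbb{R}\times\mathbb{R}^N$ (by the Carath\'eodory hypothesis (\ref{2.6})), the given pointwise convergences $w_n(x)\to w(x)$ and $Dw_n(x)\to Dw(x)$ hold, and, crucially, $Dw(x)=0$ whenever $w(x)=0$. The last property is Stampacchia's classical theorem for $H^1$ functions. At such an $x$ I would then dichotomise on whether $w(x)\neq 0$ or $w(x)=0$, since, as emphasised in Remark~\ref{Rmq3.2}, the only defect of continuity of the maps $(t,\zeta)\mapsto K_\delta(x,t,\zeta)$ and $(t,\zeta)\mapsto K_\delta(x,t,\zeta)\,\mathrm{sign}(t)$ is localised at the origin of the $t$ variable.

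If $w(x)\neq 0$, then for all $n$ sufficiently large $w_n(x)$ has the same sign as $w(x)$, so $\mathrm{sign}(w_n(x))$ is eventually constant and equal to $\mathrm{sign}(w(x))$. In the formula (\ref{def k}), the sign factor therefore causes no trouble along the sequence; the remaining pieces $t\mapsto\tfrac{1}{\delta}\log(1+\delta|t|)$ and $\zeta\mapsto\zeta/(1+\delta|t|)$ are everywhere continuous, and their composition with the Carath\'eodory function $H(x,\cdot,\cdot)$, together with the continuous quadratic form $\zeta\mapsto A(x)\zeta\zeta$, yields joint continuity of $K_\delta(x,\cdot,\cdot)$ at the point $(w(x),Dw(x))$. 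Both convergences in (\ref{consconv}) follow by direct passage to the limit in each term.

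The substantive case is $w(x)=0$, where $\mathrm{sign}(w_n(x))$ may fail to converge. Inspection of (\ref{def k}) gives $K_\delta(x,0,0)=0$, so it suffices to prove $K_\delta(x,w_n(x),Dw_n(x))\to 0$; the second convergence in (\ref{consconv}) will then follow for free, since $|\mathrm{sign}(w_n(x))|\leq 1$. For this I would invoke the two-sided estimate (\ref{def K1}), which yields the uniform quadratic bound
\[
|K_\delta(x,t,\zeta)|\leq (c_0+\delta+|\delta-\gamma|)\,A(x)\zeta\zeta\leq C\,|\zeta|^2 \quad\text{for every } t\in\mathbb{R},\ \zeta\in\mathbb{R}^N,
\]
with $C$ depending only on $\|A\|_\infty$, $c_0$, $\gamma$, $\delta$. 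Applied with $(t,\zeta)=(w_n(x),Dw_n(x))$ and combined with the Stampacchia fact $Dw(x)=0$ on $\{w=0\}$ and the hypothesis $Dw_n(x)\to Dw(x)$, this forces the right-hand side to $0$. The one genuine obstacle — the discontinuity of $\mathrm{sign}$ at the origin — is thus neutralised by the combination of Stampacchia's zero-gradient property with the $|\zeta|^2$ control on $K_\delta$ provided by the quadratic growth assumption on $H$.
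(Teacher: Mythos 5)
Your proof is correct. The overall structure mirrors the paper's: you work pointwise on a full-measure set, you dichotomise on whether $w(x)=0$, on $\{w\neq 0\}$ you exploit that $\mathrm{sign}(w_n(x))$ is eventually constant so that $K_\delta(x,\cdot,\cdot)$ is jointly continuous at $(w(x),Dw(x))$, and on $\{w=0\}$ you use Stampacchia's property $Dw=0$ a.e.\ on $\{w=0\}$. Where you differ is in how the case $\{w=0\}$ is finished. The paper first records, in (\ref{l'}), the pointwise limits of the four ``branch'' functions entering the characteristic-function decompositions (\ref{h'bis}) and (\ref{h'}), observes that all four limits vanish on $\{w=0\}$, and then concludes because the characteristic functions $\chi_{\{w_n<0\}},\chi_{\{w_n=0\}},\chi_{\{w_n>0\}}$ are bounded by $1$ even though their individual limits are unknown there. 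You instead short-circuit this bookkeeping by invoking the two-sided inequality (\ref{def K1}), which gives the uniform-in-$t$ quadratic bound $|K_\delta(x,t,\zeta)|\leq C\,A(x)\zeta\zeta$, so that $|K_\delta(x,w_n,Dw_n)|\leq C\,A(x)Dw_nDw_n\to C\,A(x)DwDw=0$ directly; the sign factor is then absorbed by $|\mathrm{sign}|\leq 1$. This is a genuine simplification: it avoids establishing the branch convergences (\ref{l'}) and tracking the decomposition (\ref{h'bis})--(\ref{h'}) on $\{w=0\}$, at the mild cost of replacing the sharp constants in (\ref{def K1}) by a single $\sup$; the conclusion and the ingredients it rests on (Carath\'eodory continuity of $H$, Stampacchia's theorem, and the quadratic growth in (\ref{2.6}) encoded in (\ref{def K1})) are the same.
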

\proof On the first hand, we have the following almost everywhere
convergences
 \begin{equation}\label{l'}
\left\{
\begin{array}{l}
\dfrac{\delta}{1+\delta
|w_n|}A(x)Dw_nDw_n\rightarrow\dfrac{\delta}{1+\delta
|w|}A(x)DwDw\;\;\; {\rm {a.e.\;\;\; in}}\;\;\;\Omega,
\\
\\
\displaystyle(1+\delta|w_n|)H(x,-\dfrac{1}{\delta}\log(1+\delta|w_n|),\dfrac{Dw_n}{1+\delta
|w_n|})\rightarrow
\\
\rightarrow(1+\delta|w|)H(x,-\dfrac{1}{\delta}\log(1+\delta|w|),\dfrac{Dw}{1
+\delta |w|})\;\;\;\;{\rm a.e.\;\;\; in}\;\;\;\Omega,
\\
\\
H(x,0,Dw_n)\rightarrow H(x,0,Dw)\;\;\;{\rm a.e.\;\;\; in}\;\;\;\Omega,
\\
\\
\displaystyle(1+\delta|w_n|)H(x,\dfrac{1}{\delta}\log(1+\delta|w_n|),\dfrac{Dw_n}{1+\delta
|w_n|})\rightarrow
\\
\rightarrow
(1+\delta|w|)H(x,\dfrac{1}{\delta}\log(1+\delta|w|),\dfrac{Dw}{1+\delta
|w|})\;\;\;\;{\rm a.e.\;\;\; in}\;\;\;\Omega.
\end{array}
\right.
\end{equation}
%\vspace{0.1mm}
%\smallskip
\par
On the other hand, for almost every $x$ fixed in the set  $\{y\in
\Omega:w(y)>0\}$, the assertion
$w_n(x)\rightarrow w(x)$
 implies, since  $w(x)>0$, that one has  $w_n(x)>0$ for $n$ sufficiently large (depending on $x$), and therefore that, for some $n> n^\star(x)$, one has
\begin{equation}
\left\{
\begin{array}{l}
\chi_{\{w_n<0\}}(w_n(x))=0=\chi_{\{w<0\}}(w(x)) \;\;\;{\rm
for}\;\;\;  n>n^\star (x),
\\
\\
\chi_{\{w_n=0\}}(w_n(x))=0=\chi_{\{w=0\}}(w(x)) \;\;\;{\rm for}
\;\;\; n>n^\star(x),
\\
\\
\chi_{\{w_n>0\}}(w_n(x))=1=\chi_{\{w>0\}}(w(x)) \;\;\;{\rm for}
\;\;\; n>n^\star(x).
\end{array}
\right.
\end{equation}
These convergences and (\ref{l'}) imply that
\begin{equation*}
\left\{
\begin{array}{l}
 K_{\delta}(x,w_{n},Dw_{n})\rightarrow K_\delta(x,w,Dw) \;\;\; {\rm a.e.\;\;\; in\;\;\;} \{y\in \Omega:w(y)>0\}, \\
 \\
 K_{\delta}(x,w_{n},Dw_{n})\,{\rm sign}(w_{n})\rightarrow K_\delta(x,w,Dw)\,{\rm sign}(w) \;
 \;\; {\rm a.e. \;\;\; in\;\;\;} \{y\in \Omega:w(y)>0\} .
 \end{array}
 \right.
 \end{equation*}
\par
The same proof gives the similar result in the set $\{y\in
\Omega:w(y)<0\}$.
\\
\par
 The proof  in  the set $\{y\in \Omega:w(y)=0\}$ is a little bit more delicate.
 Let us first observe that  for $w\in H^1(\Omega)$ one has
\begin{equation}\label{dw=0}
Dw=0\;\;\;{\rm a.e.\;\;\; in}\;\;\;\{y\in \Omega:w(y)=0\},
\end{equation}
and since inequality (\ref{2.6}) on the function $H$ implies that
\begin{equation}\label{hsO}
H(x,s,0)=0\;\;\; {\rm a.e. }\; x \in\Omega,\;\;\;\forall
s\in\mathbb{R},\;s\neq0,
\end{equation}
and therefore by continuity in $s$ that
\begin{equation}\label{h00}
H(x,0,0)=0\;\;\; {\rm a.e.}\; x \in\Omega.
\end{equation}
Then,  on the first hand, theses results and  formulas (\ref{h'bis}) and (\ref{h'}) imply that
\begin{equation}\label{3.18biss}
K_{\delta}(x,w,Dw) = K_\delta(x,w,Dw)\,{\rm{sign}}(w)=0 \;\;\;{\rm
a.e.\;\;\; in}\;\; \; \{y\in \Omega:w(y)=0\}.
\end{equation}
On the other hand, in view of (\ref{dw=0}) and (\ref{h00}), the
four functions which appear in the four limits in (\ref{l'}) vanish
almost everywhere in the set $\{y\in \Omega:w(y)=0\}$.  Even if we
do not know anything about the pointwise limits of
the functions $\chi_{\{w_n<0\}}(w_n(x))$, $\chi_{\{w_n = 0\}}(w_n(x))$ and
$\chi_{\{w_n>0\}}(w_n(x))$ in the set $\{y\in\Omega\,:\,w(y)=0\}$,
 this fact and formulas (\ref{h'bis}) and (\ref{h'}) prove that
\begin{equation}\label{k18}
\left\{
\begin{array}{l}
 K_{\delta}(x,w_{n},Dw_{n})\rightarrow 0\;\;\;{\rm a.e.\;\;\; in \;\;\;}\{y\in \Omega:w(y)=0\}, \\
 \\
 K_{\delta}(x,w_{n},Dw_{n})\,{\rm sign}(w_{n})\rightarrow 0\;\;\;{\rm a.e.\;\;\; in \;\;\;}\{y\in \Omega:w(y)=0\}.
 \end{array}
 \right.
 \end{equation}
 \noindent From (\ref{3.18biss}) and (\ref{k18}) we deduce that
\begin{equation}
 \left\{
\begin{array}{l}
 K_{\delta}(x,w_{n},Dw_{n})\rightarrow K_\delta(x,w,Dw)\;\;\;{\rm a.e.\;\;\; in \;\;\;}\{y\in \Omega:w(y)=0\}, \\
 \\
 K_{\delta}(x,w_{n},Dw_{n})\,{\rm sign}(w_{n})\rightarrow
  K_\delta(x,w,Dw)\,\textrm{sign}(w)\;\;\;{\rm a.e.\;\;\; in\;\; \;}\{y\in \Omega:w(y)=0\}.
 \end{array}
 \right.
 \end{equation}
 \par This completes the proof of (\ref{consconv}).
 \qquad $\Box$

%============================================    Remarque 3.4   ==================================================
\begin{remark}\label{rmq 34} \label{rmq3.3}
\rm{Observe that the function $g_\delta(s)\,{\rm{sign}}(s)$ is a
Carath\'{e}odory function since
 in view of (\ref{aster}) this function is continuous at $s=0$. This allows one to define  $g_\delta(w)\,{\rm{sign}}(w)$
 as a measurable function for every $w\in H^{1}(\Omega).
 $\qquad$\Box$
}
\end{remark}
\par The main result of this Section is the following equivalence Theorem.

%=================================================   Theoreme d'equivalence   3.5   =====================

\begin{theorem}\label{theoequiv}
Assume that {\rm{(\ref{2.2}), (\ref{2.3}), (\ref{2.6}), (\ref{2.5}),
(\ref{2.5bis}) and (\ref{2.4})}}
 hold true, and let $\delta >0$ be fixed. Let  the functions $K_{\delta}$ and  $g_{\delta}$ be defined by {\rm (\ref{def k})} and   {\rm (\ref{def g})}.
\\
\indent If $u$ is any solution of {\rm(\ref{2.1})} which satisfies
\begin{equation}\label{2.23}
 (e^{\delta|u|}-1)\in H^{1}_{0}(\Omega),
\end{equation}
\noindent then the function $w_\delta$ defined by {\rm(\ref{w})},
namely by
$$
w_{\delta}=\frac{1}{\delta}\left(e^{\delta|u|}-1\right){\,\rm{sign}}(u),
$$
\noindent satisfies
\begin{equation}\label{CC}
\left\{\begin{array}{l} w_{\delta}\in H_{0}^{1}(\Omega),
\\
\\
-\mbox{\rm
{div}}\,(A(x)Dw_{\delta})+K_\delta(x,w_{\delta},Dw_{\delta})\,{\rm{sign}}(w_{\delta})\,=
\\
\\
= (1+\delta|w_{\delta}|)\,f(x) + a_{0}(x)\, w_{\delta}+a_{0}(x)\, g_{\delta}(w_{\delta}){\,\rm{sign}}(w_{\delta})
\;\;\; {in}\;\;\;\mathcal{D}'(\Omega).
\end{array}\right.
\end{equation}
\indent Conversely, if $w_\delta$ is any solution of
{\rm(\ref{CC})}, then the function $u$ defined by {\rm(\ref{def u'})},
namely by
$$
u=\frac{1}{\delta}\log(1+\delta|w_{\delta}|)\,{\rm
sign}(w_{\delta}),
$$
is a solution of {\rm(\ref{2.1})} which satisfies {\rm(\ref{2.23})}.
\end{theorem}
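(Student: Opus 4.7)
The plan is to rigorously justify the formal computation displayed in (\ref{4.3})--(\ref{equa2'}). Introduce the maps $\Psi : s \mapsto \frac{1}{\delta}(e^{\delta|s|}-1)\,\textrm{sign}(s)$ and its inverse $\Phi : t \mapsto \frac{1}{\delta}\log(1+\delta|t|)\,\textrm{sign}(t)$; both are of class $C^1(\mathbb{R})$, with $\Psi'(s) = e^{\delta|s|}$ and $\Phi'(t) = (1+\delta|t|)^{-1} \in (0,1]$. Note that $\Phi$ is globally Lipschitz while $\Psi$ is not. The pointwise identities gathered in (\ref{4.3}) together with (\ref{3.6bis}) encode every algebraic step of the proof; what remains is to lift them to the Sobolev setting and to justify the admissibility of the relevant test functions.

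For the direct implication ($u \Rightarrow w_\delta$), the assumption (\ref{2.23}) is precisely $w_\delta = \Psi(u) \in H^1_0(\Omega)$. The chain rule identity $Dw_\delta = e^{\delta|u|}\,Du$ a.e.\ is obtained by applying the standard Sobolev chain rule to $\Psi(T_k u) \in H^1_0 \cap L^\infty(\Omega)$ and passing to the limit as $k \to \infty$ by dominated convergence, with the pointwise bound $|e^{\delta |T_k u|} D(T_k u)| \le |Dw_\delta| \in L^2(\Omega)$ as majorant. To derive (\ref{CC}) in $\mathcal{D}'(\Omega)$, test equation (\ref{2.1}) against $e^{\delta|u|}\varphi$ for $\varphi \in C^\infty_c(\Omega)$; this function belongs to $H^1_0(\Omega)$ with compact support, and its admissibility is obtained by first testing against $T_n(e^{\delta|u|})\varphi \in H^1_0 \cap L^\infty(\Omega)$ (admissible by the standard density argument) and then letting $n \to \infty$. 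Each term's integrability is verified as follows: the diffusion splits into $\delta\,\textrm{sign}(u)\,e^{\delta|u|}\,A Du \cdot Du\,\varphi + A Dw_\delta \cdot D\varphi$, where the first piece is dominated by $|Dw_\delta|^2 \in L^1(\Omega)$ (using $|Du| \le |Dw_\delta|$); the quadratic term $H(x,u,Du)\,e^{\delta|u|}\varphi$ is handled identically thanks to (\ref{2.6}); the force $f\,e^{\delta|u|}\varphi$ uses $f \in L^{N/2}(\Omega)$ and $e^{\delta|u|} = 1+\delta|w_\delta| \in L^{2^*}(\Omega)$; and the semilinear contribution $a_0\,u\,e^{\delta|u|}\varphi$ uses the representation $|u|\,e^{\delta|u|} = \delta^{-1}(1+\delta|w_\delta|)\log(1+\delta|w_\delta|) \in L^p(\Omega)$ for every $p < 2^*$, combined with $a_0 \in L^q(\Omega)$ and $q' < 2^*$ (ensured by $q > N/2$). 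Rearranging with (\ref{def k}) and (\ref{3.6bis}) then produces exactly (\ref{CC}).

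The converse implication is considerably simpler because $\Phi$ is Lipschitz: given $w_\delta \in H^1_0(\Omega)$ satisfying (\ref{CC}), set $u = \Phi(w_\delta)$, and the standard $H^1$ chain rule gives $u \in H^1_0(\Omega)$ with $Du = Dw_\delta/(1+\delta|w_\delta|)$ a.e., while $\Psi \circ \Phi = \textrm{id}$ yields (\ref{2.23}). To recover (\ref{2.1}) in $\mathcal{D}'(\Omega)$, test (\ref{CC}) against the admissible function $\varphi/(1+\delta|w_\delta|)$ for $\varphi \in C^\infty_c(\Omega)$ (bounded, with compact support, and in $H^1_0(\Omega)$ via the chain rule applied to $t \mapsto (1+\delta|t|)^{-1}$), invoke Lemma~\ref{lemma3 2} for the measurability of $K_\delta(x,w_\delta,Dw_\delta)\,\textrm{sign}(w_\delta)$, and reverse the algebraic identities term by term. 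The main obstacle is thus confined to the direct direction: the combination of establishing the Sobolev chain rule for the non-Lipschitz map $\Psi$ with the admissibility and integrability of $e^{\delta|u|}\varphi$ as a test function, each step resting crucially on the bounds $e^{\delta|u|} \in L^{2^*}(\Omega)$ and $|Du| \le |Dw_\delta| \in L^2(\Omega)$, together with the hypothesis $q > N/2$ needed to control the semilinear term.
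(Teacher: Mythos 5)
Your proof is correct, but it takes a genuinely different route from the paper's. The paper's argument is a short reduction: it defines $\hat{f}(x)=f(x)+a_0(x)u(x)$, uses the algebraic identity (\ref{3.23bis}) to rewrite the right-hand side of (\ref{CC}) as $(1+\delta|w_\delta|)\hat f$, observes that $\hat f\in L^{N/2}(\Omega)$ (because $a_0\in L^q$, $q>N/2$, while $e^{\delta|u|}\in L^1(\Omega)$ forces $u\in L^r(\Omega)$ for every $r<\infty$), and then invokes Proposition 1.8 of \cite{fm} as a black box to get the whole equivalence. You instead re-derive the change of variables from scratch: you establish the chain rule $Dw_\delta = e^{\delta|u|}Du$ for the non-Lipschitz map $\Psi$ by truncation and dominated convergence, justify testing (\ref{2.1}) against $e^{\delta|u|}\varphi$ through the approximation $T_n(e^{\delta|u|})\varphi$, check the integrability of every term ($e^{\delta|u|}=1+\delta|w_\delta|\in L^{2^*}$, $|Du|\le|Dw_\delta|$, $q'<2^*$), and in the converse direction exploit the Lipschitz map $\Phi$ and the test function $\varphi/(1+\delta|w_\delta|)$. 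Your route is entirely self-contained but essentially replicates the content of Proposition 1.8 of \cite{fm}; the paper's route is a few lines, at the cost of relying on that earlier result. Both are legitimate, and your estimates are all correct.

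Two minor points. First, assumption (\ref{2.23}) literally gives $|w_\delta|\in H^1_0(\Omega)$, not directly $w_\delta\in H^1_0(\Omega)$; you assert they coincide, which is true, but the step worth recording is that the truncated functions $g(T_k u)$ (with $g(s)=\tfrac1\delta(e^{\delta|s|}-1)\mathrm{sign}(s)$) lie in $H^1_0(\Omega)$ and converge to $w_\delta$ in $H^1(\Omega)$, so that $w_\delta\in H^1_0(\Omega)$ by closedness. Second, you cite Lemma~\ref{lemma3 2} for the \emph{measurability} of $K_\delta(x,w_\delta,Dw_\delta)\,\mathrm{sign}(w_\delta)$, but that lemma is about almost-everywhere convergence; measurability and the $L^1$ bound come from Remark~\ref{Rmq3.2} (see (\ref{3.10}) and (\ref{def K1})). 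Neither of these affects the validity of your argument.
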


%======================================================     Remarque   3.6      ==========================================

\begin{remark}
{\rm Every term of the equation in (\ref{CC}) 
has a meaning in the sense of distributions: 
indeed the first term of the left-hand side of the equation in (\ref{CC}) belongs to $H^{- 1} (\Omega)$;
on the other hand, the four other terms of the  equation are
measurable functions (see Remarks \ref{Rmq3.2} and \ref{rmq 34}); 
  the second term of the left-hand side of the equation in
(\ref{CC}) belongs to $L^1(\Omega)$  in view of (\ref{def K1}),
while the three terms of the right-hand side of the equation in (\ref{CC}) can be
proved to belong to $(L^{2^\star} \!(\Omega))'$ (see e.g. the proof of (\ref{3.8}) in Remark
\ref{rmq3.9} and the proof of (\ref{4.9}) in the proof of Lemma
\ref{lem}).
\qquad$\Box$ }
\end{remark}

%===================================================   Remarque    3.7   ===============================

\begin{remark}
{\rm Observe that the equivalence Theorem \ref{theoequiv} holds true
without assuming the two smallness conditions (\ref{A1}) and
(\ref{A3}); moreover one could even have removed
in (\ref{2.3}) the assumption that the matrix $A$ is coercive, and still obtain the same equivalence result.
\\
\indent Note however that Theorem \ref{theoequiv} is an equivalence
result which does not proves neither the existence of a solution of
(\ref{2.1}) nor the existence of a solution of (\ref{CC}), but which
assumes as an hypothesis either  the existence of a solution of
(\ref{2.1}) which also satisfies (\ref{2.23}),  or the existence of
a solution of (\ref{CC}). 
\qquad$\Box$ }
\end{remark}
\par
\noindent
\begin{proof}[Proof of Theorem \ref{theoequiv}]
Define the function $\hat{f}$ by
\begin{equation}\label{FM}
\hat{f}(x)=f(x)+a_{0}(x)\, u(x),
\end{equation}
\noindent In view of (\ref{w}) and of the definition (\ref{def g})
of $g_{\delta }(s)$, one has (see (\ref{3.6bis}) and (\ref{4.3}))
\begin{equation}\label{3.23bis}
 \left\{
\begin{array}{l}
(1+\delta|w_{\delta}|)\, f(x) +
a_{0}(x)\, w_{\delta}+a_{0}(x)\, g_{\delta}(w_{\delta})\,{\rm
sign}(w_{\delta})=
\\
\\
=(1+\delta|w_{\delta}|)\,\big(f(x)+a_{0}(x)\, \dfrac{1}{\delta}\,\log(1+\delta
|w_\delta|)\,{\rm{sign}}(w_\delta)\big)=
\\
\\
=(1+\delta|w_{\delta}|)\,(f(x)+a_{0}(x)\, u(x))=(1+\delta
|w_\delta|)\hat{f}(x).

 \end{array}
 \right.
 \end{equation}
\indent Then Theorem \ref{theoequiv} becomes an immediate
application of Proposition 1.8 of \cite{fm}, once one observes that
\begin{equation}\label{FM'}
\hat{f}\in L^{N/2}(\Omega );
\end{equation}
such is the case in the setting of Theorem \ref{theoequiv}: indeed
$\hat{f}=f+a_0 u$, where $f$ belongs to $L^{N/2}(\Omega)$ by
assumption (\ref{2.4}), and where $a_0u$ also belongs to
$L^{N/2}(\Omega)$, since $a_0$ is assumed to belong to
$L^{q}(\Omega),\;q>\frac{N}{2}$ (see assumption (\ref{2.5})), while
$u$ belongs
 to $L^{r}(\Omega)$ for every $r<+\infty$, since by (\ref{2.23}) $(e^{\delta |u|}-1)$  is assumed
 to belong to $H^{1}_{0}(\Omega)$, hence in particular to $L^{1}(\Omega) $, which implies
 that $ e^{\delta |u|}$ belongs to $L^{1}(\Omega)$.
 %The first part of 
 Theorem \ref{theoequiv} is therefore proved.
%\\ \par To prove the converse, observe that when $\hat{f}$ is defined by (\ref{3.23bis}),
% then $\hat{f}$ is also given by (\ref{FM}), where $u$ belongs to $L^{r}(\Omega)$ for every $r<+\infty$.
% Theorem \ref{theoequiv} is proved.
\qquad$\Box$
\end{proof}
\\
\par
From the equivalence Theorem \ref{theoequiv} one immediately
deduces, setting
\begin{equation}\label{3.333}
w=\dfrac{1}{\delta_{0}}(e^{\delta_{0}|u|}-1)\,\mbox{sign}(u)
\end{equation}
and equivalently
\begin{equation}\label{3.334}
u=\frac{1}{\delta_{0}}\log(1+\delta_{0}|w|)\,{\rm{sign}}(w),
\end{equation}
that Theorem \ref{theo} is equivalent to the following Theorem.
%================================================  Theorem    3.8   ===========================================
\begin{theorem}\label{theo34}
Assume that \rm{(\ref{2.2}), (\ref{2.3}), (\ref{2.6}), (\ref{2.5}),
(\ref{2.5bis}) and (\ref{2.4})} \it{hold true}. Assume moreover that the two smallness conditions
{\rm{(\ref{A1})}} and {\rm{(\ref{A3})}} hold true.
 %and define $\delta_0$ by (\ref{Phi delta0}).
\\
\indent Then there exist a constant $\delta_0$  with
$\delta_0\geq\gamma$, and a constant $Z_{\delta_{0}}$, which are
defined in  Lemma~{\ref{lemma 2}}  (\it see
{\rm(\ref{3777bis})}, {\rm(\ref{Phi delta0 bis})} and {\rm
(\ref{6.15bis})}),
 such that there exists at least one solution $w$ of
\begin{equation}\label{CCbis}
\left\{\begin{array}{l} w\in H_{0}^{1}(\Omega),
\\
\\
-{\rm{div}}(A(x)Dw)+K_{\delta_{0}}(x,w,Dw)\,{\rm{sign}}(w)\,=
\\
\\
=(1+\delta_{0}|w|)\,f(x)+a_{0}(x)\, w+a_{0}(x)\, g_{\delta_{0}}(w)\,{\rm{sign}}(w)\;\;\;
{in}\;\;\;\;\mathcal{D}'(\Omega),
\end{array}\right.
\end{equation}
which satisfies
\begin{equation}\label{CCter}
 \|w\|_{H_{0}^{1}(\Omega)}\leq Z_{\delta_{0}}.
\end{equation}
\end{theorem}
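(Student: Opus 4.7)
The plan is to follow the strategy announced in the Introduction: first introduce a family of approximate problems indexed by $k$ in which the quadratic-growth term $K_{\delta_0}(x,w,Dw)\,{\rm sign}(w)$ is replaced by its truncation $T_k(K_{\delta_0}(x,w,Dw))\,{\rm sign}(w)$ at height $k$, then apply Schauder's fixed point theorem on the closed ball $B_{Z_{\delta_0}}=\{v\in H_0^1(\Omega):\|v\|_{H_0^1}\leq Z_{\delta_0}\}$ to produce a solution $w_k$ of the truncated problem with $\|w_k\|_{H_0^1}\leq Z_{\delta_0}$, and finally pass to the limit $k\to+\infty$ to obtain the desired $w$.

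For the fixed-point step, given $v\in B_{Z_{\delta_0}}$, I would define $w=S(v)$ as the unique solution (produced by the Lax--Milgram theorem, coercivity being ensured by (\ref{A1}) together with Sobolev's inequality (\ref{2.7'})) of the linearised problem obtained by freezing in the nonlinearities every occurrence of the unknown at $v$. The crucial \emph{a priori} estimate on $w$ is obtained by using $w$ itself as test function. Since $\delta_0\geq\gamma$, property (\ref{def K2}) gives $K_{\delta_0}\geq 0$, so that the truncated quadratic term can be dropped after multiplication by ${\rm sign}(w)$ on the sign-coherent part and otherwise controlled; combining Sobolev's inequality on the $a_0 w^2$ and $|w|\,f$ terms with H\"older (in the conjugate exponent $q'$, see (\ref{b})) and the $|s|^{1+\theta}$ growth of $g_{\delta_0}$ on the last term, one arrives at an inequality of the shape
\begin{equation*}
\bigl(\alpha-C_N^2\|a_0\|_{N/2}-\gamma C_N^2\|f\|_{N/2}\bigr)\|Dw\|_2^2 \leq 2\|f\|_{H^{-1}(\Omega)}\|Dw\|_2 + G\,\|a_0\|_q\,\|Dv\|_2^{1+\theta}\|Dw\|_2,
\end{equation*}
which is exactly the inequality $\Phi_{\delta_0}(\|Dw\|_2)\leq 0$ governing the ``parabola-like'' function of Lemma \ref{lemma 2}. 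The two smallness hypotheses (\ref{A1}) and (\ref{A3}) are tailored so that $\Phi_{\delta_0}$ has a (double) zero at $Z_{\delta_0}$, which forces $\|w\|_{H_0^1}\leq Z_{\delta_0}$ and hence $S(B_{Z_{\delta_0}})\subset B_{Z_{\delta_0}}$. Continuity and compactness of $S$ follow from the compact embedding $H_0^1(\Omega)\hookrightarrow L^p(\Omega)$ for $p<2^\star$, from the continuity provided by Lemma \ref{lemma3 2}, and from Lebesgue's dominated convergence applied to the frozen nonlinearities, which are uniformly bounded thanks to the truncation $T_k$. Schauder's theorem then yields a fixed point $w_k$.

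The final step is the passage to the limit $k\to+\infty$. The uniform bound gives, along a subsequence, $w_k\rightharpoonup w^\star$ in $H_0^1(\Omega)$ (with $w^\star$ satisfying (\ref{CCter}) by weak lower semicontinuity), almost everywhere on $\Omega$, and strongly in every $L^p(\Omega)$ with $p<2^\star$. The main obstacle, and the heart of the proof, is to promote this to strong convergence in $H_0^1(\Omega)$, since the term $T_k(K_{\delta_0}(x,w_k,Dw_k))\,{\rm sign}(w_k)$ is only bounded in $L^1(\Omega)$ and weak convergence is not enough to pass to the limit in it. Following the classical strategy of \cite{bmp1}, \cite{fm} and \cite{fm1}, I would test the difference of the equations against an exponential-type function $\varphi(w_k-w^\star)=(w_k-w^\star)\exp\bigl(\beta(w_k-w^\star)^2\bigr)$ with $\beta>0$ large enough that $\alpha\varphi'(t)-(c_0+\delta_0)\|A\|_\infty|\varphi(t)|\geq \alpha/2$ for every $t\in\mathbb{R}$; this test function absorbs the quadratic part and yields $Dw_k\to Dw^\star$ almost everywhere and then strongly in $L^2(\Omega)^N$. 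Lemma \ref{lemma3 2} then gives the almost everywhere convergence of $K_{\delta_0}(x,w_k,Dw_k)\,{\rm sign}(w_k)$, and Fatou's lemma together with the nonnegativity of $K_{\delta_0}$ allows one to remove the truncation $T_k$ in the limit. The lower-order terms $(1+\delta_0|w_k|)f$, $a_0 w_k$ and $a_0 g_{\delta_0}(w_k)\,{\rm sign}(w_k)$ pass to the limit in $(L^{2^\star}(\Omega))'$ thanks to the $L^p$-compactness of $w_k$ and Vitali's theorem. Consequently $w^\star$ is a solution of (\ref{CCbis}) satisfying (\ref{CCter}), which completes the proof of Theorem \ref{theo34}.
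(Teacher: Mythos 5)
Your overall architecture coincides with the paper's: truncate the quadratic term at height $k$, apply Schauder's theorem on the ball $\{\|Dv\|_2\leq Z_{\delta_0}\}$ to get a solution $w_k$ of the truncated problem, then pass to the limit $k\to\infty$. The gap is in the limit step. You propose to obtain strong $H_0^1(\Omega)$ convergence by testing against $\varphi(w_k-w^\star)$ with $\varphi(t)=t\exp(\beta t^2)$. That test function is not admissible here: $w_k-w^\star$ belongs to $H_0^1(\Omega)$ but is not uniformly bounded, and for $N\geq 3$ an $H_0^1$ function can behave like $(\log|x|)^{1/2-\varepsilon}$ near a point, so $\exp\bigl(\beta(w_k-w^\star)^2\bigr)$ need not be integrable for the finite $\beta$ that your absorption inequality requires (exponential integrability of $H_0^1$ is a two-dimensional phenomenon). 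The paper circumvents exactly this difficulty by the $T_n/G_n$ splitting from \cite{bens}: Lemma~\ref{lem5.1} shows $\limsup_k\|DG_n(w_k)\|_2\to 0$ as $n\to\infty$, and Lemma~\ref{lem5.2} proves, for each \emph{fixed} $n$, that $T_n(w_k)\to T_n(w^\star)$ strongly by testing the equation for $w_k$ against $\psi(z_k)$ with $z_k=T_n(w_k)-T_n(w^\star)$ and $\psi(s)=s\exp\bigl(\tfrac{(c_0+\delta_0)^2}{4}s^2\bigr)$; the crucial point is that $\|z_k\|_\infty\leq 2n$, so $\psi(z_k)\in H_0^1(\Omega)\cap L^\infty(\Omega)$ is a legitimate test function. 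Your direct exponential trick works when the solutions are a priori in $L^\infty$ (the case $a_0\leq-\alpha_0<0$); here they are not, and one must localize first.

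Two secondary points. First, the paper writes the approximate problem (\ref{care}) with the regularized $\mathrm{sign}_k$ rather than $\mathrm{sign}$, so that the frozen problem (\ref{app}) has a bounded, \emph{continuous}, nondecreasing zeroth-order term $T_k(K_{\delta_0}(x,w,Dw))\,\mathrm{sign}_k(W)$ and is well posed by standard semilinear theory; with plain $\mathrm{sign}(W)$ the map is discontinuous and Lax--Milgram does not directly apply (and if instead you freeze $\mathrm{sign}$ at $v$ as your phrase ``every occurrence of the unknown'' suggests, then the nonnegativity of $K_{\delta_0}$ no longer lets you drop that term, and the estimate deteriorates in $k$). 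Second, the displayed a priori inequality is garbled (spurious factor $2$, $\gamma$ in place of $\delta_0$, and $\|Dv\|_2$ and $\|Dw\|_2$ mixed in a way that obscures the ball invariance), and the claim that it reads $\Phi_{\delta_0}(\|Dw\|_2)\leq 0$ is not right. The actual argument is that $\alpha\|Dw\|_2\leq\|f\|_{H^{-1}}+\delta_0C_N^2\|f\|_{N/2}\|Dv\|_2+C_N^2\|a_0\|_{N/2}\|Dv\|_2+GC_N^{2+\theta}\|a_0\|_q\|Dv\|_2^{1+\theta}$, the right-hand side is increasing in $\|Dv\|_2$, and at $\|Dv\|_2=Z_{\delta_0}$ it equals $\alpha Z_{\delta_0}+\Phi_{\delta_0}(Z_{\delta_0})=\alpha Z_{\delta_0}$ by (\ref{Phi delta0 bis}). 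These last two points are repairable; the inadmissible test function is the real obstruction.
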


\par The rest  of this paper will therefore be devoted to  the proof of Theorem \ref{theo34}. This will be done in two steps:
first, in Section 4 we will prove the existence
of a solution satisfying (\ref{CCter}) for a problem which
approximates (\ref{CCbis}), see Theorem 4.1;
second, in Section 5,  we will pass to the
 limit in this approximate problem and prove that for a subsequence the limit satisfies (\ref{CCbis}) and (\ref{CCter}).

%Existence of a solution for an approximate problem  Existence of a solution for an approximate
%problem Existence of a solution for an approximate problem

%===========================  Remark    3.9  ==============================

\begin{remark}\label{rmq3.9}
{\rm{In this Remark we assume that  (\ref{2.2}), (\ref{2.3}),
(\ref{2.6}), (\ref{2.5}),  (\ref{2.5bis})  and  (\ref{2.4}) hold
true. We also assume that the two smallness conditions (\ref{A1})
and  (\ref{A3}) hold true, and we  try to explain how
 these two  conditions  come from an ``a priori estimate'' that one can obtain on the solutions of (\ref{CC}).
 \\
\indent If $w_\delta $  is any  solution of (\ref{CC}), using
$T_k(w_\delta)\in H_{0}^1(\Omega)\cap L^\infty(\Omega)$ as test
function, where $T_{k}:\mathbb{R}\rightarrow\mathbb{R}$ is the usual
truncation at height $k$ defined by
\begin{equation}\label{tk}
T_{k}(s)= \left\{\begin{array}{ll}
- k &  \quad {\rm if}\; \;\; s \leq - k, \\
\; \; \; s &  \quad {\rm if} \;\;\; -k \leq s \leq + k,\\
+ k &  \quad {\rm if}\; \;\; + k \leq s,
\end{array}
\right.
\end{equation}
one has
\begin{equation}\label{3.40}
\left\{\begin{array}{l}
 \displaystyle\int _\Omega\,A(x)Dw_\delta\,
DT_k(w_\delta)+\int_\Omega\,K_\delta(x,w_\delta,Dw_\delta)|T_{k}(w_\delta)|=
\\
\\
= \displaystyle\int _\Omega\,f(x)\,T_{k}(w_\delta)+ \int
_\Omega\,\delta f(x) |w_\delta|\,T_{k}(w_\delta) +\int
_\Omega\,a_{0}(x)\, w_{\delta}\,T_{k}(w_\delta)+
\\
\\ \displaystyle +\int
_\Omega\,a_{0}(x)\, g_{\delta}(w_{\delta})\,|T_{k}(w_\delta)|.
\end{array}
\right.
\end{equation}
\par From now on we  assume in this Remark that $\delta$  satisfies
\begin{equation}\label{3.41}
\gamma\leq\delta\leq\delta_1,
\end{equation}
where $\delta_1$ is defined by (\ref{Ldelta1 bis}) (note that one has 
$\gamma  <\delta_1$, 
% a fact which results from (\ref{A1}), 
see  (\ref{6.18bis})).
\par Since $\delta\geq\gamma$  by (\ref{3.41}),  we deduce from (\ref{def
K2}) that  $K_\delta(x,s,\zeta)\geq 0$, and therefore that the
second term of the left-hand side of (\ref{3.40}) is nonnegative. 
We then use  in the first term of the left-hand side of (\ref{3.40})
the fact that  the matrix $A$ is coercive (see (\ref{2.3})),
in the first term of the right-hand side of (\ref{3.40})
 the fact that
 $f\in L^{N/2}(\Omega)$ (see (\ref{2.4})), which  implies that
$f\in H^{-1}(\Omega)$,   
in the second  and in the third terms of the right-hand side of
(\ref{3.40})
H\"{o}lder's inequality with
$$
\dfrac{1}{\frac{N}{2}}+\dfrac{1}{2^\star}+\dfrac{1}{2^\star}=1,
$$
and finally  in the fourth term of the right-hand side of (\ref{3.40})
the second statement of (\ref{6.17bis}), namely
\begin{equation}\label{6.15 ter}
0\leq g_{\delta}(t)< G|t|^{1+\theta},\;\;\; \forall t\in
\mathbb{R},\; t\neq0,
\;\;\;\forall\delta,\;0<\delta\leq\delta_{1},
\end{equation}
(note that here we use  $\delta\leq\delta_1$, see (\ref{3.41})) and
H\"{o}lder's inequality with
$$
\dfrac{1}{q}+\dfrac{1+\theta}{2^\star}+\dfrac{1}{2^\star}=1
$$
(which results from the definition (\ref{b}) of $\theta$). 
This allows us to obtain an estimate on $T_{k}(w_\delta)$,
in wich we pass to the limit in $k$ to get
\begin{equation}\label{6.15 quat}
\left\{\begin{array}{l} \alpha
\|Dw_\delta\|_2^2<\|f\|_{H^{-1}(\Omega)}\|Dw_\delta\|_2+\delta\|f\|_{N/2}\|w_{\delta}\|^2_{2^\star}
+ \|a_0\|_{N/2}\|w_{\delta}\|^2_{2^\star}
+G\|a_0\|_{q}\|w_{\delta}\|^{2+\theta}_{2^\star}\!,
\\ \\ 
{\rm{if}}\;w_{\delta}\neq0,
\end{array}
\right.
\end{equation}
(note that in view of (\ref{6.15 ter}), inequality (\ref{6.15 quat}) is actually 
a strict inequality). Using Sobolev's inequality (\ref{2.7'}) and
dividing by $\|Dw_\delta\|_2$ this implies that (even in the case
where $w_\delta=0$)
\begin{equation}\label{3.8}
\begin{array}{l}
\alpha \|Dw_\delta\|_2<\|f\|_{H^{-1}(\Omega)}+\delta
C_N^2\|f\|_{N/2}+C_N^2\|a_0\|_{N/2}+GC_N^{2+\theta}\|a_0\|_{q}\|Dw_\delta\|_{2}^{1+\theta}.
\end{array}
\end{equation}
\par In view of the definition (\ref{Phidelta bis}) of the function
$\Phi_\delta$ (see also Figure 2),  we have proved that if
$w_\delta$ is any solution of (\ref{CC}),  one has
\begin{equation}\label{3.43}
\Phi_\delta(\|Dw_\delta\|_2)>0
\;\;\; {\rm{if}}\;\;\; 
\gamma\leq\delta\leq\delta_1.
\end{equation}
\par But by the  definition of $\delta_0$, one has
$$
\Phi_\delta(X)>0,
\;\;\; \forall X\geq0,
\;\;\; \forall\delta, \; 0<\delta\leq\delta_{1},
$$
and therefore inequality (\ref{3.43}) does not imply  anything on
$\|Dw_\delta\|_2$ when $\delta$ satisfies
$\delta_0<\delta\leq\delta_1$. In contrast, when $\delta<\delta_0$,
the strict  inequality (\ref{3.43}) implies that
\begin{equation}\label{3.44}
{\rm{either}}\quad\|Dw_\delta\|_2<\ Y_{\delta}^-\quad
{\rm{or}}\quad\|Dw_\delta\|_2> Y_{\delta}^+\quad{\rm{if}}\quad
\delta<\delta_0,
\end{equation}
where $Y_{\delta}^- <Y_{\delta}^+$ are the two distinct zeros of the function
$\Phi_\delta$ (see Remarks \ref{rmq6.2bis} and  \ref{rmq6.4} and Figure~2), 
while when
$\delta=\delta_0$, the strict inequality (\ref{3.43}) implies that
\begin{equation}\label{3.40bis}
{\rm{either}}\quad\|Dw_{\delta_0}\|_2<Z_{\delta_0}\quad{\rm{or}}
\quad \|Dw_{\delta_0}\|_2>Z_{\delta_0}\quad{\rm
if}\;\;\;\delta=\delta_0.
\end{equation}
%%%%by the way note that according  to (\ref{w}), $w_{\delta_0}$ is nothing but the function
%%%$w$ defined by (\ref{3.333}), i.e. $w_{\delta_0}=w$.
\par  Inequalities  (\ref{3.44})  and (\ref{3.40bis}) are not a
priori estimates, since they do not imply any  bound on
$\|Dw_\delta\|_2$. Nevertheless these inequalities exclude the
closed interval $[Y_\delta^-\,,\,Y_\delta^+]$ or the point
$Z_{\delta_0}$ for $\|Dw_\delta\|_2$, and they give the hope to
prove the existence of a fixed point in the  set
$\|Dw_\delta\|_2\leq Y_\delta^-$,
 when $\delta<\delta_0$, or in the set  $\|Dw_{\delta_0}\|_2\leq Z_{\delta_0}$, when $\delta=\delta_0$.\\
 
 \vfill
\eject

\par These inequalities also explain where the two smallness conditions (\ref{A1}) and (\ref{A3}) come from.
Indeed (see Remark  \ref{rmq6.2bis}), 
these two smallness conditions imply that the value
$\delta_0$ of $\delta$ for which $\Phi_\delta$ has a double zero
satisfies $\delta_0\geq\gamma$, which is the case where, as said just above, some hope is\break
allowed.  \qquad $\Box$
%
%: indeed (\ref{A1}) ensures that (and actually is equivalent
%to the fact that) the minimum
%%two conditions (\ref{A1}) and (\ref{A3}) come from
% on $X\geq 0$ of $\Phi_\gamma(X)$ is attained for some
%$X=Z_\gamma>0$, and (\ref{A3}) then ensures that (and actually is
%equivalent to the fact that) $\Phi_\gamma(Z_\gamma)\leq 0$.
%
%(and
%actually is equivalent to the fact that) that $\Phi_\gamma(Z_\gamma)\leq 0$ .\\
%This implies that the value $\delta_0$ of $\delta$ such that
%$\Phi_{\delta_0}$ has a double zero satisfies $\delta_0>\gamma$.
%There is therefore a non empty set $\delta_0>\delta>\gamma$ where
%both (\ref{3.43}) and (\ref{3.44}) hold true.
} }
\end{remark}

%%============================Remark 3.10 =================
\begin{remark}\label{rmq 3.10}
{ \rm  In the present paper, we have chosen to prove the existence
of a function $w$ which is a solution of (\ref{CCbis}) (or
in other terms of a function $w=w_{\delta_0}$ which is a solution of
(\ref{CC}) with $\delta=\delta_0$)  which satisfies
$\|w\|_{H_0^1(\Omega)}\leq Z_{\delta_0}$
 (see  (\ref{CCter})). 
When $\gamma < \delta_0$, i.e. when the inequality (\ref{A3}) is a strict inequality, 
 we could as well have chosen to prove
  that for any fixed $\delta$ with $\gamma\leq\delta < \delta_0$, there exists
   a function $\hat{w}_\delta$ which is a solution of  (\ref{CC})  which satisfies
     $\|\hat{w}_\delta\|_{H_0^1(\Omega)}\leq Y_{\delta}^-$, where $Y_\delta^-$ is the smallest zero of the function $\Phi_\delta$
     (see Remark \ref{rmq6.4} and Figure 2): indeed the proofs made in Sections 4 and 5 continue to work in this framework
     and allow one to prove this result.
\\
\indent   In this framework, if we define,
for any fixed $\delta$ with $\gamma\leq\delta < \delta_0$,
the function $\hat{u}_\delta$ by
\begin{equation}\label{3.43 bis}
\hat{u}_\delta=\dfrac{1}{\delta}\log(1+\delta|\hat{w}_\delta|)\,{\rm
sign}(\hat{w}_\delta)
\end{equation}
(compare with the definition  (\ref{3.334}) of $u$, where
$\delta=\delta_0$),  the existence of a solution $\hat{w}_\delta$ 
 of  (\ref{CC})  which satisfies
     $\|\hat{w}_\delta\|_{H_0^1(\Omega)}\leq Y_{\delta}^-$  proves
(see the equivalence Theorem \ref{theoequiv}) that
 $\hat{u}_\delta$  defined by (\ref{3.43 bis}) is a solution of (\ref{2.1}) which
  satisfies 
  %(see (\ref{4.3}))
$$
\|e^{\delta|\hat{u}_\delta|}D\hat{u}_\delta\|_2=\|D\hat{w}_\delta\|_2\leq
Y_\delta^-.
$$
\indent But the function $u$ which is defined by (\ref{3.334})  from
the function $w$ given by Theorem \ref{theo34} satisfies (\ref{2.1})
(by the equivalence
   Theorem   \ref{theoequiv} with  $\delta=\delta_0$) and (see (\ref{4.3}) again)
$$
\|e^{\delta_0|u|}Du\|_2=\|Dw\|_2\leq Z_{\delta_0}.
$$
When $\delta<\delta_0$, we therefore have

$$
\|e^{\delta|u|}Du\|_2\leq\|e^{\delta_0|u|}Du\|_2\leq
Z_{\delta_0},\quad {\rm if}\quad\delta<\delta_0,
$$
and therefore $e^{\delta |u|}-1\in H^1_0(\Omega)$. By the
equivalence Theorem \ref{theoequiv}, the function
$\overline{w}_\delta$ defined from $u$ by
\begin{equation}\label{3.45}
\overline{w}_\delta=\dfrac{1}{\delta}(e^{\delta |u|}-1)\,{\rm
sign}(u)
\end{equation}
is  a solution of  (\ref{CC}). Moreover, since
$D\overline{w}_\delta=e^{\delta |u|}Du$ and since
$Z_{\delta_{0}}<Y_{\delta}^{+}$ for $\delta<\delta_0$ (see
(\ref{6.25})), one has in particular
$$
\|D\overline{w}_\delta\|_2<Y_{\delta}^{+},\quad{\rm if}\quad
\delta<\delta_0,
$$
which implies by (\ref{3.44}) that $\overline{w}_\delta$ satisfies

\begin{equation}\label{3.46}
\|D\overline{w}_\delta\|_2<Y_{\delta}^{-}.
\end{equation}
\par Therefore the result of Theorem \ref{theo34} (which is concerned
with the case $\delta=\delta_0$) provides us with a function $w$,
and  then with a function $u$ defined by (\ref{3.334}), and finally
with a function $\overline{w}_\delta$ defined by (\ref{3.45}) which
is a solution of
(\ref{CC}) and which satisfies (\ref{3.46}). 
This function
$\overline{w}_\delta$ is a solution $\hat{w}_\delta$ of (\ref{CC})
which satisfies $\|\hat{w}_\delta\|_{H^1_0(\Omega)}\leq Y_\delta^-$.
Therefore the result of Theorem \ref{theo34} (where
$\delta=\delta_0$) also provides us 
for every $\delta$ with $\gamma\leq\delta < \delta_0$ 
with a proof of the result stated in the
second paragraph  of the present Remark.
}
\qquad$\Box$
\end{remark}
%%\textbf{Rajouter des remarques dans l'introduction, dans la Section
%%2 et dans l'appendice citant les remarques 3.9 et 3.10}

%=======================  Section  4 ============================
%==========================================================

\section{Existence of a solution for an approximate problem}
In this Section we introduce an approximation (see (\ref{care})) of problem
(\ref{CCbis}). Under the two smallness
conditions  (\ref{A1}) and (\ref{A3}), we prove  by applying
Schauder's fixed point theorem that this approximate problem has at least one
solution which satisfies the estimate (\ref{CCter}).
\\
\par Let $\delta_{0}$ be defined by (\ref{3777bis}), (\ref{Phi delta0 bis}) and
(\ref{6.15bis}).  For any $k>0$,  we consider
 the approximate problem of finding a solution $w_k$ of (compare
 with  (\ref{CCbis}))
\begin{equation}\label{care}
\left\{\begin{array}{l}
 w_{k} \in H_0^{1}(\Omega),
\\
\\
-{\rm{div}}\,(A(x)Dw_{k})
+T_{k}(K_{\delta_{0}}(x,w_{k},Dw_{k}))\,{\rm sign}_{k}(w_{k})\,=
\\
\\
=(1+\delta_0|w_{k}|)\,f(x)
+a_{0}(x)\, w_{k}+a_{0}(x)\,g_{\delta_{0}}(w_{k})\,\textrm{sign}(w_{k})\;\;\;
\textrm{in} \;\;\;\mathcal{D}'(\Omega),
\end{array}
\right.
\end{equation}
 where  $T_k$ is the  usual truncation at height $k$ defined by
 (\ref{tk}) and where  \,${\rm sign}_k:\mathbb{R}\rightarrow\mathbb{R}$ is the
approximation of the function sign which is defined by
\begin{equation}\label{def sign k}
{\textrm{sign}}_{k}(s)=\left\{
\begin{array}{lc}
ks,& {\rm if}\;\;\; |s|\leq\dfrac{1}{k}, \\
&
\\
{\textrm{sign}}(s),&{\rm if}\;\;\; |s|\geq\dfrac{1}{k}.
\end{array}
\right.
\end{equation}
%==================================================== Thoreme  4.1 ======================================================
\begin{theorem}\label{propo}
Assume that \rm{(\ref{2.2}), (\ref{2.3}), (\ref{2.6}), (\ref{2.5}),
(\ref{2.5bis})  and (\ref{2.4})} \it{hold true}. Assume moreover the two smallness conditions
that {\rm (\ref{A1})} and {\rm (\ref{A3})} hold true.
 Let $\delta_{0}$ be defined in Lemma \ref{lemma 2}  (see {\rm(\ref{3777bis})} and {\rm(\ref{Phi delta0 bis})}), and let $k>0$ be fixed.
\par Then there exists at least one solution of {\rm (\ref{care})} such that
\begin{equation}\label{est'}
\| w_k\|_{H_{0}^{1}(\Omega)}\leq Z_{\delta_{0}},
\end{equation}
where $ Z_{\delta_{0}}$ is defined in Lemma {\rm \ref{lemma 2}}
 (see {\rm(\ref{3777bis})}, {\rm(\ref{Phi delta0 bis})} and {\rm(\ref{6.15bis})}).
\end{theorem}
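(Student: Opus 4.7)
The plan is to apply Schauder's fixed-point theorem on the ball $B=\{v\in H_0^1(\Omega) : \|v\|_{H_0^1(\Omega)} \le Z_{\delta_0}\}$, viewed as a closed convex subset of $L^p(\Omega)$ for some $p<2^*$, where it is relatively compact by the Rellich-Kondrachov embedding. For each $v\in B$, I would define $w=T(v)\in H_0^1(\Omega)$ as a solution of the nonlinear problem
\begin{equation*}
-\textrm{div}(A(x)Dw) + T_k(K_{\delta_0}(x,w,Dw))\,{\rm sign}_k(w) = (1+\delta_0|v|)f + a_0\, v + a_0\, g_{\delta_0}(v)\,{\rm sign}(v).
\end{equation*}
Since $\delta_0\ge\gamma$ gives $K_{\delta_0}\ge0$ by (\ref{def K2}) and the truncation $T_k$ caps it at $k$, the left-hand side is a coercive, pseudo-monotone operator on $H_0^1(\Omega)$; the right-hand side lies in $H^{-1}(\Omega)$ by Sobolev's embedding (\ref{2.7'}), H\"older's inequality with $\frac{1}{q}+\frac{1+\theta}{2^*}+\frac{1}{2^*}=1$ (from (\ref{b})), and the bound $g_{\delta_0}(v)\le G|v|^{1+\theta}$ of (\ref{6.15 ter}). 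Brezis's pseudo-monotone surjectivity theorem yields existence of $w$, and I select one such solution.

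\textbf{Invariance $T(B)\subset B$.} I would test the equation for $w$ with $w$ itself and discard the nonnegative quantity $\int_\Omega T_k(K_{\delta_0}(x,w,Dw))\,w\,{\rm sign}_k(w)\ge 0$ (both factors being nonnegative), then estimate each term on the right by Sobolev's inequality and H\"older's inequality (in particular $\int_\Omega a_0\, g_{\delta_0}(v)|w| \le GC_N^{2+\theta}\|a_0\|_q\|Dv\|_2^{1+\theta}\|Dw\|_2$), to obtain
\begin{equation*}
\alpha\|Dw\|_2 \le \|f\|_{H^{-1}(\Omega)} + \bigl(\delta_0 C_N^2\|f\|_{N/2} + C_N^2\|a_0\|_{N/2}\bigr)\|Dv\|_2 + GC_N^{2+\theta}\|a_0\|_q\|Dv\|_2^{1+\theta}.
\end{equation*}
Substituting $\|Dv\|_2\le Z_{\delta_0}$ and invoking the double-zero identity $\Phi_{\delta_0}(Z_{\delta_0})=0$, which rewrites as $\|f\|_{H^{-1}(\Omega)} + GC_N^{2+\theta}\|a_0\|_q Z_{\delta_0}^{1+\theta} = (\alpha-C_N^2\|a_0\|_{N/2}-\delta_0 C_N^2\|f\|_{N/2})Z_{\delta_0}$, the right-hand side telescopes to $\alpha Z_{\delta_0}$, giving $\|Dw\|_2\le Z_{\delta_0}$.

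\textbf{Continuity and conclusion.} For $v_n\to v$ in $L^p$ with $v_n\in B$, the data $F(v_n)$ (which involves only $v_n$, not $Dv_n$) converges strongly in $H^{-1}(\Omega)$. With $w_n = T(v_n)$ bounded in $H_0^1(\Omega)$ by the previous step, I extract, along a subsequence, $w_n\rightharpoonup w^\ast$ in $H_0^1(\Omega)$, $w_n\to w^\ast$ a.e., and $\chi_n := T_k(K_{\delta_0}(x,w_n,Dw_n)){\rm sign}_k(w_n)\rightharpoonup\chi$ weak-$*$ in $L^\infty(\Omega)$, with limit equation $-\textrm{div}(A Dw^\ast)+\chi = F(v)$. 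A Minty-type argument --- testing the $w_n$-equation with $w_n$ and comparing with the limit equation, using $\int\chi_n w_n\to\int\chi w^\ast$ (weak-$*$ times strong-$L^1$ convergence, since $w_n\to w^\ast$ strongly in $L^1(\Omega)$) --- forces $\int A Dw_n Dw_n\to\int A Dw^\ast Dw^\ast$, hence $w_n\to w^\ast$ strongly in $H_0^1(\Omega)$; then $Dw_n\to Dw^\ast$ a.e., and Lemma~\ref{lemma3 2} identifies $\chi = T_k(K_{\delta_0}(x,w^\ast,Dw^\ast)){\rm sign}_k(w^\ast)$, so $w^\ast$ is a solution of the equation defining $T(v)$. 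Schauder's fixed-point theorem (applied to a continuous single-valued selection of $T$, or alternatively Kakutani's theorem for the upper semicontinuous set-valued map) then produces a fixed point $w_k$ of $T$ in $B$, which by construction solves (\ref{care}) and satisfies $\|w_k\|_{H_0^1(\Omega)}\le Z_{\delta_0}$.

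\textbf{Main obstacle.} The crux of the argument is the Minty identification of the weak-$*$ limit $\chi$ inside the continuity step: it succeeds precisely because $\delta_0\ge\gamma$ ensures $K_{\delta_0}\ge 0$, which provides the ``good sign'' needed both in the invariance estimate (where the $\int T_k(K_{\delta_0})\,w\,{\rm sign}_k(w)$ term is simply dropped) and in the passage to the limit (where the Minty comparison relies on the nonnegative structure of the perturbation in the fully nonlinear problem defining $T(v)$).
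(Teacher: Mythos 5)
Your proposal uses a genuinely different fixed-point decomposition from the paper, and the difference is where the gap lies. The paper's map $S(w)=W$ freezes the argument $w$ inside the quadratic term, i.e.\ $W$ solves
\[
-\mathrm{div}(A\,DW)+T_k\!\big(K_{\delta_0}(x,w,Dw)\big)\,\mathrm{sign}_k(W)=(1+\delta_0|w|)f+a_0 w+a_0\,g_{\delta_0}(w)\,\mathrm{sign}(w),
\]
so the equation for $W$ is ``linear leading part plus a bounded, nondecreasing, zeroth-order absorption term $\hat b(x)\,\mathrm{sign}_k(W)$ with $\hat b\geq 0$'': the solution $W$ is \emph{unique}, and $S$ is a bona fide single-valued map. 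Your map $T(v)$ instead freezes $v$ only on the right-hand side and keeps the full nonlinearity $T_k(K_{\delta_0}(x,w,Dw))\,\mathrm{sign}_k(w)$ in the unknown $w$. Existence of at least one such $w$ is plausible (the operator can indeed be shown pseudo-monotone since the truncated term is bounded by $k$ and converges a.e.\ along strongly convergent subsequences by Lemma~\ref{lemma3 2}), but \emph{uniqueness} is not: testing the difference of two putative solutions $w_1,w_2$ with $w_1-w_2$ produces a term
\[
\int_\Omega\Big[T_k(K_{\delta_0}(x,w_1,Dw_1))\,\mathrm{sign}_k(w_1)-T_k(K_{\delta_0}(x,w_2,Dw_2))\,\mathrm{sign}_k(w_2)\Big](w_1-w_2)\,dx
\]
which has no sign, because $K_{\delta_0}$ depends on the gradients $Dw_i$ and there is no monotone structure in that dependence. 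So $T$ is a priori set-valued, and Schauder does not apply. Your fallback options do not repair this: Kakutani (or Bohnenblust--Karlin in Banach spaces) requires the images $T(v)$ to be \emph{convex}, and the solution set of a nonlinear PDE is not convex; a ``continuous single-valued selection'' of a set-valued map with nonconvex values is not something one can take for granted, and you give no argument for it. This is the concrete missing step.

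Two smaller points. First, in your Minty step you want $\int \chi_n w_n \to \int \chi w^\ast$; you justify this by ``weak-$*$ $L^\infty$ times strong $L^1$'', but $w_n\to w^\ast$ strongly in $L^1$ is not enough when the $L^\infty$ bound on $\chi_n$ is $k$ (which is fine here since $k$ is fixed, so actually this is all right, but it is worth noting that the more natural pairing is weak-$*$ $L^\infty$ against strong $L^1$, which is what you write, so this is actually correct). Second, working with $B$ as a subset of $L^p$, $p<2^\ast$, is a legitimate alternative to the paper's choice of $B\subset H_0^1(\Omega)$; the paper instead proves $S$ continuous from $H_0^1$ strong to $H_0^1$ strong and $S(B)$ precompact in $H_0^1$, which sidesteps the question of whether $B$ is closed in $L^p$ (it is, by weak lower semicontinuity of the $H_0^1$ norm, so this is fine either way).

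In summary: the invariance estimate and the Minty argument in your proposal are sound and essentially match the spirit of the paper's estimates, but the overall fixed-point scheme does not close because the map $T$ you define is not demonstrably single-valued (nor convex-valued), whereas the paper's split is designed precisely so that $S$ is single-valued. To repair your approach you would either need to prove uniqueness for the nonlinear problem defining $T(v)$ (which does not hold in general), or redesign the decomposition as in the paper.
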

%\begin{remark}
%{\rm Since for $\delta < \delta_0$, we have
%$$Y_\delta^{-}<Z_{\delta_{0}},$$
%(see  (\ref{aq}) below), estimate (\ref{est'}) implies that defining $u_{\delta,k}$ (that for simplicity we denote by $u_{k}$) by
%$$u_{k}=\dfrac{1}{\delta}\log(1+\delta|w_{k}|)\,{\rm{sign}}(w_{k}),
%$$
%estimate (\ref{2.24}) of Theorem \ref{theo} holds true for $u_k$.\qquad $\Box$
%}
%\end{remark}
%\par\medskip
\par
The proof of Theorem  \ref{propo} consists in applying Schauder's
fixed point theorem. First we prove the two following lemmas.
%=========================================================Lemmme 4.2   ============================
\begin{lemma}\label{lem}
Assume that \rm{(\ref{2.2}), (\ref{2.3}), (\ref{2.6}), (\ref{2.5}),
(\ref{2.5bis}) and (\ref{2.4})} \it{hold true}. Let $k>0$ be fixed.
\par Then, for any $w\in H_{0}^{1}(\Omega)$, there exists
a unique solution $W$ of the following semilinear problem
\\
\begin{equation}\label{app}
\left\{\begin{array}{l} W \in H_0^{1}(\Omega),
\\
\\
-{\rm{div}}(A(x)DW)+T_{k}(K_{\delta_{0}}(x,w,Dw))\,{\rm{
sign}}_{k}(W)\,=
\\
\\
=(1+\delta_{0}|w|)\, f(x)+a_{0}(x)\, w+
a_{0}(x)\, g_{\delta_{0}}(w)\,{\rm{sign}}(w)\;\;\;{in}\;\;\;\mathcal{D'}(\Omega).
\end{array}
\right.
\end{equation}
Moreover $W$ satisfies
\begin{equation} \label{estim}
\alpha\|DW\|_{2} \leq\|f\|_{H^{-1}(\Omega)}+\delta_{0}
C_{N}^{2}\|f\|_{N/2}\|D w\|_{2}+ C_{N}^{2}\|a_{0}\|_{N/2}\|D
w\|_{2}+GC_{N}^{2+\theta}\|a_{0}\|_{q}\|D w\|_{2}^{1+\theta},
 \end{equation}
where $C_N$ and $G$ are the constants given by {\rm (2.9)} and {\rm
(\ref{G bis})}.
\end{lemma}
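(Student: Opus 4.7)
The plan is to reduce the problem to a standard monotone-operator argument, exploiting the fact that, for each fixed $w$, the coefficient $T_k(K_{\delta_0}(x,w,Dw))$ is a nonnegative bounded measurable function of $x$ alone, and that $\mathrm{sign}_k$ is a bounded nondecreasing Carath\'eodory function. First I would check that every term of the right-hand side of (\ref{app}) defines an element of $H^{-1}(\Omega)$. The term $(1+\delta_0|w|)f$ lies in $L^{(2^*)'}(\Omega)$ by H\"older applied to $w\in L^{2^*}$ and $f\in L^{N/2}$; the term $a_0 w$ lies in $L^{(2^*)'}$ because $q>N/2$. Finally, using the polynomial bound $|g_{\delta_0}(w)|\leq G|w|^{1+\theta}$ given in (\ref{6.15 ter}), the term $a_0 g_{\delta_0}(w)\mathrm{sign}(w)$ lies in $L^{(2^*)'}$ via the H\"older triple $(q,2^*/(1+\theta),2^*)$, whose reciprocals sum to $1$ precisely because of the definition $\theta=2^*/q'-2$.

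Second, since $\delta_0\geq\gamma$, (\ref{def K2}) yields $K_{\delta_0}(x,w,Dw)\geq 0$ almost everywhere, hence $0\leq T_k(K_{\delta_0}(x,w,Dw))\leq k$. The map
$$\mathcal{A}:W\mapsto -\mathrm{div}(A(x)DW)+T_k(K_{\delta_0}(x,w,Dw))\,\mathrm{sign}_k(W)$$
is therefore a bounded, hemicontinuous operator from $H_0^1(\Omega)$ into $H^{-1}(\Omega)$. It is monotone, since the zeroth-order part is the product of a nonnegative $L^\infty$-coefficient by a nondecreasing function of $W$, and strictly coercive via the coercivity (\ref{2.3}) of $A$. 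Existence and uniqueness of a solution $W\in H_0^1(\Omega)$ of (\ref{app}) then follow from the classical Minty--Browder theorem for monotone coercive operators on a reflexive Banach space, the strict monotonicity of the leading term yielding uniqueness.

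Third, for the estimate (\ref{estim}) I would use $W$ itself as test function. Since $\mathrm{sign}_k(W)\,W\geq 0$ pointwise and $T_k(K_{\delta_0}(x,w,Dw))\geq 0$, the zeroth-order contribution on the left-hand side is nonnegative and can be dropped, leaving at least $\alpha\|DW\|_2^2$. On the right-hand side I bound $\int fW$ by $\|f\|_{H^{-1}(\Omega)}\|DW\|_2$ by duality; I bound $\delta_0\int|w|fW$ and $\int a_0 wW$ by H\"older with exponents $(N/2,2^*,2^*)$ and Sobolev's inequality (\ref{2.7'}), producing the terms $\delta_0 C_N^2\|f\|_{N/2}\|Dw\|_2\|DW\|_2$ and $C_N^2\|a_0\|_{N/2}\|Dw\|_2\|DW\|_2$; and I bound the $g_{\delta_0}$ term by combining (\ref{6.15 ter}) with H\"older of exponents $(q,2^*/(1+\theta),2^*)$ and (\ref{2.7'}), yielding $GC_N^{2+\theta}\|a_0\|_q\|Dw\|_2^{1+\theta}\|DW\|_2$. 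Dividing by $\|DW\|_2$ produces (\ref{estim}) exactly. The main obstacle is conceptually modest and essentially bookkeeping: the crucial structural fact is that $\delta_0\geq\gamma$ forces $K_{\delta_0}\geq 0$, which simultaneously makes the nonlinear zeroth-order term monotone in $W$ (giving existence/uniqueness) and harmless when tested against $W$ (giving the energy estimate), while the exponent identity $\theta=2^*/q'-2$ is exactly what is needed for the $g_{\delta_0}$ term to fit into $H^{-1}$ and to produce the correct power $\|Dw\|_2^{1+\theta}$.
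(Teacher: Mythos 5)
Your proposal is correct and follows essentially the same route as the paper: you identify the problem as $-\mathrm{div}(A(x)DW)+\hat b(x)\,\mathrm{sign}_k(W)=\hat F(x)$ with $\hat b=T_k(K_{\delta_0}(x,w,Dw))\geq 0$ bounded and $\hat F\in H^{-1}(\Omega)$, appeal to monotone-operator theory for existence and uniqueness, and then test with $W$, dropping the nonnegative zeroth-order contribution and bounding the right-hand side by the same H\"older--Sobolev estimates. The only cosmetic difference is that you invoke Minty--Browder by name where the paper simply asserts the standard existence-uniqueness result.
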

\begin{proof}
Problem (\ref{app}) is of the form
\begin{equation}\label{app'}
\left\{\begin{array}{l} W \in H_0^{1}(\Omega),
\\
\\
-\textrm{div}\,(A(x)DW)+\hat b(x)\,{\rm{
sign}}_{k}(W)=\widehat{F}(x)\;\;\;\rm{in }\; \;\;\mathcal{
D}'(\Omega),
\end{array}
\right.
\end{equation}
where $\hat{b}(x)$ and $\hat{F}(x)$ are given. Since $\hat
b(x)=T_{k}({K}_{\delta_{0}}(x,w,Dw))$ belongs to $ L^\infty(\Omega)$
and is nonnegative in view of (\ref{def K2}) and of $\delta_0\geq
\gamma$ (see (\ref{3777bis})), since the function ${\rm{ sign}}_{k}$ is continuous and nondecreasing, 
and since $\widehat{F}$ belong to $H^{-1}(\Omega)$ (see
e.g. the computation which allows one to obtain (\ref{4.9}) below),
this problem has a unique solution.
\\
\par Since $W \in H_0^{1}(\Omega)$, the use of $W$ as a test function in (\ref{app}) is licit.
Since $T_{k}({K}_{\delta}(x,t,\zeta))$ is nonnegative, this gives
\begin{equation}\label{detail}
\left\{\begin{array}{l} \displaystyle\int_\Omega\,A(x)\,DWDW
\,dx\leq
\\
\\
\displaystyle\leq \int_\Omega\,(1+{\delta}_{0}|w|)\,f(x)\,W
dx\,+\int_\Omega\,a_{0}(x)\, w\,W
dx\,+\int_\Omega\,a_{0}(x)\,g_{{\delta}_{0}}(w)\, {\rm{sign}}(w)\,W
dx.
\end{array}
\right.
\end{equation}
\noindent As in the computation made in Remark \ref{rmq3.9} to
obtain the 
%``a priori estimate'' 
inequality (\ref{6.15 quat}), we use in
(\ref{detail}) the coercivity (\ref{2.3}) of the matrix $A$,
H\"{o}lder's inequality with
%$\dfrac{1}{\frac{N}{2}}+\dfrac{2}{2^{\star}}=1$, inequality
$\dfrac{1}{\frac{N}{2}}+\dfrac{1}{2^{\star}}+\dfrac{1}{2^{\star}}=1$, inequality
(\ref{6.16})
 on $g_{{\delta}_{0}}$,
%the definition (\ref{G bis}) of the constant $G$ together with the fact that $\delta_0<\delta_1$ (see (\ref{3777bis})),
%H\"{o}lder's inequality with
$\dfrac{1}{q}+\dfrac{1+\theta}{2^\star}+\dfrac{1}{2^\star}=1$ (which
results from the definition (\ref{b}) of $\theta$), and finally
Sobolev's inequality (\ref{2.7'}). We obtain
\begin{equation}\label{4.9}
\left\{\begin{array}{l}
 \displaystyle \alpha\|DW\|_{2}^{2}\leq
\|f\|_{H^{-1}(\Omega)}\|DW\|_2+{\delta}_{0}\|f\|_{N/2}\|w\|_{2^{\star}}\|W\|_{2^{\star}}+
\|a_0\|_{N/2}\|w\|_{2^{\star}}\|W\|_{2^{\star}}+
\\
\\

\displaystyle +
G\,\|a_0\|_{q}\|w\|_{2^{\star}}^{1+\theta}\|W\|_{2^{\star}}\leq
\\
\\
\displaystyle \leq |f\|_{H^{-1}(\Omega)}\|DW\|_2+ {\delta}_{0}
C_{N}^{2}\|f\|_{N/2}\|Dw\|_{2}\|DW\|_{2}+
C_{N}^{2}\|a_0\|_{N/2}\|Dw\|_{2}\|DW\|_{2}+
\\
\\
+\,GC_{N}^{2+\theta}\,\|a_0\|_{q}\|Dw\|_{2}^{1+\theta}\|DW\|_{2},
\end{array}
\right.
\end{equation}
which immediately implies  (\ref{estim}).
\qquad$\Box$
\end{proof}
\\
%==================================================  Lemme  4.3  ================================================
\begin{lemma}\label{lem 436}
Assume that \rm{(\ref{2.2}), (\ref{2.3}), (\ref{2.6}), (\ref{2.5}),
(\ref{2.5bis}) and (\ref{2.4})} \it{hold true}. Let $k>0$ be fixed.
\par
Let $w_n$ be a sequence such that
\begin{equation}\label{4.100}
w_n\rightharpoonup w \;\;\;{ in\; }H_{0}^{1}(\Omega){\;\;\;
weakly\;\;\;and\;\;\; a.e.\;\;\; in}\;\;\;\Omega.
\end{equation}
Define $W_n$ as the unique solution of  {\rm (\ref{app})} for
$w=w_n$, i.e.
\begin{equation}\label{continuous}
\left\{\begin{array}{l} \displaystyle W_n \in H_0^{1}(\Omega),
\\
\\
-\textrm{div}\,(A(x)DW_n)+T_{k}(K_{\delta_{0}}(x,w_n,Dw_n))\,{\rm{
sign}}_{k}(W_n)=
\\
\\
=(1+\delta_{0}|w_n|)\,f(x)+ a_{0}(x)\,w_n+
a_{0}(x)\,g_{\delta_{0}}(w_n)\,{\rm{sign}}(w_n)\;\;\;\textrm{ in
}\;\;\;\mathcal{D'}(\Omega).
\end{array}
\right.
\end{equation}
Assume moreover that for a subsequence, still denoted by $n$, and for some $W^\star\in H_{0}^{1}(\Omega)$, $W_n$
satisfies
\begin{equation}\label{4.101}
W_n\rightharpoonup W^\star \;\;\;{\rm in\;\;\;
}H_{0}^{1}(\Omega)\;\;\;{ weakly}\;\;\;{ and\;\;\; a.e.\;\;\;
in}\;\;\;\Omega.
\end{equation}
\par Then for the same subsequence one has
\begin{equation}\label{4.13bis}
W_n\rightarrow W^\star {\;\;\; in\;\;\; }H_{0}^{1}(\Omega){\;\;\;
strongly}.
\end{equation}
\end{lemma}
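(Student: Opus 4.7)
\textbf{Proof proposal for Lemma~\ref{lem 436}.} The plan is to use $\varphi = W_n - W^\star \in H_0^1(\Omega)$ as a test function in equation (\ref{continuous}). Doing so and splitting $DW_n = D(W_n - W^\star) + DW^\star$ in the principal part yields
\begin{equation*}
\int_\Omega A(x) D(W_n - W^\star) D(W_n - W^\star) = I_n^{(1)} + I_n^{(2)} + I_n^{(3)},
\end{equation*}
where $I_n^{(1)} = -\int_\Omega A(x) DW^\star D(W_n - W^\star)$, $I_n^{(2)} = -\int_\Omega T_k(K_{\delta_0}(x, w_n, Dw_n)) \, \textrm{sign}_k(W_n) (W_n - W^\star)$, and $I_n^{(3)}$ is the integral against the right-hand side of (\ref{continuous}). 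The goal is to prove that each of $I_n^{(1)}$, $I_n^{(2)}$, $I_n^{(3)}$ tends to zero, and then to conclude by the coercivity (\ref{2.3}) of $A$, which yields $\alpha \|D(W_n - W^\star)\|_2^2 \to 0$.

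The term $I_n^{(1)}$ tends to $0$ by the weak convergence $D(W_n - W^\star) \rightharpoonup 0$ in $L^2(\Omega)^N$ assumed in (\ref{4.101}), since $A(x) DW^\star$ is a fixed element of $L^2(\Omega)^N$. For $I_n^{(2)}$, the crucial observation is that the truncation provides the bound $|T_k(K_{\delta_0}(x,w_n,Dw_n)) \, \textrm{sign}_k(W_n)| \leq k$ pointwise, so Rellich's compactness theorem (which gives $W_n \to W^\star$ strongly in $L^2(\Omega)$) implies $|I_n^{(2)}| \leq k \, |\Omega|^{1/2} \|W_n - W^\star\|_2 \to 0$. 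Here the specific form of $K_{\delta_0}$ is irrelevant; only the truncation at height $k$ matters. This is what allows us to work at the approximation level without requiring any a.e. convergence of $Dw_n$.

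The main work is to show $I_n^{(3)} \to 0$. Since $w_n \to w$ strongly in $L^p(\Omega)$ for every $p < 2^\star$ and a.e., and since $g_{\delta_0}(t)\,\textrm{sign}(t)$ is a continuous function of $t$ with growth bounded by $G|t|^{1+\theta}$ (Remark \ref{rmq3.3} and (\ref{6.15 ter})), one has the a.e. convergences $(1+\delta_0|w_n|)f \to (1+\delta_0|w|)f$, $a_0 w_n \to a_0 w$ and $a_0 g_{\delta_0}(w_n)\,\textrm{sign}(w_n) \to a_0 g_{\delta_0}(w)\,\textrm{sign}(w)$. The three H\"older triples used in (\ref{4.9}), namely $\tfrac{1}{N/2} + \tfrac{1}{2^\star} + \tfrac{1}{2^\star} = 1$, the same with $a_0 \in L^q \subset L^{N/2}$, and $\tfrac{1}{q} + \tfrac{1+\theta}{2^\star} + \tfrac{1}{2^\star} = 1$, combined with the absolute continuity of the integrals $\|f\|_{L^{N/2}(E)}$ and $\|a_0\|_{L^q(E)}$ and the $H_0^1$-boundedness of $w_n$ and of $W_n - W^\star$, prove the equiintegrability of each integrand. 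Together with the a.e. convergence $W_n - W^\star \to 0$, Vitali's theorem then yields $I_n^{(3)} \to 0$ term by term; the constant-in-$n$ piece $\int_\Omega f(W_n - W^\star)$ is handled by the duality pairing $L^{N/2}(\Omega) \subset H^{-1}(\Omega)$ against the weakly vanishing sequence $W_n - W^\star$.

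The main obstacle is the last step: unlike the principal-part and truncated-coefficient terms, controlling $I_n^{(3)}$ requires the equiintegrability argument via Vitali, which in turn relies on the precise H\"older exponents and on the fact that the growth $|t|^{1+\theta}$ of $g_{\delta_0}$ is genuinely subcritical in $2^\star$ (i.e., $\theta < 1$, see (\ref{c})). This is precisely where the assumption (\ref{2.5bis}) on $q$, through the definition (\ref{b}) of $\theta$, enters the passage to the limit.
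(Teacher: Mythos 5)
Your proposal is correct and follows essentially the same route as the paper: take $W_n - W^\star$ as test function, use weak convergence for the principal part, the $L^\infty$ bound $|T_k(\cdot)\,\mathrm{sign}_k(\cdot)| \leq k$ together with Rellich for the truncated term, and a.e. convergence plus H\"older-based equiintegrability (Vitali) for the right-hand side. The only cosmetic difference is that you peel off $\int_\Omega f\,(W_n - W^\star)$ and treat it by $H^{-1}$--$H^1_0$ duality, whereas the paper folds it into the same equiintegrability argument as the other right-hand-side terms; both are fine.
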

\proof Since $ W_n- W^\star \in H_{0}^{1}(\Omega)$,  the use of
$(W_n-W^{\star})$ as test function in (\ref{continuous}) is licit.
This gives

\begin{equation}\label{411b}
\left\{\begin{array}{l}
\displaystyle\int_{\Omega}\,A(x)D(W_n-W^{\star})D(W_n-W^{\star})\,dx=
\\
\\
\displaystyle =
-\int_{\Omega}\,A(x)DW^{\star}D(W_n-W^{\star})\,dx\,+
\\
\\
\displaystyle
-\int_{\Omega}\,T_{k}\left(K_{\delta_{0}}(x,w_n,Dw_n)\right)\,\textrm{sign}_k(W_n)\,(W_n-W^{\star})\,dx\,+
\\
\\
\displaystyle
+\int_{\Omega}\,(1+\delta_0|w_n|)f(x)\,(W_n-W^{\star})\,dx
\,+\int_{\Omega}\,a_{0}(x)\,w_n (W_n-W^{\star})\,dx\,+
\\
\\
\displaystyle +\int_{\Omega}\;a_{0}(x)\, g_{\delta_0}(w_n)
{\,\textrm{sign}}(w_n)\,(W_n-W^{\star})\,dx.
\end{array}
\right.
\end{equation}
\par We claim that every term of the right-hand of (\ref{411b}) tends to zero as $n$ tends to infinity.
\par For the first term, we just use the fact that $W_n-W^{\star}$ tends to zero in $H_{0}^{1}(\Omega)$ weakly.
\par For the second term,  we use the fact $T_{k}(K_{\delta_0} (x,w_n,Dw_n))\,{\rm sign}_k(W_n)$ is bounded in
$L^{\infty}(\Omega)$,  since $k$ is fixed, while $W_n- W^{\star}$
tends to zero in $L^{1}(\Omega)$ strongly.
\par For the last three terms we observe that, 
since $w_n$ and $W_n$  respectively converge almost everywhere to $w$ and to $W^*$ 
(see (\ref{4.100}) and (\ref{4.101})), we have
\begin{equation}
\left\{\begin{array}{ll}\label{4.15}
 (1+\delta_0 |w_{n}|)\, f(x)\, (W_{n}- W^{\star}) \rightarrow 0 & \;\;\;{\rm{a.e.\;\;\; in }}\;\;\;\Omega,\\
 &  \\

a_{0}(x)\,w_{n}\,(W_{n}- W^{\star}) \rightarrow 0 & \;\;\;{\rm{a.e.\;\;\; in }}\;\;\;\Omega, \\
 &  \\
a_{0}(x)\,g_{\delta_0}(w_{n})\,{\rm{sign}}(w_{n})\,(W_{n}-
W^{\star})  \rightarrow 0  &\;\;\;{\rm{a.e.\;\;\; in }}\;\;\;\Omega.
 \end{array}
\right.
\end{equation}
\\
\par We will now prove that each of the three sequences which appear
in (\ref{4.15}) are equiintegrable. Together with (\ref{4.15}),
 this will imply that these sequences converge to zero in $L^{1}(\Omega)$ strongly,  and this will prove that the three
 last terms of the right-hand side of
  (\ref{411b}) tend to zero as $n$ tends to infinity.
  
\vfill
\eject  
  
\par In order to prove that  the sequence $(1+\delta_0 |w_{n}|)\,f(x)\,(W_{n}-
W^{\star})$ is  equiintegrable, we use H\"{o}lder's
inequality with $\dfrac{1}{2^\star} + \dfrac{1}{\frac{N}{2}}+\dfrac{1}{2^\star}=1$. For
any measurable set $E$, $E\subset\Omega$, we have
\begin{equation*}
\left\{\begin{array}{l}
\displaystyle\int_{E}\,|(1+\delta |w_{n}|)\,f(x)\,\,(W_{n}- W^{\star})|\,dx\leq \\
\\
\leq
\displaystyle\left(\int_{E}\,|f(x)|^{N/2}dx\right)^{2/N}\,\|(1+\delta_0
|w_{n}|)\|_{2^{\star}}\,\|W_{n}- W^{\star}\|_{2^{\star}}
 \leq c\left(\int_{E}\,|f(x)|^{N/2}\,dx\right)^{2/N},
 \end{array}
 \right.
\end{equation*}
\noindent where $c$ denotes a constant which is independent of $n$.
\\
\indent Proving that the sequence $a_{0}(x)\,w_{n}\,(W_{n}-
W^{\star})$ is  equiintegrable is similar, since for
%%, we use H\"{o}lder's inequality with
%%$\dfrac{1}{N/2}+\dfrac{2}{2^\star}=1$. For
any measurable set $E$, $E\subset\Omega$, we have
\begin{equation*}
\left\{\begin{array}{l}
\displaystyle\int_{E}\,|a_{0}(x)\,w_{n}\,(W_{n}- W^{\star})|\,dx
\,\leq
\\
\\
\leq\displaystyle\left(\int_{E}\,|a_{0}(x)|
^{N/2}\,dx\right)^{2/N}\,\|w_{n}\|_{2^{\star}}\,\|W_{n}-
W^{\star}\|_{2^{\star}}
 \displaystyle\leq
 c\left(\int_{E}\,|a_{0}(x)|^{N/2}\,dx\right)^{2/N}.
 \end{array}
 \right.
\end{equation*}
%since $a_0\in L^q(\Omega)\subset L^{N/2}(\Omega)$ and since $w_n$
%and $W_n$ are bounded in $L^{2^\star}\! (\Omega)$.
%\\
%\noindent Thus Vitali's theorem implies that:
%$$
%a_{0}(x)\,w_{n}\,(W_{n}- W^{\star}) \rightarrow 0 \qquad{\rm{strongly\; in }}\;L^{1}(\Omega). \\
%$$
\\
\indent  Finally, in order to prove that the sequence
 $a_{0}(x)\,g_{\delta_0}(w_{n})\,{\rm{sign}}(w_{n})\,(W_{n}- W^{\star})$  is  equiintegrable,
 we use as in (\ref{4.9}) inequality (\ref{6.16}) and H\"{o}lder's inequality with $\dfrac{1}{q}+\dfrac{1+\theta}{2^{\star}}
 +\dfrac{1}{2^{\star}}=1$; for any measurable set $E$, $E\subset\Omega$, we have
\begin{equation*}
\left\{\begin{array}{l}
\displaystyle\int_{E}\,|a_{0}(x)\,g_{\delta_0}(w_{n})\,{\rm{sign}}(w_{n})\,(W_{n}-
W^{\star})|\,dx\leq
\int_{E}\,|a_{0}(x)| \, G \, |w_{n}|^{1+\theta} \, |W_{n}- W^{\star}|\;dx\;\leq\\
\\
\leq
\displaystyle\left(\int_{E}\,|a_{0}(x)|^{q}\,dx\right)^{1/q}G\,\|w_{n}\|^{1+\theta}_{2^{\star}}\,\|W_{n}-
W^{\star}\|_{2^{\star}}\leq c\left(\int_{E}\,|a_{0}(x)|^{q}\,dx\right)^{1/q}.\\
  \end{array}
 \right.
\end{equation*}
%\noindent Thus Vitali's theorem implies that:
%$$
%a_{0}(x)\,g_{\delta}(w_{n})\;{\rm{sign}}(w_{n})\,(W_{n}- W^{\star})\rightarrow 0 \qquad{\rm{strongly\; in }}\;L^{1}(\Omega).
%$$
%Finally, to prove the uniform equiintegrable of the function
\\
\indent We have proved that the right-hand side of (\ref{411b})
tends to zero. Since the matrix $A$ is coercive (see (\ref{2.3})),
this proves that $W_{n}$ tends to $W^{\star}$ in $H_{0}^{1}(\Omega)$
strongly. Lemma  \ref{lem 436} is proved.
\qquad$\Box$
\\
\\
%=============================================== Proof of theorem    4.1   =====================================
%\subsection*{Proof of Theorem \ref{propo}}
\noindent
\begin{proof}[Proof of Theorem \ref{propo}]
%Proof of Theorem \ref{propo}\;
Recall that  in this Theorem $k>0$ is fixed.
\\
 \indent Consider the ball $B$ of
$H_{0}^{1}(\Omega)$ defined by
\begin{equation}\label{B}
B =\{w\in H_{0}^{1}(\Omega):\|Dw\|_{2}\leq Z_{\delta_{0}}\},
\end{equation}
where $Z_{\delta_{0}}$ is defined from $\delta_{0}$  by
(\ref{6.15bis}).
\par Consider also the mapping
$S:H_0^{1}(\Omega)\rightarrow H_0^{1}(\Omega)$ defined by
\begin{equation}\label{S}
S(w)=W,
\end{equation}
where for every $w\in H_0^1(\Omega)$, $W$ is the unique solution of
(\ref{app}) (see Lemma \ref{lem}) .
\\
\par We will apply Schauder's fixed point theorem in the Hilbert space $H_{0}^{1}(\Omega)$ to the mapping  $S$ and to the ball $B$.
\\
\\
\noindent{\textbf{First step}} 
In this step we prove that $S$ maps $B$  into itself.
\par Indeed by Lemma \ref{lem}, $W=S(w)$ satisfies (\ref{estim});
therefore, when $\|Dw\|_{2}\leq Z_{{\delta}_{0}}$, one has, in view
of the definition (\ref{Phidelta bis}) the function $\Phi_{\delta}$
and of the property (\ref{Phi delta0 bis}) of $Z_{\delta_0}$,
\begin{equation}
\left\{\begin{array}{l} \alpha\|DW\|_{2} \leq
\\
\\
\leq \|f\|_{H^{-1}(\Omega)}+\delta_{0} C_{N}^{2}\|f\|_{N/2}\|D
w\|_{2}+ C_{N}^{2}\|a_{0}\|_{N/2}\|D
w\|_{2}+GC_{N}^{2+\theta}\|a_{0}\|_{q}\|D w\|_{2}^{1+\theta}\leq
\\
\\
\leq \|f\|_{H^{-1}(\Omega)}+\delta_{0}
C_{N}^{2}\|f\|_{N/2}\,Z_{{\delta}_{0}} +
C_{N}^{2}\|a_{0}\|_{N/2}Z_{{\delta}_{0}}+GC_{N}^{2+\theta}\|a_{0}\|_{q}\,Z_{{\delta}_{0}}^{1+\theta}
\,=
\\
\\
=\alpha Z_{{\delta}_{0}} +\Phi_{\delta_{0}}(Z_{{\delta}_{0}})=\alpha
Z_{{\delta}_{0}},
 \end{array}
\right.
\end{equation}
i.e. $\|DW\|_{2}\leq Z_{{\delta}_{0}}$, or in other terms $W\in B$,
which proves that $S(B)\subset B$.
\\
\\
\noindent{\textbf{Second step}}  
In this step we prove that $S$ is
continuous from $H_{0}^{1}(\Omega)$ strongly into
$H_{0}^{1}(\Omega)$ strongly.
\par  For this we consider  a sequence such that
\begin{equation}\label{4.137}
w_n\in B,\;\;\;w_n\rightarrow w\;\;\;{\rm
in}\;\;\;H_{0}^{1}(\Omega)\;\;\;{\rm strongly},
\end{equation}
and we define $W_n$ as $W_n=S(w_n)$, i.e. as the solution of
(\ref{continuous}).
\par
The functions $w_n$ belong to $B$, and therefore the functions $W_n$
belong to $B$ in view of the first step. We can therefore extract a
subsequence, still denoted by $n$, such that for some $W^\star\in
H_{0}^{1}(\Omega)$,
\begin{equation}\label{4.139}
W_n\rightharpoonup W^\star {\;\;\;\rm in\;
}H_{0}^{1}(\Omega){\;\;\;\rm weakly}\;\;\;{\rm and\;\;\; a.e.\;\;\;
in}\;\;\;\Omega.
\end{equation}
\noindent We can moreover assume that for a further subsequence, still denoted by $n$,
\begin{equation}\label{4.139bis}
w_n\rightharpoonup w {\;\;\;\rm a.e.\;\;\;in}\;\;\; \Omega\;\;\;{\rm
and } \;\;\;Dw_n\rightharpoonup Dw {\;\;\;\rm a.e.\;\;\;in}\;\;\;
 \Omega.
\end{equation}
 Since the assumptions of Lemma \ref{lem 436} are satisfied
by the subsequences $w_n$ and $W_n$, the subsequence $W_n$ converges
to $W^\star$ in $H^1_0 (\Omega)$ strongly.
 \par We  now pass to the limit in equation (\ref{continuous})  as $n$ tends to infinity by using the fact that ${\textrm{sign}}_{k}(s)$
 and $g_{\delta_0}(s)\,{\rm{sign}}(s)$ are Carath\'{e}odory functions, and the first result of (\ref{consconv}) as far
  as $T_{k}(K_{\delta_0}(x,w_n,Dw_n))$ is concerned (this point is the only point of the proof of Theorem \ref{propo}
  where the assumption of strong $H^1_0 (\Omega)$ convergence in (\ref{4.137}),
  or more exactly its consequence (\ref{4.139bis}), is used).
This implies that $W^{\star}$ is
  a solution of (\ref{app}). Since the solution of (\ref{app}) is unique, one has  $W^{\star}=S(w)$.
\par In view of the fact that $W^{\star}$ is uniquely determined, we conclude that it was not necessary to extract a
 subsequence in (\ref{4.139}) and (\ref{4.139bis}), and that the whole sequence $W_n=S(w_n)$ converges in $H_{0}^{1}(\Omega)$ strongly to
  $W^{\star}=S(w)$. This proves the continuity of the application $S$.
%\noindent Thus, Vitali's theorem implies that
%$$(1+\delta |w_{n}|)\,f\,\,(W_{n}- W^{\star})\rightarrow 0 \qquad{\rm{strongly\; in }}\;L^{1}(\Omega).$$
%\par In view of the coerciveness (\ref{2.3}) of the matrix $A$, we have proved that $W_n$ tends to $W^{\star}$ in $H_
%{0}^{1}(\Omega)$ strongly. It is then easy to pass to the limit in (\ref{continuous}), and it prove that $W^{\star}=S(w_{\infty})$.
%\\
%\indent This proves that $S$ is continuous from $H_{0}^{1}(\Omega)$ strongly into $H_{0}^{1}(\Omega)$ .
\\
\\
\noindent {\textbf{Third step}} 
In this step we prove that $S(B)$ is
precompact in $H_{0}^{1}(\Omega)$.
\par For this we consider a sequence $w_n\in B$ and we define $W_n$ as $W_n=S(w_n)$;  in other terms $W_n$ is the
solution of (\ref{continuous}). Since $w_n$ and  $W_n$ belong to
$B$, they are bounded in $H_{0}^{1}(\Omega)$, and
 we can extract a subsequence, still denoted by $n$,  such that
$$ w_n\rightharpoonup w\;\;\;\textrm{ in }\;\;\;H_0^{1}(\Omega) \;\;\;{\rm weakly\; \;\;and\;\;\; a.e.\;\;\; in } \;\;\;\Omega,$$
$$W_n\rightharpoonup W^{\star}\;\;\;\textrm{ in }\;\;\;H_0^{1}(\Omega) \;\;\;{\rm weakly \;\;\; and\;\;\; a.e.\;\;\; in }\;\;\; \Omega .$$
Since $w_n$ and  $W_n$  satisfies the assumptions of Lemma \ref{lem
436}, we have
$$W_n\rightarrow W^\star\;\;\; {\rm in }\;\;\;H_{0}^{1}(\Omega)\;\;\; {\rm strongly}.$$
\par This proves that $S(B)$ is precompact in $H_{0}^{1}(\Omega)$ (note that in contrast
with the second step, we do not need here to prove that
$W^\star=S(w)$).
\\
\\
\noindent {\textbf{End of the proof of Theorem \ref{propo}}} We have
proved that the application $S$ and the ball $B$ satisfy the
assumptions of Schauder's fixed point theorem. Therefore there
exists at least one  $w_k\in B$ such that $S(w_k)=w_k$. This proves
Theorem \ref{propo}.
\qquad$\Box$
\end{proof}
%\item $S$ admits to fixed point which is solution of (\ref{care}).
%\end{itemize}
%======================================================Section 5   ==========================================
%=======================================================================================================================

\section{Proof of Theorem~\ref{theo34}}
Theorem \ref{propo} asserts that for for every $k>0$ fixed there
exists at least a solution $w_k$ of (\ref{care}) which satisfies
(\ref{est'}). We can therefore extract a subsequence, still denoted
by $k$, such that for some $w^\star\in H^{1}_{0}(\Omega)$
\begin{equation}\label{5.1}
 w_k\rightharpoonup w^{\star} \quad {\textrm{ in }}\;\;\; H_{0}^{1}(\Omega) \;\;\; {\rm weakly\;\;\; and\;\;\; a.e.\;\;\; in } \;\;\;\Omega,\\
\end{equation}
where $w^\star$ satisfies
\begin{equation}\label{5.1bis}
\|w^\star\|_{H_{0}^{1}(\Omega)}\leq Z_{\delta_0}, 
\end{equation}
i.e.  (\ref{CCter}).
\\ 
\par In this Section we will first prove that for this subsequence
\begin{equation}\label{5.2}
  w_k\rightarrow w^\star \;\;\; {\rm in}\;\;\; H_{0}^{1}(\Omega)\;\;\;{\rm strongly},
\end{equation}
and then that $w^\star $ is a solution of (\ref{CCbis}) 
(which satisfies 
(\ref{CCter})). This will prove Theorem \ref{theo34}.
\\
\par To prove (\ref{5.2}), we use a technique which traces back
to \cite{bens} (see also \cite{fm}).
\par For $n>0$, we define $G_n:\mathbb{R}\rightarrow \mathbb{R}$ as the remainder of
the truncation at height $n$, namely
\begin{equation}\label{5.3bis}
G_n (s)=s-T_n (s),\;\;\;\forall s\in\mathbb{R}
\end{equation}
where $T_n$ is the truncation at height $n$ defined by (\ref{tk}),
or in other terms
\begin{equation}\label{gn}
G_{n}(s)= \left\{\begin{array}{ll}
s+n &  \;\;\;{\rm if }\;\;\; s\leq -n ,\\
0 &  \;\;\;{\rm if }\;\;\; -n\leq s\leq n,\\
s-n & \;\;\; {\rm if }\;\;\; s\geq n.
\end{array}
\right.
\end{equation}
\\
\par First we  prove the two following Lemmas.

%===============================================  Lemme  5.1   =======================================
\begin{lemma}\label{lem5.1}
Assume that \rm{(\ref{2.2}), (\ref{2.3}), (\ref{2.6}), (\ref{2.5}),
(\ref{2.5bis})   and (\ref{2.4})} \it{hold true}. Assume moreover
that the two smallness conditions {\rm (\ref{A1})} and {\rm (\ref{A3})} hold true. Let $w_k$ be a
solution of {\rm (\ref{care})}. Assume finally that the subsequence $w_k$
satisfies {\rm (\ref{5.1})}.
\\
\indent Then for this subsequence we have
\begin{equation}\label{CV1}
\displaystyle \underset{k\rightarrow
+\infty}\limsup\int_{\Omega}\,|DG_n (w_k)|^{2}dx\rightarrow 0 \;\;\;
{ as }\;\;\;n\rightarrow +\infty.
\end{equation}
\end{lemma}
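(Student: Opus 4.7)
The plan is to use $G_n(w_k)\in H^{1}_{0}(\Omega)$ as a test function in the approximate equation (\ref{care}). Since $DG_n(w_k)=Dw_k\,\chi_{\{|w_k|>n\}}$, the coercivity (\ref{2.3}) of $A$ gives
\[\int_\Omega A(x)Dw_k\cdot DG_n(w_k)\,dx \;\geq\; \alpha\int_\Omega |DG_n(w_k)|^2\,dx.\]
The second left-hand side term is nonnegative: indeed $T_k(K_{\delta_0}(x,w_k,Dw_k))\geq 0$ by (\ref{def K2}) since $\delta_0\geq\gamma$, and $\textrm{sign}_k$ and $G_n$ both inherit the sign of their argument, so their product is pointwise $\geq 0$. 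Discarding this term and noting that $w_k G_n(w_k)\geq 0$ and $g_{\delta_0}(w_k)\,\textrm{sign}(w_k)\,G_n(w_k)\geq 0$, the plan is to arrive at
\[\alpha\int_\Omega |DG_n(w_k)|^2\,dx \;\leq\; \int_\Omega (1+\delta_0|w_k|)|f|\,|G_n(w_k)|\,dx + \int_\Omega a_0|w_k|\,|G_n(w_k)|\,dx + \int_\Omega a_0\,g_{\delta_0}(w_k)\,|G_n(w_k)|\,dx.\]

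Next, I would bound the four resulting terms (splitting the first integral into its $|f|$ and $\delta_0|w_k||f|$ parts) exactly as in (\ref{4.9}): H\"older with $\frac{2}{N}+\frac{1}{2^*}+\frac{1}{2^*}=1$ for the three pieces involving $f$ and $a_0|w_k|$, and the bound $g_{\delta_0}(w_k)\leq G|w_k|^{1+\theta}$ from (\ref{6.15 ter}) combined with H\"older with $\frac{1}{q}+\frac{1+\theta}{2^*}+\frac{1}{2^*}=1$ for the last piece. The key twist compared with (\ref{4.9}) is that $|G_n(w_k)|$ is supported in $\{|w_k|>n\}$, which allows one to restrict the $L^{N/2}$ (respectively $L^q$) norms of $f$ and $a_0$ to that set. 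After applying Sobolev (\ref{2.7'}) and using the a priori bound $\|w_k\|_{H^{1}_{0}(\Omega)}\leq Z_{\delta_0}$ from Theorem \ref{propo}, and dividing by $\|DG_n(w_k)\|_2$, this will give
\[\alpha\,\|DG_n(w_k)\|_2 \;\leq\; \varepsilon_k(n),\]
where $\varepsilon_k(n)$ is a combination of terms containing factors of the form $\|f\chi_{\{|w_k|>n\}}\|_{N/2}$, $\|a_0\chi_{\{|w_k|>n\}}\|_{N/2}$ and $\|a_0\chi_{\{|w_k|>n\}}\|_q$.

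The remaining and, in my view, the only genuine obstacle is to show that $\varepsilon_k(n)\to 0$ as $n\to\infty$ \emph{uniformly in} $k$. The uniform bound $\|w_k\|_{H^{1}_{0}}\leq Z_{\delta_0}$ gives a uniform bound in $L^{2^*}$, so Chebyshev's inequality yields $|\{|w_k|>n\}|\leq C/n^{2^*}$ uniformly in $k$; in particular $|\{|w_k|>n\}|\to 0$ as $n\to\infty$ uniformly in $k$. Since $|f|^{N/2}\in L^1(\Omega)$ and $|a_0|^q\in L^1(\Omega)$ are fixed functions, equi-absolute continuity of the Lebesgue integral then forces $\|f\chi_{\{|w_k|>n\}}\|_{N/2}\to 0$ and $\|a_0\chi_{\{|w_k|>n\}}\|_q\to 0$ as $n\to\infty$ uniformly in $k$; the same holds for the $L^{N/2}$ norm of $a_0$. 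Hence $\varepsilon_k(n)\to 0$ uniformly in $k$, and taking $\limsup_k$ followed by $n\to\infty$ yields (\ref{CV1}). Notice that the two smallness conditions (\ref{A1}) and (\ref{A3}) are not invoked explicitly in this argument: they enter only indirectly, since they underlie the uniform estimate (\ref{est'}) on $w_k$ which is what drives the whole computation.
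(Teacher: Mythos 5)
Your proof is correct but takes a genuinely different route from the paper's. The paper's argument proceeds in two stages: it first passes to the limit in $k$ inside the right-hand side of (\ref{5.111}), invoking almost everywhere convergence of $w_k$ to $w^\star$ together with an equiintegrability argument patterned on Lemma~\ref{lem 436} to obtain strong $L^1$ convergence (see~(\ref{5.114b})); only then does it send $n\to\infty$, using the fact that the resulting bound is an integral over $\{|w^\star|\geq n\}$ of a fixed $L^1$ function. You instead exploit the localization at the level of $w_k$ itself: since $G_n(w_k)$ is supported in $\{|w_k|>n\}$, H\"older's inequality produces the restricted norms $\|f\chi_{\{|w_k|>n\}}\|_{N/2}$, $\|a_0\chi_{\{|w_k|>n\}}\|_{N/2}$, $\|a_0\chi_{\{|w_k|>n\}}\|_q$; the uniform bound $\|Dw_k\|_2\leq Z_{\delta_0}$ combined with Chebyshev's inequality shows $|\{|w_k|>n\}|\to 0$ as $n\to\infty$ uniformly in $k$, and absolute continuity of the integral of the fixed functions $|f|^{N/2}$, $a_0^{N/2}$, $a_0^q$ then makes each of those norms vanish uniformly in $k$. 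This gives a genuinely uniform-in-$k$ estimate $\alpha\|DG_n(w_k)\|_2\leq\varepsilon(n)$ with $\varepsilon(n)\to 0$, which is slightly stronger than (\ref{CV1}) and, to my taste, more transparent: it dispenses entirely with the $L^1$-convergence and equiintegrability machinery and never needs to pass to the limit in $k$. (One small point you may wish to make explicit: for the piece $\int|f||G_n(w_k)|$, the third H\"older factor is the constant function $1\in L^{2^*}(\Omega)$, which contributes a harmless $|\Omega|^{1/2^*}$, rather than the $\|f\|_{H^{-1}}$ pairing used in (\ref{4.9}) --- the $H^{-1}$ pairing would not localize to $\{|w_k|>n\}$ and so would not produce a factor vanishing with $n$.) Both routes rest on the same a priori bound (\ref{est'}); the paper's has the advantage of reusing verbatim the argument already set up for Lemma~\ref{lem 436}.
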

\begin{proof}
Since $G_n (w_k) \in H_{0}^{1}(\Omega)$, the use of $G_n (w_k)$ as
test function in (\ref{care}) is licit. This gives
\begin{equation}\label{5.111}
\left\{\begin{array}{l}
\displaystyle\int_{\Omega}\,A(x)Dw_{k}DG_n(w_{k})\,dx+\displaystyle\int_{\Omega}\,T_{k}(K_{\delta_0}
(x,w_{k},Dw_{k}))\,\textrm{sign}_k(w_{k})\,G_n(w_{k})\,dx=
\\
\\
 =\displaystyle\int_{\Omega}\,\Big((1+\delta_0|w_{k}|)\,f(x)+a_{0}(x)\,w_{k}+a_{0}(x)\, g_{\delta_0}(w_{k})
\,\textrm{sign}(w_{k})\Big)\,G_n(w_{k})\,dx.
\end{array}
\right.
\end{equation}
\indent Using the coercivity (\ref{2.3}) of the matrix $A$, we have
for the first term of (\ref{5.111})
\begin{equation}\label{5.112}
\displaystyle\int_{\Omega}\,A(x)Dw_{k}DG_n(w_{k})\,dx=\int_{\Omega}\,A(x)DG_n(w_{k})DG_n(w_{k})\,dx\geq
\alpha\int_{\Omega}\,|DG_n(w_{k}|^{2}\,dx.
\end{equation}
\noindent On the other hand, since $${\rm sign}_{k}(s)\,G_n(s)=|{\rm
sign}_{k}(s)| \,|G_n(s)|\geq 0,\qquad \forall s\in \mathbb{R},$$
 and since $T_{k}(K_{\delta_0}(x,w_{k},Dw_{k}))\geq 0$ in view of (\ref{def K2}) and of $\delta_0 \geq \gamma$ (see (\ref{3777bis})), 
we have
\begin{equation}\label{5.113b}
\displaystyle\displaystyle\int_{\Omega}\,T_{k}(K_{\delta_0}(x,w_{k},Dw_{k}))\,\textrm{sign}_k(w_{k})\,G_n(w_{k})\,dx\geq
0.
\end{equation}
\noindent Finally, we observe that, by a proof which is similar to
the one that we used in the proof of Lemma \ref{lem 436}, we have
 \begin{equation}\label{5.114b}
 \left\{\begin{array}{l}
\Big((1+\delta_0|w_{k}|)\,f(x)+a_{0}(x)\,w_{k}+a_{0}(x)\, g_{\delta_0}(w_{k})
\,\textrm{sign}(w_{k})\Big)\,G_n(w_{k})\rightarrow \\
\\
   \rightarrow\Big((1+\delta_0|w^\star|)\,f(x)+a_{0}(x)\,w^\star+a_{0}(x)\, g_{\delta_0}(w^\star)
\,\textrm{sign}(w^\star)\Big)\,G_n(w^\star)\\
\\
{\rm in}\;\;\;L^1(\Omega)\;\;\;{\rm strongly},
 \end{array}
 \right.
\end{equation}
since the functions in the left-hand side of (\ref{5.114b}) converge
almost everywhere in $\Omega$ and are equi\-integrable.
\par Together with (\ref{5.111}), the three results (\ref{5.112}), (\ref{5.113b}) and
(\ref{5.114b}) imply that
%%%%%%%%%%%%%%%%%%%%%%%%%%%%%%%%%%%%%%%%%%%%%%%%%%%%%%%%%%%%%%%%%%%%%%%%%%%%%%%%%%%%%%%%%%%%%%%%%%%%%%%%%%%%%%%%
\begin{equation}\label{5.115}
\left \{\begin{array}{l}
   \underset{k\rightarrow +\infty}\limsup \;\alpha\displaystyle\int_{\Omega}\,|DG_n(w_{k}|^{2}\,dx \leq\\
   \\
 \leq\displaystyle\int_{\Omega}\Big( (1+\delta_0|w^\star|)\,f(x)+ a_{0}(x)\,w^\star+a_{0}(x)\, g_{\delta_0}(w^\star)
\,\textrm{sign}(w^\star)\Big)\,G_n(w^\star)\,dx.
 \end{array}
\right.
\end{equation}
%=====
\par But since $|G_n(w^\star)|\leq |w^\star|$ and since $G_n(w^\star)=0$ in the set $\{|w^\star|\leq n\}$, the right-hand side of (\ref{5.115})
 is bounded from above by
%%%%%%%%%%%%%%%%%%%%%%%%%%%%%%%%%%%%%%%%%%%%%%%%%%%%%%%%%%%%%%%%%%%%%%%%%%%%%%%%%%%%%%%%%%%%%%%%%%ï¿½ï¿½
\begin{equation}
\displaystyle\int_{\{|w^\star|\geq
n\}}\,\Big((1+\delta_0|w^\star|)\,|f(x)|+
a_{0}(x)\,|w^\star|+a_{0}(x)\, |g_{\delta_0}(w^\star)| \Big)
\,|w^\star|\,dx,
\end{equation}
%%%%%%%%%%%%%%%%%%%%%%%%%%%%%%%%%%%%%%%%%%%%%%%%%%%%%%%%%%%%%%%%%%%%%%%%%%%%%%%%%%%%%%%%%%%%%%%%%%%ï¿½
%%%%%%%%%%%%%%%%%%%%%%%%%%%%%%%%%%%%%%%%%%%%%%%%%%%%%%%%%%%%%%%%%%%%%%%%%%%%%%%%%%%%%%%%%%%%%%%%%%%ï¿½ï¿½ï¿½ï¿½ï¿½
 which tends to zero when $n$ tends to infinity because the integrand belongs to $L^1(\Omega)$.\\
\indent This prove (\ref{CV1}). 
\qquad $\Box$
\end{proof}
\\
%==============================================  Lemme   5.2  =======================================
\begin{lemma}\label{lem5.2}
Assume that \rm{(\ref{2.2}), (\ref{2.3}), (\ref{2.6}), (\ref{2.5}),
(\ref{2.5bis})   and (\ref{2.4})} \it{hold true}. Assume moreover
that {the two smallness conditions \rm (\ref{A1})} and {\rm (\ref{A3})} hold true. Let $w_k$ be a
solution of {\rm (\ref{care})}. Assume finally that the subsequence $w_k$
satisfies {\rm (\ref{5.1})}.
\\
\indent Then for this subsequence we have for every $n>0$ fixed
\begin{equation}\label{CV2}
T_n(w_k)\rightarrow T_n(w^\star)\;\;\;{ in }\;\;\;
H^{1}_{0}(\Omega)\;\;\;{ strongly\;\;\; as}\;\;\;k\rightarrow
+\,\infty .
\end{equation}
\end{lemma}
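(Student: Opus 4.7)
The plan is to apply the Boccardo--Murat--Puel exponential test function technique. I fix $n>0$ and set $v_k := T_n(w_k) - T_n(w^\star) \in H_0^1(\Omega) \cap L^\infty(\Omega)$, which by \eqref{5.1} and the Lipschitz continuity of $T_n$ satisfies $|v_k| \leq 2n$, $v_k \rightharpoonup 0$ weakly in $H_0^1(\Omega)$, and $v_k \to 0$ a.e.\ in $\Omega$. Choosing $\mu := (c_0+\delta_0)^2/4$, the function $\psi_\mu(s) := s\,e^{\mu s^2}$ satisfies, by completing the square in $(1 + 2\mu s^2) - (c_0+\delta_0)|s|$, the key pointwise inequality
\begin{equation*}
\psi_\mu'(s) - (c_0+\delta_0)\,|\psi_\mu(s)| \;\geq\; \frac{1}{2}, \qquad \forall s \in \mathbb{R},
\end{equation*}
which is designed to absorb the quadratic growth coming from $K_{\delta_0}$.

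I then use $\psi_\mu(v_k) \in H_0^1(\Omega)\cap L^\infty(\Omega)$ as a test function in the approximate equation \eqref{care}. The central algebraic identities are $Dw_k = DT_n(w_k) + DG_n(w_k)$, whose two pieces have pointwise disjoint supports (so that $A\,DT_n(w_k)\cdot DG_n(w_k) = 0$ a.e.), together with $DT_n(w_k) = Dv_k + DT_n(w^\star)$. Combined with the bound $0 \leq T_k(K_{\delta_0}(x,w_k,Dw_k)) \leq (c_0+\delta_0)\,A\,Dw_k\,Dw_k$ from Remark~\ref{rmq3.1}, a careful expansion yields an inequality of the form
\begin{equation*}
\int_\Omega A\,Dv_k\,Dv_k\,\bigl[\psi_\mu'(v_k) - (c_0+\delta_0)\,|\psi_\mu(v_k)|\bigr]\,dx \;\leq\; R_k^{(1)} + R_k^{(2)} + R_k^{(3)},
\end{equation*}
where $R_k^{(1)}$ collects every cross-term in which $DT_n(w^\star)$ is paired with $Dv_k$ or $DG_n(w_k)$ through the factors $\psi_\mu'(v_k)$ or $|\psi_\mu(v_k)|$, while $R_k^{(2)} := (c_0+\delta_0)\int_\Omega A\,DG_n(w_k)\,DG_n(w_k)\,|\psi_\mu(v_k)|\,dx$ and $R_k^{(3)} := \int_\Omega F_k\,\psi_\mu(v_k)\,dx$, with $F_k$ denoting the right-hand side of \eqref{care}.

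Passing to the limit $k \to \infty$, each term in $R_k^{(1)}$ vanishes, either by dominated convergence (the bounded factor $|\psi_\mu(v_k)| \to 0$ a.e.) or by weak--strong pairings such as $DG_n(w_k) \rightharpoonup DG_n(w^\star)$ in $L^2$ against $A\,DT_n(w^\star)\,\psi_\mu'(v_k) \to A\,DT_n(w^\star)$ strongly in $L^2$, the surviving limit integral $\int A\,DG_n(w^\star)\,DT_n(w^\star)$ vanishing by the disjoint supports of $DG_n(w^\star)$ and $DT_n(w^\star)$. The term $R_k^{(3)} \to 0$ by the same H\"older--Sobolev equiintegrability argument used in the proof of Lemma~\ref{lem 436}. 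The main obstacle is $R_k^{(2)}$: here I exploit the fact that on $\{|w^\star|>n\} \cap \{|w_k|>n\}$ the a.e.\ convergence $w_k \to w^\star$ forces, for $k$ large, $\textrm{sign}(w_k) = \textrm{sign}(w^\star)$ and hence $T_n(w_k) = T_n(w^\star)$, so that $\psi_\mu(v_k) \equiv 0$ there; whereas the set $\{|w^\star|\leq n\} \cap \{|w_k|>n\}$ has measure shrinking to $|\{|w^\star|=n\}| = 0$ (which holds for almost every $n$, and one may choose $n$ accordingly). Together with Lemma~\ref{lem5.1}, this bounds $\limsup_k R_k^{(2)}$ by a quantity $\eta(n)$ which vanishes as $n \to \infty$.

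Using the pointwise lower bound on $\psi_\mu$ and the coercivity of $A$, I thus obtain $\limsup_{k\to\infty} \|Dv_k\|_{L^2}^2 \leq (2/\alpha)\,\eta(n)$ for every $n$. Combining with Lemma~\ref{lem5.1} and the elementary splitting $\|Dw_k - Dw^\star\|_{L^2}^2 \leq 2\|Dv_k\|_{L^2}^2 + 4\|DG_n(w_k)\|_{L^2}^2 + 4\|DG_n(w^\star)\|_{L^2}^2$, valid since $DT_n$ and $DG_n$ have disjoint supports, and noting that $\|DG_n(w^\star)\|_{L^2} \to 0$ as $n \to \infty$ by $w^\star \in H_0^1(\Omega)$, I deduce first the full strong convergence $Dw_k \to Dw^\star$ in $L^2(\Omega)^N$, from which \eqref{CV2} for every fixed $n$ follows routinely.
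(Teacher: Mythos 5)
Your overall framework is right (exponential test function $\psi_\mu$, the decomposition $Dw_k = Dv_k + DT_n(w^\star) + DG_n(w_k)$, disjoint supports of $DT_n$ and $DG_n$), and your $R_k^{(1)}$ and $R_k^{(3)}$ are handled correctly. The proof breaks down, however, at $R_k^{(2)}$.

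By estimating $\bigl|T_k(K_{\delta_0}(x,w_k,Dw_k))\,\mathrm{sign}_k(w_k)\,\psi_\mu(v_k)\bigr|$ by $(c_0+\delta_0)\,|\psi_\mu(v_k)|\,A\,Dw_k\,Dw_k$ over the \emph{whole} of $\Omega$, you create the term
$R_k^{(2)}=(c_0+\delta_0)\int_\Omega A\,DG_n(w_k)\,DG_n(w_k)\,|\psi_\mu(v_k)|\,dx$, and your claim that $\limsup_k R_k^{(2)}\leq\eta(n)\to0$ is not justified. The a.e.\ convergence $\psi_\mu(v_k)\to0$ on $\{|w^\star|>n\}$ cannot be transferred through the integral, because $|DG_n(w_k)|^2$ is only bounded in $L^1(\Omega)$, not equiintegrable; neither dominated convergence nor Vitali applies, and the smallness of the exceptional set $\{|w^\star|\leq n\}\cap\{|w_k|>n\}$ gives nothing for the same reason. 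Falling back on the uniform bound $|\psi_\mu(v_k)|\leq 2n\,e^{4\mu n^2}$ together with Lemma~\ref{lem5.1} yields $\limsup_k R_k^{(2)}\leq 2n\,e^{4\mu n^2}\,\limsup_k\int_\Omega|DG_n(w_k)|^2$, and the superexponential prefactor grows far faster than any rate at which the $\limsup$ can be expected to decay, so there is no reason for the product to vanish as $n\to\infty$. This is a genuine gap, not a presentational one.

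The paper's proof avoids creating $R_k^{(2)}$ at all by exploiting a sign, not a bound, on the region where $DG_n(w_k)\neq0$. It splits the quadratic integral into $\{|w_k|>n\}$ and $\{|w_k|\leq n\}$. On $\{|w_k|>n\}$ one has $z_k=n\,\mathrm{sign}(w_k)-T_n(w^\star)$ and hence $\mathrm{sign}(z_k)=\mathrm{sign}(w_k)$, so $\mathrm{sign}_k(w_k)\,\psi(z_k)=|\mathrm{sign}_k(w_k)|\,|\psi(z_k)|\geq0$; since $T_k(K_{\delta_0})\geq0$, the whole contribution from $\{|w_k|>n\}$ has the favorable sign on the left-hand side and is simply discarded. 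On $\{|w_k|\leq n\}$ one has $DG_n(w_k)=0$, so the absolute-value bound on $T_k(K_{\delta_0})$ is only ever used where $Dw_k=Dz_k+DT_n(w^\star)$, and no $DG_n(w_k)$ survives in the expansion. This sign observation is the ingredient missing from your argument; without it the term $R_k^{(2)}$ is unavoidable and, as explained above, uncontrollable. (A secondary, lesser point: you restrict $n$ so that $|\{|w^\star|=n\}|=0$ and then deduce \eqref{CV2} for every $n$ only after first proving full strong convergence; that reorganization is harmless in itself, but it does not repair the $R_k^{(2)}$ issue.)
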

\begin{proof}
In this proof $n$ is fixed. We define
\begin{equation}\label{7.0}
z_k=T_n(w_k)-T_n(w^\star),
\end{equation}
and we fix a $C^1$ function $\psi:\mathbb{R}\rightarrow\mathbb{R}$
such that
\begin{equation}\label{7.000}
\psi(0)=0,\quad \psi'(s)-(c_0+\delta_0)\,|\psi(s)|\geq1/2,\;\;\;
\forall s\in \mathbb{R},
\end{equation}
where $c_0$ is the constant which appears in the left-hand side of assumption
(\ref{2.6}) on $H$; there exist such functions $\psi$: indeed an example is
$$
\psi(s)=s \exp\left(\dfrac{(c_0+\delta_0)^2}{4}s^2\right).
$$
\noindent{\textbf{First step}} 
Since $z_k\in H^{1}_{0}(\Omega)\cap
L^\infty(\Omega)$, and since $\psi(0)=0$, the function $\psi(z_k)$
belongs to\break $H^{1}_{0}(\Omega)\cap L^\infty(\Omega)$. The use
of $\psi(z_k)$ as test function in (\ref{care}) is therefore licit.
This gives

\begin{equation}\label{psi1}
\left\{\begin{array}{l} \displaystyle\int_{\Omega}\,A(x)Dw_{k}
Dz_k\,\psi'(z_k)dx
+\int_{\Omega}\,T_{k}(K_{\delta_{0}}(x,w_{k},Dw_{k}))\, {\rm
sign}_{k} (w_{k})\,\psi(z_{k})\,dx\, =
\\
\\
=\displaystyle\int_{\Omega}\,\Big((1+\delta_0
|w_{k}|)\,f(x)+a_{0}(x)\,w_{k}+a_{0}(x)\, g_{\delta_0}(w_{k})
\,\textrm{sign}(w_{k})\Big)\,\psi(z_k)\,dx.
\end{array}
\right.
\end{equation}

\par Since
\begin{equation}\label{5.9}
% \nonumber to remove numbering (before each equation)
  \begin{array}{l}
  Dw_k =DT_n(w_k)+DG_n(w_k) = Dz_k+DT_n(w^\star)+DG_n(w_k),
  \end{array}
  \end{equation}
the first term of the left-hand side of (\ref{psi1}) reads as
\begin{equation}\label{psi2}
\left\{\begin{array}{l}
\displaystyle\int_{\Omega}\,A(x)Dw_{k}Dz_k\,\psi'(z_k)\,dx=
\displaystyle\int_{\Omega}\,A(x)Dz_{k}{Dz_k}\,\psi'(z_k)\,dx\,+
\\
\\
\displaystyle
+\int_{\Omega}\,A(x)DT_n(w^\star)Dz_k\,\psi'(z_k)\,dx\,
 +\int_{\Omega}\,A(x)DG_n(w_k)Dz_k\,\psi'(z_k)\,dx.
\end{array}
\right.
\end{equation}

\par On the other hand, splitting $\Omega$ into $\Omega=\{|w_k|>n\}\cup\{|w_k|\leq n\}$, the second term of the left-hand side
of (\ref{psi1}) reads as
\begin{equation}\label{5.12}
\left \{\begin{array}{l}
\displaystyle\int_{\Omega}\,T_{k}(K_{\delta_0}(x,w_{k},Dw_{k}))\,\textrm{sign}_k(w_{k})\,\psi(z_k)\,dx\,=  \\
   \\
= \displaystyle\int_{\{|w_k|>n\}}\,T_{k}(K_{\delta_0}(x,w_{k},Dw_{k}))\,\textrm{sign}_k(w_{k})\,\psi(z_k)\,dx\,+ \\
  \\
 + \displaystyle\int_{\{|w_k|\leq n\}}\,T_{k}(K_{\delta_0}(x,w_{k},Dw_{k}))\,\textrm{sign}_k(w_{k})\,\psi(z_k)\,dx.
\end{array}
\right.
\end{equation}

For what concerns the first term of the right-hand of (\ref{5.12}),
we claim that
\begin{equation}\label{5.15}
\begin{array}{l}
 \displaystyle\int_{\{|w_k|>n\}}\,T_{k}(K_{\delta_0}(x,w_{k},Dw_{k}))\,\textrm{sign}_k(w_{k})\,\psi(z_k)\,dx\geq 0 \, ;
\end{array}
\end{equation}
indeed in $\{|w_k|>n\}$, the integrand is nonnegative since on the
first  hand $T_{k}(K_{\delta_0}(x,w_{k},Dw_{k}))\geq 0$  in view of
(\ref{def K2}) and  of $\delta_0\geq\gamma$ (see (\ref{3777bis})), 
and since on the other
hand one has
 \begin{equation}\label{5.3636}
\begin{array}{l}
\textrm{sign}_k(w_{k})\,\psi(z_k)\geq 0\;\;\; {\rm
in}\;\;\;\{|w_k|>n\};
\end{array}
\end{equation}
% to prove (\ref{5.3636}), observe that on $\{|w_k|>n\}$ one has $z_k=n \,{\rm sign}(w_k)-T_n(w^\star)$, and therefore
% ${\rm sign}(z_k)={\rm sign}(w_k)$;
%this implies that
% $$
%{\rm sign}(w_k)\,\psi(z_k)={\rm sign}(z_k)\,\psi(z_k)=|\psi(z_k)|\;\;\;{\rm on}\;\;\;\{|w_k|>n\},
 %$$
% which, together with the fact that sign($s$) and sign$_k(s)$ have the same sign, proves (\ref{5.3636}).
indeed since sign($s$) and sign$_k(s)$ have the same sign, it is
equivalent either to prove (\ref{5.3636}) or to prove that
 \begin{equation}\label{5.3636bis}
\begin{array}{l}
\textrm{sign}(w_{k})\,\psi(z_k)\geq 0\;\;\; {\rm
in}\;\;\;\{|w_k|>n\};
\end{array}
\end{equation}
 but in $\{|w_k|>n\}$ one has $z_k=n \,{\rm sign}(w_k)-T_n(w^\star)$, and therefore
${\rm sign}(z_k)={\rm sign}(w_k)$; this implies that
$$
{\rm sign}(w_k)\,\psi(z_k)={\rm
sign}(z_k)\,\psi(z_k)=|\psi(z_k)|\;\;\;{\rm in}\;\;\;\{|w_k|>n\},
$$
which proves (\ref{5.3636bis}).
\par For what concerns the second term of the right-hand side of (\ref{5.12}), we observe that, in view of (\ref{def K2}) and of
 $\delta_0\geq\gamma$ (see( \ref{3777bis})), we have
\begin{equation}
\left \{\begin{array}{l}
|T_{k}(K_{\delta_0}(x,w_{k},Dw_{k}))\,\textrm{sign}_k(w_{k})\,\psi(z_k)|
 \leq|K_{\delta_0}(x,w_{k},Dw_{k})||\psi(z_k)|\leq
\\\\
 \leq (c_0
+\delta_0)|\psi(z_k)|\,A(x)Dw_kDw_k.
\end{array}
\right.
\end{equation}
Since in view of (\ref{5.9}) one has
 $$
  Dw_k = Dz_k+DT_n(w^\star)\;\;\;{\rm in}\;\;\;\{|w_k|\leq n\},
  $$
we obtain
%\begin{equation}\label{5.1616}
%\left \{\begin{array}{l}
%\displaystyle\int_ {\{|w_k|\leq n\}} \,T_{k}(K_{\delta_0}(x,w_{k},Dw_{k}))\,\textrm{sign}_k(w_{k})\,\psi(z_k)| \,dx\geq
%   \\
%   \\
%\geq - \displaystyle\int_ {\{|w_k|\leq n\}}\, (c_0+\delta_0)|\psi(z_k)|\, A(x)Dw_kDw_k \,dx \geq
%\\
%\\
%\geq - \displaystyle\int_ {\Omega}\, (c_0+\delta_0)|\psi(z_k)|\, A(x)Dz_kDz_k \,dx\; +
%\\
%\\
%- \displaystyle\int_ {\Omega}\, \chi_{\{|w_k|\leq n\}}(c_0+\delta_0)|\psi(z_k)|\Big(A(x)DT_n(w^\star)Dz_k+A(x)Dz_k\,DT_n(w^\star)\,+\\
%\qquad\qquad\qquad\qquad\qquad\qquad\qquad\qquad\qquad\qquad\qquad +\;A(x)DT_n(w^\star)DT_n(w^\star)\Big)dx.
%
%\end{array}
%\right.
%\end{equation}
\begin{equation}\label{5.1616}
\left \{\begin{array}{l} \displaystyle\int_ {\{|w_k|\leq n\}}
\,T_{k}(K_{\delta_0}(x,w_{k},Dw_{k}))\,\textrm{sign}_k(w_{k})\,\psi(z_k)
\,dx\geq
   \\
   \\
\geq - \displaystyle\int_ {\{|w_k|\leq n\}}\,
(c_0+\delta_0)|\psi(z_k)|\, A(x)Dw_kDw_k \,dx =
\\
\\
= - \displaystyle\int_ {\{|w_k|\leq n\}} \,
(c_0+\delta_0)|\psi(z_k)|\, A(x)(Dz_k +DT_n(w^\star))(Dz_k
+DT_n(w^\star))\,dx \geq
\\
\\
\geq - \displaystyle\int_ {\Omega}\, (c_0+\delta_0)|\psi(z_k)|\,
A(x)(Dz_k +DT_n(w^\star))(Dz_k +DT_n(w^\star))\,dx \geq
\\
\\
\geq - \displaystyle\int_ {\Omega}\, (c_0+\delta_0)|\psi(z_k)|\,
A(x)Dz_kDz_k \,dx\, +
\\
\\
- \displaystyle\int_ {\Omega}\,
(c_0+\delta_0)|\psi(z_k)|\\
\\
\qquad\Big(A(x)DT_n(w^\star)Dz_k+A(x)Dz_k\,DT_n(w^\star)
+A(x)DT_n(w^\star)DT_n(w^\star)\Big)\,dx.
\end{array}
\right.
\end{equation}
From (\ref{psi1}), (\ref{psi2}), (\ref{5.12}), (\ref{5.15}) and
(\ref{5.1616}) we deduce that
\begin{equation}\label{psi6}
\left\{\begin{array}{l} \displaystyle\int_{\Omega}\,A(x)Dz_k
Dz_k\,\big(\psi'(z_k)-(c_0+\delta_0)|\psi (z_k)|\big)\,dx \,\leq
\\
\\
\leq\displaystyle -\int_{\Omega}\,A(x)DT_n(w^\star)Dz_k\,\psi
'(z_k)dx-\int_{\Omega}\,A(x)DG_n(w_k)Dz_k\,\psi '(z_k)\,dx\, +
\\\\
+\displaystyle \int_{\Omega}\,(c_0+\delta_0)|\psi (z_k)|
\\
\\
\qquad\Big ( A(x)DT_n(w^\star)Dz_k+A(x)Dz_kDT_n(w^\star) +
A(x)DT_n(w^\star)DT_n(w^\star)\Big)\,dx\,+
\\
\\
\displaystyle
+\int_{\Omega}\,\Big((1+\delta_0|w_{k}|)\,f(x)+a_{0}(x)\,w_{k}+a_{0}(x)\, g_{\delta_0}(w_{k})
\,\textrm{sign}(w_{k})\Big)\,\psi(z_k)\,dx.
\end{array}
\right.
\end{equation}
\\
\noindent{\textbf{Second step}} 
We claim that each term of the right-hand side
of (\ref{psi6})  tends to zero as $k$ tends to infinity. Since
$\psi'(z_k)-(c_0+\delta)\,|\psi(z_k)|\geq1/2$ by (\ref{7.000}), and
since the matrix $A$ is coercive (see (\ref{2.3})), this will imply
that
$$
z_k\rightarrow 0\;\;\;{\rm in\;\;\;}H^{1}_{0}(\Omega)\;\;\;{\rm
strongly},
$$
or in other terms (see the definition (\ref{7.0}) of $z_k$) that
$$
T_n(w_k)\rightarrow T_n(w^\star)\;\;\;{\rm in
}\;\;\;H^{1}_{0}(\Omega)\;\;\; {\rm
strongly\;\;\;as\;\;\;}k\rightarrow +\infty,
$$
which is nothing but (\ref{CV2}). Lemma \ref{lem5.2} will therefore
be proved whenever the claim will be proved.
\\
\par
In order to prove the claim let us recall that in view of
(\ref{5.1}) and of the definition (\ref{7.0}) of $z_k$ one has
$$
z_k\rightharpoonup 0 \;\;\;{\rm in }\;\;\;H^{1}_{0}(\Omega)\;\;\;
{\rm  weakly},\;\;\; L^{\infty}(\Omega)\;\;\;{\rm weakly} \; {\rm star}
\;\;\;{\rm and \;\;\; a.e.}\;\;\;{\rm in}\;
\;\;\Omega\;\;\;{\rm as}\;\;\; k\rightarrow +\infty.
$$
\par Since $\psi(0)=0$, this implies that $\psi(z_k)$ tends to zero
almost everywhere in $\Omega$ and in $L^\infty(\Omega)$ weakly star
as $k$ tends to infinity, which in turn implies that
$$
Dz_k\,\psi'(z_k)=D\psi(z_k)\rightharpoonup 0\;\;\;{\rm in
}\;\;\;L^2(\Omega)^N\;\;\;{\rm weakly}\;\;\;{\rm
as}\;\;\;k\rightarrow +\infty.
$$
This implies that the first term of the right-hand side of
(\ref{psi6}) tends to zero as $k$ tends to infinity.
\\
\par For the second term of the right-hand side of of (\ref{psi6}) we observe that
$$
A(x)DG_n(w_k)Dz_k=A(x)DG_n(w_k)(DT_n(w_k)-DT_n(w^\star)) =-
A(x)DG_n(w_k)DT_n(w^\star),
$$
and that by Lebesgue's dominated convergence theorem
$$
DT_n(w^\star)\,\psi'(z_k)\rightarrow DT_n(w^\star)\,\psi'(0)
\;\;\;{\rm in }\;\;\;L^2(\Omega)^N\;\;\;{\rm strongly}\;\;\;{\rm
as}\;\;\; k\rightarrow +\infty,
$$
while $ DG_n(w_k)$ tends to $DG_n(w^\star)$ weakly in
$L^2(\Omega)^N$.
 Since $A(x)DG_n(w^\star)DT_n(w^\star)=0$ almost everywhere, the second term of the right-hand side of  (\ref{psi6}) tends to zero.\\

\par For the third term of the right-hand side of  (\ref{psi6}), we observe that
$$(c_0+\delta_0)|\psi (z_k)| A(x)DT_n(w^\star)\rightarrow 0\;\;\;{\rm in }\;L^2(\Omega)^N\;\;\;{
\rm strongly}\;\;\;{\rm as}\;\;\;k\rightarrow +\infty$$ by
Lebesgue's dominated convergence theorem,  since $\psi(z_k)$ is
bounded in $L^\infty(\Omega)$ and since $\psi(z_k)$ tends
  almost everywhere to zero because $\psi(0)=0$.
%and is equiintegrable in $L^2(\Omega)^N $.
 Since $Dz_k$ is bounded
in $L^2(\Omega)^N $, this implies that the first part of this third term tends to zero. A similar proof holds true for the two others parts of this third term.\\
\par Finally the fourth term of the right-hand side of  (\ref{psi6}) tends to zero by
a proof which is similar to the one that we used in the proof of
Lemma \ref{lem 436}, since the integrand converges almost everywhere
to zero and is equiintegrable.
\\
\par The claim made at the beginning of the second step is proved.
This completes the proof of Lemma \ref{lem5.2}.
\qquad$\Box$
\end{proof}
%\vspace{0.8cm}
\\
\\
%====================     Fin de la demonstration du   theoreme   3.8   ================

\noindent\textbf{End of the proof of Theorem \ref{theo34}}
\\
\\
\noindent{\textbf{First step}} 
Since we have
$$w_k-w^\star=T_n(w_k)+G_n(w_k)-T_n(w^\star)-G_n(w^\star), $$
and since by Lemma \ref{lem5.2} (see (\ref{CV2})) we have
$$
\|T_n(w_k)-T_n(w^\star)\|_{H_{0}^{1}(\Omega)}\rightarrow 0\quad {\rm
as}\;\;\;k\rightarrow +\,\infty\;\;\;{\rm for\;every}
\;\;\;n>0\;{\rm fixed},
$$
while by Lemma \ref{lem5.1} (see (\ref{CV1})) we have
$$\underset{n\rightarrow +\infty}\limsup\;\underset{k\rightarrow +\infty}\limsup\;\|G_n (w_k)\|_{H^{1}_{0}(\Omega)}=0,$$
and while we have
$$
\underset{n\rightarrow +\infty}\limsup\;\|G_n
(w^\star)\|_{H^{1}_{0}(\Omega)}=0,
$$
since  $w^\star\in H^{1}_{0}(\Omega)$, we conclude that
\begin{equation}\label{5.10000}
w_k \rightarrow w^{\star}\;\;\; {\rm
in}\;\;\;H^{1}_{0}(\Omega)\;\;\;{\rm
strongly\;\;\;as\;\;\;}k\rightarrow +\,\infty,
\end{equation}
which is nothing but (\ref{5.2}).
\\
\\
\noindent{\textbf{Second step}} 
Let us now pass to the limit in
(\ref{care}) as $k$ tends to infinity. This is easy for the first
term of the left-hand side of (\ref{care}) as well as for the three
terms of the right-hand side of (\ref{care}), which pass to the
limit in $(L^{2^\star}\!(\Omega))'$ strongly by a proof which is
similar to the one that we used in  the proof of Lemma \ref{lem
436}.
 \\
 \indent It remains to pass to the limit in the second term of the left-hand side of (\ref{care}), namely in
$$T_k\big(K_{\delta_0}(x,w_{k},Dw_{k})\big)\,{\rm sign}_k(w_k).$$
 \par
We first observe that in view of (\ref{def K2}) and of
$\delta_0\geq\gamma$ (see (\ref{3777bis})), we have
  $$
 |T_k\big(K_{\delta_0}(x,w_{k},Dw_{k})\big)\,{\rm sign}_k(w_k)|\leq|K_{\delta_0}(x,w_{k},Dw_{k})|
 \leq(c_0+\delta_0)\,\|A\|_{\infty}|Dw_k|^2\;\;\;{\rm a.e. \;\;\;in\;\;\;}\Omega,
$$
which implies that the functions
$T_k\big(K_{\delta_0}(x,w_{k},Dw_{k})\big)\,{\rm sign}_k(w_k)$ are
equiintegrable since $Dw_k$ converges strongly to $Dw^\star$ in
$L^2(\Omega)^N$.
\par Extracting if necessary a subsequence, still denoted by $k$, such that
$$
Dw_k\rightarrow Dw^\star\;\;\;{\rm a.e.}\;\;\; {\rm in} \;\;\;
\Omega,
$$
we claim that

\begin {equation}\label{5.302}
T_k\big(K_{\delta_0}(x,w_{k},Dw_{k})\big)\,{\rm
sign}_k(w_k)\rightarrow K_{\delta_0}(x,w^\star,Dw^\star)\, {\rm
sign}(w^\star)\;\;\;{\rm a.e. \;\;\;in\;\;\;}\Omega.
\end{equation}
\par On the first hand we use the first part of (\ref{consconv}),
which asserts that
$$
K_{\delta_0}(x,w_{k},Dw_{k})\rightarrow
K_{\delta_0}(x,w^\star,Dw^\star)\;\;\; {\rm a.e.
\;\;\;in\;\;\;}\Omega,
$$
and the fact that for every $s\in \mathbb{R}$
$$
T_k(s_k)\rightarrow s \;\;\; {\rm if }\;\;\;k\rightarrow
+\infty\;\;\;{\rm when}\;\;\;s_k\rightarrow s,
$$
to deduce that
\begin {equation}\label{5.xxx}
T_k\big(K_{\delta_0}(x,w_{k},Dw_{k})\big) \rightarrow
K_{\delta_0}(x,w^\star,Dw^\star) \;\;\;{\rm a.e.
\;\;\;in\;\;\;}\Omega.
\end{equation}
\par On the other hand we use the fact that
$$
{\rm sign}_k (w_k) \rightarrow {\rm sign} (w^\star)\;\;\; {\rm a.e.
\;\;\;in}\;\;\;\{y\in\Omega\,:\,w^\star(y)\neq 0\},
$$
which together with (\ref{5.xxx}) proves the convergence
(\ref{5.302}) in the set $\{ y\in \Omega\;:\;w^\star(y)\neq 0\}$.\\
\par  Finally, as far as the convergence in the set $\{ y\in \Omega\;:\;w^\star(y)= 0\}$ is concerned, convergence (\ref{5.xxx}),
the fact that (see (\ref{3.18biss}))
$$
K_{\delta_0}(x,w^\star,Dw^\star) = 0\;\;\;{\rm a.e. \;\;\,in}\;\;\;
\{y\in \Omega\,:\,w^\star (y)=0\},
$$
and the fact that 
$|{\rm sign}_k (s)|
\leq 1$
for every $s\in \mathbb{R}$ 
together prove that
$$
T_k\big(K_{\delta_0}(x,w_{k},Dw_{k})\big) \, {\rm sign}_k(w_k)
\rightarrow 0 = K_{\delta_0}(x,w^\star,Dw^\star) \, {\rm
sign}(w^\star) \;\;\;{\rm a.e.\;\;\; in\;\;\;} \{y\in
\Omega\,:\,w^\star (y) =0 \}.
$$
\par This completes the proof of (\ref{5.302}).\\
%%%%%%%%%%%%%%%%%%%%%%%%%%%%%%%%%%%%%%%%%%%%
%This results from the fact that
%$$
%T_k(s_k)\rightarrow T(s)\quad {\rm if \;}k\rightarrow +\infty\;\;{\rm when\;\;}s_k\rightarrow s,
%$$
%from the first part of (\ref{consconv}), which asserts that
%$$
%K_{\delta_0}(x,w_{k},Dw_{k})\rightarrow K_{\delta_0}(x,w^\star,Dw^\star)\quad {\rm a.e.} \;x\in\Omega,
%$$
%from
%
%$$
%{\rm sign}_k\,w_k\rightarrow {\rm sign}\,w^\star\quad{\rm a.e. \;on}\;\{y\in\Omega\,:\,w^\star(y)\neq 0\},
%$$
%and from the fact that
%
%$$K_{\delta_0}(x,w^\star,Dw^\star)\;
%{\rm sign}(w^\star)\quad{\rm a.e. }= 0\;\;{\rm a.e. \,on\;} \{y\in \Omega\,:\,w^\star (y)=0\}
%$$
%(see (\ref{3.18biss})).
\par The equiintegrability and the almost everywhere convergence  of $T_k\big(K_{\delta_0}(x,w_{k},Dw_{k})\big)\, {\rm sign}_k (w_k)$
then imply that
$$
T_k\big(K_{\delta_0}(x,w_{k},Dw_{k})\big)\, {\rm sign}_k
(w_k)\rightarrow K_{\delta_0}(x,w^\star,Dw^\star)\,{\rm sign}
(w^\star)\;\;\;{\rm in}\;\;\;L^1(\Omega)\;\;\;{\rm strongly}.
$$
\par
This proves that $w^\star $ satisfies (\ref{CCbis}). Since $w^\star$
also satisfies (\ref{CCter}) (see (\ref{5.1bis})),
 Theorem \ref{theo34} is proved.
\qquad$\Box$

% %version fm fin  version fm fin  version fm fin  version fm fin  fm fin  version fm fin  version fm

%================================  Appendix ========================================================
%====================================================================================================

\section{Appendix}
In this Appendix, we give an estimate of the function $g_{\delta}$
defined by (\ref{def g}) (see Lemma \ref{lemma32}), and the
definitions of the constants $\delta_0$ and $Z_{\delta_0}$ which
appear in Theorem \ref{theo} (see Lemma \ref{lemma 2}).
\subsection{An estimate for the function $\bf{g_{\delta}}$}
%===============================================   Lemme  61  =============================
\begin{lemma}\label{lemma32}
For $\delta>0$, let $g_\delta:\;\mathbb{R}\rightarrow \mathbb{R}$ be
the function defined by {\rm(\ref{def g})}, i.e. by
\begin{equation}
g_{\delta}(t)=-|t|+\frac{1}{\delta}(1+\delta |t|)\log(1+\delta
|t|),\;\;\;\forall t\in\mathbb{R}.
\end{equation}
\noindent Then, for every $\lambda$ and $\delta_{\star}$ with
\begin{equation}
0<\lambda<1,\quad 0<\delta_{\star}<+\infty,
\end{equation}
there exists a constant $C(\lambda)$ which depends only on
$\lambda$, with
\begin{equation}\label{6.23bis}
0<C(\lambda)\leq \sup\left\{1\,,\,\dfrac{2^{1+\lambda}}{\lambda
e}\right\},
\end{equation}
such that
\begin{equation}\label{aster}
0\leq g_{\delta}(t)\leq
\delta_{\star}^{\lambda}C(\lambda)|t|^{1+\lambda}, \;\;\;\forall
t\in\mathbb{R},\;\;\;\forall \delta,\; 0<\delta\leq
\delta_{\star}.
\end{equation}
Moreover
\begin{equation}\label{6.4bis}
0\leq g_{\delta}(t)<
\delta_{\star}^{\lambda}C(\lambda)|t|^{1+\lambda}, \;\;\;\forall
t\in\mathbb{R},\; t\neq0,\;\;\;\forall \delta,\; 0<\delta\leq
\delta_{\star}.
\end{equation}

\end{lemma}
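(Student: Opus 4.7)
My plan is to reduce the problem to studying the single real function
\[
h(s) = -s + (1+s)\log(1+s), \qquad s \geq 0,
\]
since by definition $g_\delta(t) = h(\delta|t|)/\delta$. The nonnegativity $g_\delta \geq 0$ will then follow from the fact that $h(0)=0$ and $h'(s) = \log(1+s) \geq 0$ on $s\geq 0$, so $h \geq 0$. The heart of the proof is to exhibit a constant $C(\lambda)$ such that
\[
h(s) \leq C(\lambda)\, s^{1+\lambda}, \qquad \forall s \geq 0,
\]
because then the scaling identity $g_\delta(t)=h(\delta|t|)/\delta$ yields $g_\delta(t)\leq C(\lambda)\,\delta^\lambda |t|^{1+\lambda}\leq C(\lambda)\,\delta_\star^\lambda |t|^{1+\lambda}$ whenever $0<\delta\leq\delta_\star$, which is exactly (\ref{aster}).

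To establish the bound on $h$, I would split into two regimes. For $s\in[0,1]$ the integral form of Taylor's formula gives
\[
h(s) = \int_0^s \frac{s-u}{1+u}\,du \leq \int_0^s (s-u)\,du = \frac{s^2}{2},
\]
and since $\lambda<1$ and $s\leq 1$ one has $s^2 \leq s^{1+\lambda}$, hence $h(s)\leq s^{1+\lambda}/2$. For $s\geq 1$ I would use the elementary inequality
\[
\log y \leq \frac{y^\lambda}{\lambda e}, \qquad \forall y>0,
\]
which one verifies by minimizing $\phi(y) = y^\lambda/(\lambda e) - \log y$: the critical point $y_\star = e^{1/\lambda}$ is a global minimum and $\phi(y_\star)=0$. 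Applying this to $y=1+s\leq 2s$ yields $\log(1+s)\leq 2^\lambda s^\lambda/(\lambda e)$, and combining with $h(s)\leq (1+s)\log(1+s) \leq 2s\log(1+s)$ gives
\[
h(s)\leq \frac{2^{1+\lambda}}{\lambda e}\, s^{1+\lambda}, \qquad s\geq 1.
\]
Taking $C(\lambda) = \max\!\bigl\{1/2,\, 2^{1+\lambda}/(\lambda e)\bigr\}$, which is clearly $\leq \sup\{1, 2^{1+\lambda}/(\lambda e)\}$, settles (\ref{6.23bis}) and (\ref{aster}).

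The strict version (\ref{6.4bis}) will then come essentially for free by tracking where the estimates are strict. On $(0,1]$ the integrand $(s-u)/(1+u)$ is strictly less than $s-u$ on a set of positive measure, so $h(s)<s^2/2$ strictly, and then $s^2/2 \leq C(\lambda)s^{1+\lambda}$ gives the strict conclusion. On $(1,\infty)$ the inequality $-s<0$ alone already yields $h(s)<(1+s)\log(1+s)$, and at the boundary point $s=1$ one simply checks numerically that $h(1) = -1+2\log 2 < 1 \leq C(\lambda)$. The only delicate piece is verifying the elementary bound $\log y \leq y^\lambda/(\lambda e)$ with the correct constant; once that is in hand, every remaining step is either a calculus exercise or a scaling, so I don't anticipate any serious obstacle.
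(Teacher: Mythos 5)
Your proof is correct and follows essentially the same route as the paper's: reduce to the dimensionless function $g(\tau)=-\tau+(1+\tau)\log(1+\tau)$ on $\tau\geq 0$, bound it separately on $[0,1]$ and $[1,\infty)$ using the same key estimate $\log(1+\tau)/(1+\tau)^{\lambda}\leq 1/(\lambda e)$, and then rescale via $g_{\delta}(t)=g(\delta|t|)/\delta$. Your use of the integral Taylor remainder on $[0,1]$ gives a slightly sharper constant $1/2$ where the paper's $\log(1+\tau)<\tau$ gives $1$, but otherwise the two arguments coincide, including the treatment of the strict inequality.
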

\begin{proof}
Let $ g: \mathbb{R^+}\rightarrow \mathbb{R}$ be the function defined
by
$$
g(\tau)=-\tau+(1+\tau)\log (1+\tau),\;\;\;\forall \tau \geq 0.
$$
Since $g(0)=0$ and $g'(\tau)\geq 0$, one has
\begin{equation}\label{6A}
g(\tau)\geq 0,\;\;\;\forall \tau\geq 0.
\end{equation}
\indent On the other hand, since $\log(1+\tau)< \tau$ for $\tau>0$, one has
$g(\tau)< \tau^{2}\leq \tau^{1-\lambda}\tau^{1+\lambda}$ for $\tau>0$,  and
therefore for $0<\lambda<1$ and for every $m>0$
\begin{equation}\label{6B}
g(\tau)< m^{1-\lambda}\tau^{1+\lambda},\;\;\;\forall \tau,\; 0< \tau\leq m .
\end{equation}
One has also
$$
\dfrac{g(\tau)}{\tau^{1+\lambda}}<
\dfrac{(1+\tau)\log(1+\tau)}{\tau^{1+\lambda}}=\left(\dfrac{1+\tau}{\tau}\right)^{1+\lambda}
\dfrac{\log(1+\tau)}{(1+\tau)^{\lambda}},\quad\forall \tau> 0,
$$
and therefore
$$
\dfrac{g(\tau)}{\tau^{1+\lambda}}<\left(\dfrac{1+m}{m}\right)^{1+\lambda}
\dfrac{\log(1+\tau)}{(1+\tau)^{\lambda}}\;\;\;\forall \tau\geq m>0.
$$
But the function $\dfrac{\log(1+\tau)}{(1+\tau)^{\lambda}}$ reaches its
maximum for $\tau_{0}$ defined by $(1+\tau_{0})=e^{1/\lambda}$, hence
$$
\dfrac{\log(1+\tau)}{(1+\tau)^{\lambda}}\leq \frac{1}{\lambda
e},\;\;\;\forall \tau\geq 0.
$$
This implies that for $0<\lambda<1$ and for every $m>0$
\begin{equation}\label{6C}
g(\tau)< \left(\dfrac{1+m}{m}\right)^{1+\lambda}\frac{1}{\lambda
e}\;\;\tau^{1+\lambda},\;\;\;\forall \tau\geq m,
\end{equation}
which, with (\ref{6A}) and (\ref{6B}), implies that for
$0<\lambda<1$ and for every $m>0$
\begin{equation}
\displaystyle 0\leq g(\tau)<\sup\left\{
m^{1-\lambda}\,,\,\left(\dfrac{1+m}{m}\right)^{1+\lambda}\dfrac{1}{\lambda
e}\right\}\tau^{1+\lambda},\;\;\;\forall \tau> 0,
\end{equation}
or in other terms that for every $\lambda$, \,$0<\lambda<1$,
\begin{equation}\label{6D}
0 \leq g(\tau)< C(\lambda)\,\tau^{1+\lambda},\;\;\;\forall \tau> 0,
\end{equation}
for some constant $C(\lambda)$, with (take $m=1$)
$$
0<C(\lambda)\leq \sup\left\{1\,,\,\dfrac{2^{1+\lambda}}{\lambda
e}\right\},
$$
which is nothing but (\ref{6.23bis}).
\par Since
 $$g_{\delta}(t)=\dfrac{1}{\delta}g(\delta|t|),\;\;\;\forall t\in \mathbb{R},$$
one deduces from (\ref{6D}) that $g_{\delta}$ satisfies

\begin{equation*}
\left\{\begin{array}{l} \displaystyle 0\leq g_{\delta}(t)\leq
{\delta}^{\lambda} C(\lambda)|t|^{1+\lambda}, \;\;\;\forall t\in
\mathbb{R},\;\;\;\forall \delta>0,
\\
\\
\displaystyle 0\leq g_{\delta}(t)< {\delta}^{\lambda}
C(\lambda)|t|^{1+\lambda},\;\;\;\forall
t\in\mathbb{R},\; t\neq0,\;\;\;\forall \delta>0,
\end{array}
\right.
\end{equation*}
which proves (\ref{aster}) and (\ref{6.4bis}) with a constant
$C(\lambda)$ which
satisfies (\ref{6.23bis}).
\qquad$\Box$
\end{proof}
%===========================================     Definition de delta zero et z delta zero   =================================
%
\subsection{Definition of $\delta_0$ and $Z_{\delta_{0}}$}\label{sub52}
The goal of this Subsection is to define the constants $\delta_0$
and $Z_{\delta_0}$ which appear in Theorem \ref{theo}. We will prove
the following result.
%
%======================================     Lemmme  6.2    ========================================
%
\begin{lemma}\label{lemma 2}
Assume that \rm{(\ref{2.2}), (\ref{2.3}), (\ref{2.6}), (\ref{2.5}),
(\ref{2.5bis}) and  (\ref{2.4})} hold true. Assume moreover that the two smallness conditions
 {\rm(\ref{A1})} \it and
{\rm{(\ref{A3})}} hold true.
\par Let $\delta_1$ be the number defined by
\begin{equation}\label{Ldelta1 bis}
 \delta_{1}=\dfrac{\alpha -C_{N}^2\|a_0\|_{N/2}}{C_{N}^{2}\|f\|_{N/2}}.
 \end{equation}
 %Note that $\delta_{1}>0$ since $\alpha-C_{N}^2\|a_0\|_{N/2}>0$ in view of {\rm(\ref{A1})}.
 One has
 \begin{equation}\label{6.18bis}
\delta_1 > \gamma.
\end{equation}
 \par For $\delta\geq0$, let $\Phi_{\delta}\,:\,\mathbb{R^{+}}\rightarrow\mathbb{R}$ (see Figure 2) be the function defined by
\begin{equation}\label{Phidelta bis}
 \Phi_{\delta} (X)=GC^{2+\theta}_{N}\|a_0\|_q X^{1+\theta}-(\alpha-C_{N}^2\|a_0\|_{N/2}-\delta C_{N}^{2}\|f\|_{N/2})X+\|f\|_{H^{-1}(\Omega)},
 \end{equation}
where $\theta$ is defined by {\rm({\ref{b})}} 
(note that $0 < \theta < 1$ in view of  {\rm({\ref{c})}})
and where $G$ is the constant defined by
\begin{equation}\label{G bis}%6 12
  G=\left(\dfrac{\alpha-C_{N}^{2}\|a_0\|_{N/2}}{C_{N}^{2}\|f\|_{N/2}}\right)^{\theta} C(\theta),
 \end{equation}
 with  $C_{N}$ the best constant in the Sobolev's inequality {\rm(\ref{2.7'})} and $C(\theta)$  the constant which appears in {\rm(\ref{aster})}(see also {\rm(\ref{6.23bis})}).
 \\
\indent Then, for $0\leq\delta\leq\delta_1$, the function
$\Phi_\delta$ has a unique minimizer $Z_\delta$ on
$\mathbb{R}^{+}$, which is given by
\begin{equation}\label{Zdelta bis}
 Z_\delta=\left(\frac{ \alpha-C_{N}^2\|a_0\|_{N/2}-\delta C_{N}^{2}\|f\|_{N/2} }{(1+\theta)GC^{2+\theta}_{N}\|a_{0}\|_{q}}\right
 )^{1/\theta}, \;\;\;for\;\;\; 0\leq\delta\leq\delta_1.
 \end{equation}
\indent Moreover, there exists a unique number $\delta_0$ such that
\begin{equation}\label{3777bis}
\gamma\leq\delta_0<\delta_1,
\end{equation}
and
\begin{equation}\label{Phi delta0 bis}
\Phi_{\delta_0}(Z_{\delta_0})=0.
\end{equation}
\\
\indent This number is the number $\delta_0$ which appear in Theorem
\ref{theo}, and $Z_{\delta_0}$ is then defined from $\delta_0$
through formula {\rm(\ref{Zdelta bis})}, namely by
\begin{equation}\label{6.15bis}
 Z_{\delta_0}=\left(\frac{ \alpha-C_{N}^2\|a_0\|_{N/2}-\delta_{0} C_{N}^{2}\|f\|_{N/2} }{(1+\theta)GC^{2+\theta}_{N}\|a_{0}\|_{q}}\right
 )^{1/\theta}.
 \end{equation}
 
%\indent Finally,  there exists a unique number $Y_{\delta}^{-}$ such that (see Figure 2)
%\begin{equation}\label{aq bis}
%0<Y_{\delta}^{-}<Z_{\delta_0},\qquad \forall\delta,\; 0<\delta<\delta_{0},
%\end{equation}
%for which
%\begin{equation}\label{fi}
%\left\{\begin{array}{l}
%         \Phi_\delta(X)>0,\;\; \;\;\;\;\forall X,\; 0\leq X< Y^{-}_{\delta} ,\\
%        \\
%         \Phi_\delta(Y^{-}_{\delta})=0,\hspace{5.3cm}\forall\delta,\;0<\delta<\delta_0,   \\
%         \\
 %        \Phi_\delta(X)<0,\;\; \;\;\;\;\forall X,\; Y^{-}_{\delta}< X \leq Z_\delta.
%       \end{array}
%\right.
%\end{equation}
\end{lemma}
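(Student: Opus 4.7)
The plan is to reduce everything to an elementary analysis of the one-parameter family of convex-like functions $\Phi_\delta$, and then to recognize the two smallness conditions (\ref{A1}) and (\ref{A3}) as exactly what is needed at the two endpoints of the relevant $\delta$-interval.

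First I would dispose of (\ref{6.18bis}): the inequality $\delta_1 > \gamma$ is a straightforward algebraic rewriting of $\alpha - C_N^2\|a_0\|_{N/2} - \gamma C_N^2\|f\|_{N/2} > 0$, which is precisely the smallness condition (\ref{A1}). In particular this also shows that for $0 \leq \delta \leq \delta_1$ the coefficient $b(\delta) := \alpha - C_N^2\|a_0\|_{N/2} - \delta C_N^2\|f\|_{N/2}$ is nonnegative and strictly decreasing in $\delta$, with $b(\delta_1) = 0$.

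Next I would study $\Phi_\delta(X) = a X^{1+\theta} - b(\delta) X + c$, where $a = G C_N^{2+\theta}\|a_0\|_q > 0$ and $c = \|f\|_{H^{-1}(\Omega)} > 0$. Since $0 < \theta < 1$ (by (\ref{c})) and $a > 0$, the function is strictly convex on $\mathbb{R}^+$ (its second derivative is $(1+\theta)\theta a X^{\theta-1} > 0$). For $0 \leq \delta < \delta_1$, setting $\Phi'_\delta(X) = 0$ gives the unique critical point
\begin{equation*}
Z_\delta = \left(\frac{b(\delta)}{(1+\theta)a}\right)^{1/\theta},
\end{equation*}
which is exactly (\ref{Zdelta bis}); this is the global minimum since $\Phi_\delta'' > 0$. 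A short computation, using $a Z_\delta^{1+\theta} = \frac{1}{1+\theta} b(\delta) Z_\delta$, yields the closed form
\begin{equation*}
\Phi_\delta(Z_\delta) = c - \frac{\theta}{1+\theta}\, b(\delta)\, Z_\delta = \|f\|_{H^{-1}(\Omega)} - \frac{\theta}{1+\theta}\,\frac{b(\delta)^{(1+\theta)/\theta}}{((1+\theta)a)^{1/\theta}}.
\end{equation*}

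With this formula in hand, the existence and uniqueness of $\delta_0$ follow easily. Since $b(\delta)$ is positive and strictly decreasing on $[\gamma,\delta_1)$ with $b(\delta_1) = 0$, the quantity $b(\delta)^{(1+\theta)/\theta}$ is continuous and strictly decreasing in $\delta$; hence $\delta \mapsto \Phi_\delta(Z_\delta)$ is continuous and strictly increasing on $[\gamma,\delta_1]$. At the right endpoint $\Phi_{\delta_1}(Z_{\delta_1}) = \|f\|_{H^{-1}(\Omega)} > 0$ since $f \neq 0$. At the left endpoint $\delta = \gamma$, substituting $b(\gamma) = \alpha - C_N^2\|a_0\|_{N/2} - \gamma C_N^2\|f\|_{N/2}$ into the closed form shows that the right-hand side of (\ref{A3}) is exactly the second term of $\Phi_\gamma(Z_\gamma)$, so that (\ref{A3}) is equivalent to $\Phi_\gamma(Z_\gamma) \leq 0$. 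The intermediate value theorem combined with strict monotonicity then gives a unique $\delta_0 \in [\gamma,\delta_1)$ with $\Phi_{\delta_0}(Z_{\delta_0}) = 0$, and $Z_{\delta_0}$ is defined by (\ref{6.15bis}). The main (and essentially only) point that requires care is the algebraic identification of the right-hand side of (\ref{A3}) with $\frac{\theta}{1+\theta} b(\gamma) Z_\gamma$; once this is noticed, the entire lemma reduces to the one-dimensional calculus just sketched.
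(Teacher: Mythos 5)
Your proposal is correct and follows essentially the same route as the paper's proof: you introduce $L_\delta = b(\delta)$, observe that (\ref{A1}) is precisely $\delta_1 > \gamma$, compute the minimizer $Z_\delta$ and the minimum value $\Phi_\delta(Z_\delta)$ in closed form, identify (\ref{A3}) with $\Phi_\gamma(Z_\gamma)\le 0$, and conclude by monotonicity and the intermediate value theorem. The only cosmetic difference is that you emphasize strict convexity and strict monotonicity explicitly, whereas the paper phrases the same facts in terms of the parabola-like shape of $\Phi_\delta$; the substance is identical.
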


%======================================     Remark  6.2 bis (devenue 6.3)    ========================================

\begin{remark}\label{rmq6.2bis}
{\rm
Let us explain the meaning of the results stated in Lemma \ref{lemma 2}. 
\par As we will see in the proof of Lemma \ref{lemma 2} (see also Figure 2), 
%for every $\delta \geq 0$ 
the function $\Phi_\delta$ is the restriction to $\mathbb{R}^{+}$ of a function which looks like a convex parabola.
This function attains its minimum at a unique point $Z_{\delta}$, and for $\delta$ which satisfies $\delta < \delta_1$ with $\delta_1$ given by (\ref{Ldelta1 bis}), one has $Z_{\delta} > 0$. 
\par The smallness condition (\ref{A1}) is equivalent to the fact that $\delta_1 > \gamma$, 
and the smallness condition (\ref{A3}) to the fact that the minimum $\Phi_\gamma (Z_{\gamma})$ of $\Phi_\gamma$ is nonpositive.
For $\delta = \delta_1$, the minimum $\Phi_{\delta_1} (Z_{\delta_1} )$ of $\Phi_{\delta_1}$ is equal to $\|f\|_{H^{-1}(\Omega)}$, which is strictly positive.
Therefore it can be proved that there exists some $\delta_0$ with 
$\gamma\leq\delta_0<\delta_1$ (see (\ref{3777bis}))
such that the minimum $\Phi_{\delta_0} (Z_{\delta_0})$ of $\Phi_{\delta_0}$ is equal to zero (see~(\ref{Phi delta0 bis})), or in other terms such that the function $\Phi_{\delta_0}$ has a double zero in $Z_{\delta_0}$.
Moreover, when $\gamma < \delta_0$, for every $\delta$ with $\gamma \leq \delta < \delta_0$,
the function $\Phi_{\delta}$ has two distinct zeros $Y_{\delta}^- $ and $Y_{\delta}^+$ with $Y_{\delta}^- < Y_{\delta}^+$ which satisfy 
$0<Y_{\delta}^-<Z_{\delta_0}<Y_{\delta}^+$ (see (\ref{6.25}) in Remark \ref{rmq6.4}).
\qquad $\Box$
} 
\end{remark}

%======================================     Remark  6.3   (devenue 6.4)   ========================================

\begin{remark}\label{rmq7}
{\rm
 In the present paper we use Lemma \ref{lemma32} with $\lambda =\theta$ defined by (\ref{b})
 (note that $0<\theta<1$ in view of (\ref{c})) and with $\delta_{\star}=\delta_{1}$  defined
 by (\ref{Ldelta1 bis}). Using the fact that $G$ defined by (\ref{G bis}) is nothing but $G=\delta_{1}^{\theta}C(\theta)$,
 inequalities (\ref{aster}) and (\ref{6.4bis}) imply that

\begin{equation}\label{6.17bis}
\left\{\begin{array}{l}
 0\leq g_{\delta}(t)\leq \delta_{1}^{\theta}
C(\theta)|t|^{1+\theta}=G|t|^{1+\theta},\;\;\; \forall t\in
\mathbb{R},\;\;\; \forall\delta,\;0<\delta\leq\delta_{1},
\\
\\
 0\leq g_{\delta}(t)< \delta_{1}^{\theta}
C(\theta)|t|^{1+\theta}=G|t|^{1+\theta},\;\;\; \forall t\in
\mathbb{R} ,\; t\neq0,\;\;\;
\forall\delta,\;0<\delta\leq\delta_{1}.
\end{array}
\right.
\end{equation}
In particular for $\delta=\delta_0$ defined by (\ref{3777bis}) and
(\ref{Phi delta0 bis}) one has
\begin{equation}\label{6.16}
0\leq g_{\delta_0}(t)\leq G|t|^{1+\theta},\;\;\; \forall t\in
\mathbb{R}.
\end{equation}
} 
\hspace{13cm}$\qquad \Box$
\end{remark}

%
%=====================================================   Courbe droite  ========================================
%
\begin{figure}[H]
\begin{center}
\includegraphics[width=7cm,height=4cm]{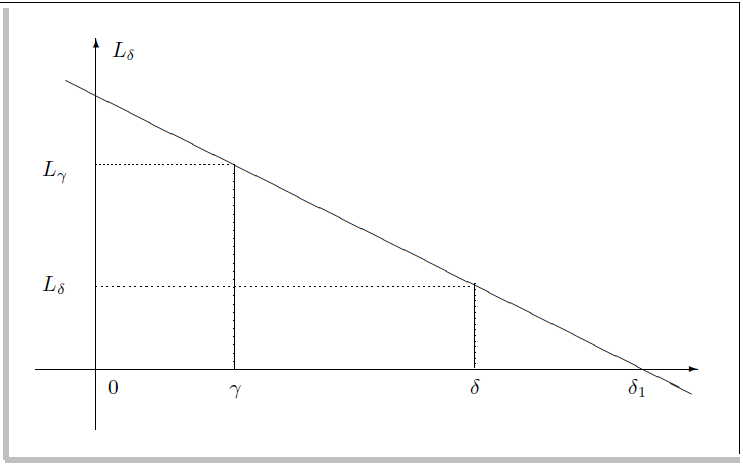}
\end{center}
\caption{The graph of the straight line $L_\delta$ }
\label{fig:dessin1}
\end{figure}

%==================================== courbe   2  =======================================
\begin{figure}[H]
\begin{center}
\includegraphics[width=13cm,height=9cm]{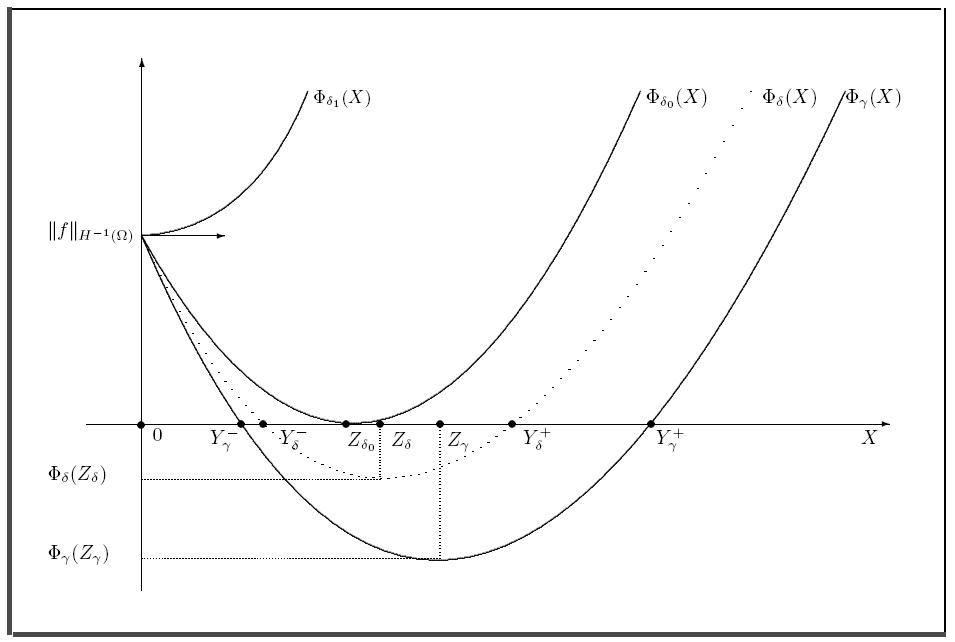}
\end{center}
\caption{The graphs of the functions $\Phi_{\delta}(X)$ for
$\delta=\gamma,\, \gamma<\delta<\delta_{0}$,\,$\delta=\delta_0$  and
$\delta=\delta_{1}$} \label{fig:dessin2}
\end{figure}

%==================================== Remark 6.4  (devenue 6.5)  =======================================

\vskip .5cm

%======================================     Proof of Lemma 6.2    ========================================

%\textbf{Proof}\;
\noindent {\bf {Proof of Lemma \ref{lemma 2}}}
For $\delta\geq 0$, let $L_{\delta}$ be the constant defined by (see
Figure 1)
%(see Figure \ref{{fig:dessin1}})
\begin{equation}\label{Ldelta bis}
 L_{\delta}=\alpha-C_{N}^2\|a_0\|_{N/2}-\delta C_{N}^{2}\|f\|_{N/2},
 \end{equation}
 where $C_{N}$ is the best constant in the Sobolev's inequality (\ref{2.7'}). Note that $L_{\delta}$ is decreasing with respect to $\delta$.

\par Since $\delta_1$ is defined by (\ref{Ldelta1 bis}), one has
 $L_{\delta_{1}}=0.$
On the other hand, the first smallness condition (\ref{A1}) is nothing but $L_{\gamma }>0$. Since $L_\delta$ is decreasing in $\delta$, one has $\delta_1 > \gamma$, i.e. 
(\ref{6.18bis}).
 \par
 Let us now study the family of functions $\Phi_{\delta}\,:\,\mathbb{R}^{+}\rightarrow\mathbb{R}$ defined by
(\ref{Phidelta bis}),  i.e., in view of the definition (\ref{Ldelta
bis}) of $L_{\delta}$, by
 \begin{equation}\label{Phidelta}
 \Phi_{\delta} (X)=GC^{2+\theta}_{N}\|a_0\|_q X^{1+\theta}-L_{\delta}X+\|f\|_{H^{-1}(\Omega)},\;\;\; \forall X\geq 0,
 \end{equation}
 (see Figure 2).

  \par Since $a_0\neq 0$ (see (\ref{2.5})), each function $\Phi_{\delta}$ looks like the restriction to $\mathbb{R}^+$ of a convex parabola. 
  %Observe that the function  $\Phi_{\delta}$ is a decreasing family of functions for $0\leq\delta\leq\delta_1$.
When $0\leq\delta\leq\delta_1$, one has $L_{\delta}\geq 0$, and this convex parabola has a unique
  minimizer $Z_\delta$ on~$\mathbb{R}^{+}$ which is also the minimizer of the function $\Phi_{\delta}$. A simple computation shows that $Z_{\delta}$ is given by
 \begin{equation}\label{Zdelta}
 Z_\delta=\left(\frac{L_{\delta} }{(1+\theta)GC^{2+\theta}_{N}\|a_{0}\|_{q}}\right )^{1/\theta}=\left(\frac{ \alpha-C_{N}^2\|a_0\|_{N/2}-\delta C_{N}^{2}\|f\|_{N/2} }{(1+\theta)GC^{2+\theta}_{N}\|a_{0}\|_{q}}\right )^{1/\theta},
 \end{equation}
 i.e   (\ref{Zdelta bis}), and that the minimum of  $\Phi_{\delta}$, namely $\Phi_{\delta}(Z_{\delta})$, is given by
 \begin{equation}\label{phiz}
 \left\{
 \begin{array}{l}
   \Phi_{\delta}(Z_{\delta})=\|f\|_{H^{-1}(\Omega)}-\dfrac{\theta}{1+\theta}\, \dfrac{L_{\delta}^{\;\,(1+\theta) /\theta}}{((1+\theta)GC_{N}^{2+\theta}\|a_{0}\|_{q})^{1/\theta}} =\\
   \\
  = \|f\|_{H^{-1}(\Omega)}-\dfrac{\theta}{1+\theta} \dfrac{(\alpha-C_{N}^2\|a_0\|_{N/2}-\delta C_{N}^{2}\|f\|_{N/2} )^{(1+\theta)/\theta}}{((1+\theta)GC^{2+\theta}_{N}\|a_{0}\|_{q} )^{1 /\theta}}.
 \end{array}
 \right.
 \end{equation}
 \\
\indent When $0\leq\delta\leq\delta_1$, the function $L_{\delta}$ is
nonnegative, continuous and decreasing with respect to~$\delta$.
Therefore $Z_{\delta}$ is continuous and decreasing with respect to
$\delta$, while $\Phi_{\delta}(Z_{\delta})$ is continuous and
increasing with respect to $\delta$.
\\
\indent When $\delta=\delta_1$, one has  $L_{\delta_{1}}=0$, the
function $\Phi_{\delta_1}$ attains its minimum in $Z_{\delta_{1}}=0$, 
and $\Phi_{\delta_1}(Z_{\delta_1})=\break
=\|f\|_{H^{-1}(\Omega)}>0$, 
while the second  smallness condition (\ref{A3}) is nothing
but $\Phi_{\gamma}(Z_{\gamma})\leq0$. Therefore there exists  a
unique $\delta_0$ with $\gamma\leq\delta_0<\delta_1$ such that
$\Phi_{\delta_0}(Z_{\delta_0})=0$. This is the definition of
$\delta_0$ given by (\ref{3777bis}) and  (\ref{Phi delta0 bis})
 in Lemma \ref{lemma 2}.
\\
\indent Lemma \ref{lemma 2} is proved.
\qquad $\Box$

\begin{remark}\label{rmq6.4}
{\rm The case where equality takes places in inequality  (\ref{A3})
corresponds to the case where $\delta_0=\gamma$.\\
\indent On the other hand, 
when (\ref{A3}) is a strict inequality, 
one has  $\gamma < \delta_0$,
and for $\delta$ with
$\gamma \leq \delta < \delta_0$, the function $\Phi_\delta$ has two distinct
zeros $Y_{\delta}^-$ and $Y_{\delta}^+$ with
$0<Y_{\delta}^-<Y_{\delta}^+$. Since 
%$$
%\Phi_{\delta} (X)=GC^{2+\theta}_{N}\|a_0\|_q X^{1+\theta}-(\alpha-C_{N}^2\|a_0\|_{N/2}-\delta C_{N}^{2}\|f\|_{N/2})X+\|f\|_{H^{-1}(\Omega)} = 
%\Phi_{0} (X) + \delta C_{N}^{2}\|f\|_{N/2} X,
%$$
\begin{equation*}
 \left\{
 \begin{array}{l}
\Phi_{\delta} (X)=GC^{2+\theta}_{N}\|a_0\|_q X^{1+\theta}-(\alpha-C_{N}^2\|a_0\|_{N/2}-\delta C_{N}^{2}\|f\|_{N/2})X+\|f\|_{H^{-1}(\Omega)} = 
\\
\\
= \Phi_{0} (X) + \delta C_{N}^{2}\|f\|_{N/2} X, \end{array}
 \right.
 \end{equation*}
the family of functions
$\Phi_\delta$ is an increasing family of functions on $\mathbb{R^{+}}$, and one has
\begin{equation}\label{6.25}
0<Y_{\delta}^-<Z_{\delta_0}<Y_{\delta}^+\;\;\; {\rm if}\;\;\;
\gamma \leq \delta < \delta_0.
\end{equation}
\hspace{13.9cm} \qquad $\Box$
}
\end{remark}

\vskip 1cm

\section*{Acknowledgments}
This work has been done during visits of the first author to the
Laboratoire Jacques-Louis\break Lions, Universit\'{e} Pierre et Marie
Curie (Paris VI) et CNRS, Paris, 
and of the second author to the\break D\'{e}partement de Math\'{e}matiques de l'Ecole Normale
Sup\'{e}rieure, Vieux Kouba, Alger.
Both authors wish to thank the Projet Tassili 08
MDU 736, the D\'{e}partement de Math\'{e}matiques de l'Ecole Normale
Sup\'{e}rieure and the Laboratoire
Jacques-Louis Lions whose supports made possible these visits and this work.

\vskip 2cm

%============================================= ===========================================================
%=====================================  References    ===================================================

\end{document}